\newcommand{\tsum}{\textstyle\sum}
\newcommand{\bbr}{\Bbb{R}}
\newcommand{\beq}{\begin{equation}}
\newcommand{\eeq}{\end{equation}}
\newcommand{\beqa}{\begin{eqnarray}}
\newcommand{\eeqa}{\end{eqnarray}}
\newcommand{\beqas}{\begin{eqnarray*}}
\newcommand{\eeqas}{\end{eqnarray*}}
\def\vgap{\vspace*{.1in}}
\newcommand{\nn}{\nonumber}
\title{Auto-conditioned
primal-dual hybrid gradient method and alternating direction method of multipliers \thanks{
This research was partially supported by Air Force Office of Scientific Research grant FA9550-22-1-0447.
Coauthors of this paper are listed according to the alphabetic order.
}}
\author{
Guanghui Lan
	\thanks{H. Milton Stewart School of Industrial and Systems Engineering, Georgia Institute of Technology, Atlanta, GA, 30332.
		(email: {\tt george.lan@isye.gatech.edu}).}
	\and
Tianjiao Li
	\thanks{H. Milton Stewart School of Industrial and Systems Engineering, Georgia Institute of Technology, Atlanta, GA, 30332.
		(email: {\tt tli432@gatech.edu}).}
}
\date{\today}
\begin{document} 

\maketitle

\begin{abstract}
Line search procedures are often employed in primal-dual methods for bilinear saddle point problems, 
especially when the norm of the linear operator is large or difficult to compute. In this paper, we demonstrate that line search is unnecessary by introducing a novel primal-dual method, the auto-conditioned primal-dual hybrid gradient (AC-PDHG) method, which achieves optimal complexity for solving bilinear saddle point problems.
AC-PDHG is fully adaptive to the linear operator, using only past iterates to estimate its norm. We further tailor AC-PDHG to solve linearly constrained problems, providing convergence guarantees for both the optimality gap and constraint violation. Moreover, we explore an important class of linearly constrained problems where both the objective and constraints decompose into two parts. By incorporating the design principles of AC-PDHG into the preconditioned alternating direction method of multipliers (ADMM), 
we propose the auto-conditioned alternating direction method of multipliers (AC-ADMM), which guarantees convergence based solely on one part of the constraint matrix and fully adapts to it, eliminating the need for line search. Finally, we extend both AC-PDHG and AC-ADMM to solve bilinear problems with an additional smooth term. By integrating these methods with a novel acceleration scheme, we attain optimal iteration complexities under the single-oracle setting.
 \end{abstract}

\section{Introduction}
The basic problem of interest in this paper is the bilinear saddle point problem:
\begin{align}\label{eq:bilinear_saddle_point}
\min_{x \in X} \max_{y \in Y} f(x) + \langle Ax, y \rangle - g(y),
\end{align}
where $X \subseteq \bbr^n$ and $Y\subseteq \bbr^m$ are closed convex sets, $A \in \bbr^{m \times n}$ denotes a linear mapping from $\bbr^n$ to $\bbr^m$, and $f: X \rightarrow \bbr$ and $g: Y \rightarrow \bbr$ are proper, lower semicontinuous, and convex functions. 
We assume there exists a pair of saddle point $z^*:=(x^*, y^*) \in Z:= X \times Y$ for problem \eqref{eq:bilinear_saddle_point}. In particular, when $Y= \bbr^m$ and $g^*$ is the Fenchel conjugate of $g$, problem \eqref{eq:bilinear_saddle_point} can be equivalently written as 
\begin{align}\label{eq:bilinear_saddle_point_equiv}
	\min_{x \in X} f(x) + g^*(Ax).
\end{align}
Another important class of problems that can be solved through \eqref{eq:bilinear_saddle_point} is the linearly constrained problem: 
\begin{align}\label{linear_constrained_problem}
\min_{x \in X}\left\{ f(x): Ax = b\right\},
\end{align}
which can be reduced 
to \eqref{eq:bilinear_saddle_point}
by using Lagrange duality.
We also consider a special case of the above linearly constrained problem:
\begin{align}\label{linear_two_parts_0}
    \min_{x \in X, w \in W}\left\{ F(x) + G(y): Bw - Kx = b\right\},
\end{align}
where $X \subseteq \bbr^{n_1}$ and $W \subseteq \bbr^{n_2}$ are closed convex sets, and $B \in \bbr^{m\times n_1}$ and $K\in \bbr^{m\times n_2}$ are linear operators. Clearly, when $B = I$ and $b = 0$, this problem is also equivalent to the composite problem \eqref{eq:bilinear_saddle_point_equiv}. 

Problems \eqref{eq:bilinear_saddle_point}-\eqref{linear_two_parts_0} have a broad range of applications in the field of signal and image processing, machine learning, and statistics; see., e.g., \cite{rudin1992nonlinear, tibshirani2005sparsity, jacob2009group, kolda2009tensor, yang2011alternating, tomioka2011statistical, bouwmans2016handbook} and references therein. Driven by these significant real-world applications, there has been considerable interest in the development of computationally efficient methods for bilinear saddle point and linearly constrained problems. 

\subsection{Smoothing scheme and primal-dual methods for \eqref{eq:bilinear_saddle_point}}
In a seminar work \cite{nesterov2005smooth}, Nesterov introduced a smoothing scheme by subtracting a strongly convex regularizer $\mu_d h(y)$ in \eqref{eq:bilinear_saddle_point}, leading to the formulation 
\begin{align}\label{eq:bilinear_saddle_point_smoothened}
\min_{x \in X} f(x) + H_{\mu_d}(x), \text{  where  } H_{\mu_d}(x):=\max_{y \in Y}  \langle Ax, y \rangle - g(y) - \mu_d h(y).
\end{align}
Then $H_{\mu_d}(x)$ becomes a smooth function with $\tfrac{\|A\|^2}{\mu_d}$-Lipschitz continuous gradient. Assuming $f$ is relatively simple, by choosing an appropriate $\mu_d$ and applying a (composite) accelerated gradient method \cite{nesterov1983method}, the overall iteration complexity can be bounded by
\begin{align}\label{optimal_complexity}
\mathcal{O}\left(\tfrac{\|A\|D_X D_Y}{\epsilon} \right),
\end{align}
 where $D_X$ and $D_Y$ are the diameters of the compact sets $X$ and $Y$. This complexity bound is optimal, as supported by lower bounds in \cite{nemirovsky1992information, nemirovski2004prox}. Nesterov's smoothing scheme has been extensively studied in the literature (e.g., \cite{nesterov2005excessive, auslender2006interior, lan2011primal, pena2008nash, tseng2008accelerated, lan2013bundle, lu2023unified}). 

In contrast to solving \eqref{eq:bilinear_saddle_point} by minimizing the smooth approximation function, primal-dual methods work directly with the saddle-point formulation. In their celebrated work \cite{chambolle2011first}, Chambolle and Pock introduced the primal-dual algorithm (PDA), also known as the primal-dual hybrid gradient (PDHG) method, which achieves the same optimal complexity \eqref{optimal_complexity} but with a stronger criterion based on the primal-dual gap function. When applied to the linearly constrained problem \eqref{linear_constrained_problem}, this method can provide guarantees on both the optimality gap and constraint violation. Further developments of PDHG and its variants can be found in \cite{pock2011diagonal, he2012on, chen2014optimal, liu2021acceleration}. Notably, Chen et al. \cite{chen2014optimal} considered the case when $f$ is smooth and introduced an accelerated variant of PDHG with optimal iteration complexity. 

Additionally, problem \eqref{eq:bilinear_saddle_point} can also be tackled using algorithms for monotone variational inequalities (VIs), e.g., \cite{korpelevich1983extrapolation, nemirovski2004prox, monteiro2010complexity, malitsky2020golden, kotsalis2022simple1, kotsalis2022simple}. Similar to the primal-dual methods mentioned above, VI algorithms, when applied to \eqref{eq:bilinear_saddle_point}, also perform updates in both the primal and dual spaces at each iteration. However, they can at most achieve the complexity of $\mathcal{O}\left(\tfrac{\|A\|(D_X^2 + D_Y^2)}{\epsilon} \right)$, which has a worse dependence on the diameters compared to that for the optimal bound \eqref{optimal_complexity}. 

\subsection{Alternating direction method of multipliers (ADMM) for \eqref{linear_two_parts_0}}
Now we switch our attention to problem \eqref{linear_two_parts_0}, which, by the method of Lagrangian multipliers, can be reformulated as the following saddle-point problem
\begin{align}\label{bilinear_two_lagrangian}
\min_{x \in X, w\in W} \max_{y \in \bbr^m} F(x) + G(w) + \langle K x - Bw +b, y\rangle.
\end{align}
This problem can also be solved using primal-dual methods like PDHG, but their convergence rate will depend on $\|[K, -B]\|$. Alternatively, one can further penalize the constraints by considering the augmented Lagrangian formulation
\begin{align}\label{bilinear_two_augmented_lagrangian}
\min_{x \in X, w\in W} \max_{y \in \bbr^m} F(x) + G(w) + \langle K x - Bw +b, y\rangle + \tfrac{\rho}{2}\|Kx - Bw +b\|^2,
\end{align}
where $\rho$ is a penalty parameter. The idea of this augmented Lagrangian method (ALM) originated from Hestenes \cite{hestenes1969multiplier} and Powell \cite{powell1969method}, and is also known as the method of multipliers; see textbooks \cite{bertsekas1982constrained, nocedal2006numerical}. One influential variant of ALM is the alternating direction method of multipliers (ADMM) \cite{gabay1976dual, glowinski1975approximation}, which updates $x_t$ and $w_t$ alternatively, followed by an update of the Lagrangian multiplier $y_t$ in each iteration,
\begin{align}
x_t &=  \arg \min_{x \in X} F(x) + \langle Kx - Bw_{t-1} + b, y_{t-1}\rangle + \tfrac{\rho}{2}\|Kx - Bw_{t-1} +b\|^2, \label{ADMM_x}\\
w_t &=  \arg \min_{w \in W} G(w) + \langle Kx_t - Bw + b, y_{t-1}\rangle + \tfrac{\rho}{2}\|Kx_2 - Bw +b\|^2, \label{ADMM_w}\\
y_t &=  \arg \min_{w \in W} y_{t-1} - \rho(B w_t - K x_t -b).\label{ADMM_y}
\end{align}
Clearly, ADMM involves more complicated subproblems in steps \eqref{ADMM_x} and \eqref{ADMM_w} compared to primal-dual methods like PDHG. However, several interesting studies \cite{he2012on, monteiro2013iteration, ouyang2013stochastic, LanBook2020}  have shown that the convergence rate of ADMM depends only on one part of the constraint mapping, either $B$ or $K$. 

Meanwhile, one can further reduce the per-iteration cost of ADMM by linearizing the term $\|Kx - Bw_{t-1} +b\|^2$ in step \eqref{ADMM_x}, resulting in the following update:
\begin{align}
x_t &=  \arg \min_{x \in X} F(x) + \langle Kx - Bw_{t-1} + b, y_{t-1}\rangle + \rho\langle K x_{t-1} - B w_{t-1} + b, K x \rangle + \tfrac{\eta}{2}\|x - x_{t-1}\|^2. \label{ADMM_x_preconditioned}
\end{align}
This variant is known as preconditioned ADMM (P-ADMM), and reduces to the primal-dual method \cite{chambolle2011first} when $B=I$. Notably, even when $B \neq I$, the convergence rate of P-ADMM depends only on one part of the constraint, $K$ (see, e.g., \cite{he2012on, ouyang2015accelerated}), but the stepsize parameter $\eta$ needs to be chosen based on $\|K\|$. Additionally, when $F$ is not a simple function, one may also linearize it in the subproblem \eqref{ADMM_x}, leading to the linearized ADMM (L-ADMM) \cite{ye2011computational, ye2011fast, chen2012fast, ouyang2015accelerated}. 

\subsection{Adaptive methods for bilinear saddle point problems}
It is important to note that, when implementing all these first-order methods mentioned above, we often need to estimate many problem parameters. Let us start with the primal-dual method \cite{chambolle2011first} for solving \eqref{eq:bilinear_saddle_point}.  in order to achieve the optimal complexity \eqref{optimal_complexity}, one needs to determine the stepsizes of the algorithm based on the problem parameters $\|A\|$, $D_X$, and $D_Y$. These constants can be unavailable or difficult to compute. For example, it is time-consuming to estimate $\|A\|$ when $A$ is a large-scale dense matrix. Additionally, even when the norm $\|A\|$ is computable, relying on this global constant can lead to overly conservative stepsize choices, and consequently slow down the convergence of the algorithm. Similar issues arise when implementing the P-ADMM or L-ADMM type algorithms for solving problem \eqref{linear_two_parts_0}, especially when $\|K\|$ is hard to estimate. 

To alleviate these concerns, there has been growing interest in the development of adaptive methods for bilinear saddle-point problems. 
One notable research direction is to incorporate line search strategies into primal-dual and ADMM algorithms. 
Specifically, Ouyang et al. \cite{ouyang2015accelerated} introduced line search strategies in P-ADMM and L-ADMM for solving \eqref{linear_two_parts_0}.
Malitsky and Pock \cite{malitsky2018first} proposed a line search for the primal-dual method \cite{chambolle2011first} and established the convergence guarantees in terms of the primal-dual gap of \eqref{eq:bilinear_saddle_point}. However, their complexity $\mathcal{O}(\tfrac{\|A\|(D_X^2 + D_Y^2)}{\epsilon})$ exhibits sub-optimal dependence on $D_X$ and $D_Y$. Meanwhile, although the line search procedure only involves matrix-vector multiplications, it can still be costly in practical implementations. In an effort to reduce the per-iteration cost, Malitsky \cite{malitsky2020golden} proposed the line search free golden ratio method for monotone variational inequalities. However, when applied to problem \eqref{eq:bilinear_saddle_point}, the method still suffers from sub-optimal dependence on $D_X$ and $D_Y$. Moreover, algorithms designed for VIs are generally less efficient than primal-dual methods when applied to bilinear saddle point problems. By incorporating the design ideas from both the golden ratio method \cite{malitsky2020golden} and the primal-dual algorithm~\cite{chambolle2011first}, Chang et al. \cite{chang2022golden} proposed the golden ratio primal-dual algorithm (GRPDA) to solve a linearly constrained problem equivalent to \eqref{eq:bilinear_saddle_point_equiv} with guarantees measured by optimality gap and constraint violation. 
However, GRPDA still requires a line search subroutine in each iteration. 
Vladarean et al. \cite{vladarean2021first} studied problem \eqref{eq:bilinear_saddle_point} in the setting where $f$ is (locally) smooth and proposed a line search free stepsize rule. However, the algorithm is not adaptive to $\|A\|$ and the rate remains sub-optimal with respect to both the Lipschitz constant of $\nabla f$ and the dependence on $D_X$ and $D_Y$.

In a recent work \cite{li2023simple}, we proposed an auto-conditioned fast gradient method (AC-FGM) and showed that it is uniformly optimal for smooth, weakly smooth, and nonsmooth convex optimization problems. The term ``auto-conditioned'' refers to the property that the method can automatically adapt to the problem parameters, such as the Lipschitz constant $L$, without the employment of line search procedures. This development, along with the existing literature on bilinear saddle point problems, motivates us to seek answers for the following questions:
\begin{center}
\emph{Is there an optimal, ``auto-conditioned'' primal-dual method for solving \eqref{eq:bilinear_saddle_point} that can adapt to $\|A\|$ without resorting to line search? And what about an ``auto-conditioned'' P-ADMM type algorithm for solving \eqref{linear_two_parts_0}?
}
\end{center}

\subsection{Contributions and organization}
We make three distinct contributions toward answering the above questions.

    Firstly, we introduce a novel primal-dual algorithm, called \emph{auto-conditioned primal-dual hybrid gradient method (AC-PDHG)} for solving the bilinear saddle point problem \eqref{eq:bilinear_saddle_point}. The major novelty of AC-PDHG lies in two aspects. First, to achieve the optimal rate \eqref{optimal_complexity}, AC-PDHG incorporates the idea of Nesterov's smoothing scheme, but still maintains a primal-dual update structure. Second, for the updates of the primal variable $x_t$, AC-PDHG maintains an additional intertwined sequence of search points $\{\bar x_t\}$ to serve as the ``prox-centers''. This design idea is inspired by the golden ratio method for VIs \cite{malitsky2020golden} and the auto-conditioned fast gradient method for convex optimization \cite{li2023simple}. Combining these two design ideas, AC-PDHG achieves the optimal rate \eqref{optimal_complexity} for the primal-dual gap function, with a line search free stepsize policy fully relies on the local estimates of $\|A\|$. The only input parameters required by AC-PDHG are the diameter of the dual feasible region $D_Y$ and the desired accuracy $\epsilon$. Additionally, we extend AC-PDHG to solve the linearly constrained problem \eqref{linear_constrained_problem}, establishing optimal $\mathcal{O}(\epsilon^{-1})$ complexity bounds for attaining a solution with $\epsilon$-optimality gap and $\epsilon$-constrained violation. Notably, when the primal feasible region is bounded and the diameter $D_X$ is known, the method can be implemented without the estimation of $\|y^*\|$.
    
    Secondly, we propose a novel P-ADMM type algorithm, called \emph{auto-conditioned ADMM (AC-ADMM)} for solving the linearly constrained problem \eqref{linear_two_parts_0}. Similar to AC-PDHG, AC-ADMM incorporates the idea from the smoothing scheme while maintaining alternative updates of the three variables $x_t$, $w_t$, and $y_t$ in each iteration. Specifically, the update rule for the primal variable $x_t$ is similar to \eqref{ADMM_x_preconditioned}, but with the prox-center $x_t$ replaced by $\bar x_t$, following the same design principle as AC-PDHG. The updates for the other primal variable $w_t$ involve the solution of a more complicated subproblem, like \eqref{ADMM_w}. However, at the price of the more complicated subproblem, AC-ADMM guarantees convergence in terms of the optimality gap and constraint violation, with dependence only on the matrix $K$, not $B$. Meanwhile, similar to AC-PDHG, AC-ADMM features a line search free stepsize policy that is fully adaptive to local estimates of $\|K\|$. 
    
    Thirdly, we extend AC-PDHG (and AC-ADMM) to the settings when $f$ (and $F$) are not prox-friendly but $L_f$-smooth (and $L_F$-smooth) functions.
    Specifically, we integrate AC-PDHG with the optimal, line search free AC-FGM algorithm \cite{li2023simple} proposed for smooth convex optimization. A key feature here is instead of linearizing the function $f$ at the search point $x_t$, we linearize it at another intertwined sequence $\widetilde x_t$. Our approach achieves the same complexity bound as \cite{chen2014optimal}:
    \begin{align*}
    \mathcal{O}\left(\sqrt{\tfrac{L_f D_X^2}{\epsilon}} +  \tfrac{\|A\|D_X D_Y}{\epsilon} \right),
    \end{align*}
    which is optimal in the single-oracle setting as supported by the lower bound in \cite{ouyang2021lower}.
    Here single-oracle model means for each search $(x,y)$, the first-order oracle returns $(\nabla f(x), Ax, A^T y)$.
    Meanwhile, the stepsize policy remains line search free and relies only on the local estimates of $L_f$ and $\|A\|$. We extend AC-ADMM to solve \eqref{linear_two_parts_0} with a smooth $F$ in a similar manner. 

The remaining sections of the paper are organized as follows. In Section \ref{sec:PDHG}, we introduce AC-PDHG for solving the bilinear saddle point problem \eqref{eq:bilinear_saddle_point} and the linearly constrained problem \eqref{linear_constrained_problem}. In Section \ref{sec:ADMM}, we propose AC-ADMM for solving problem \eqref{linear_two_parts_0}. Finally, in Section \ref{section:apdhg_aadmm}, we extend AC-PDHG and AC-ADMM to solve problems where the functions $f$ and $F$ are smooth. 

\subsection{Notation}
In this paper, we use the convention that $\tfrac{0}{0} = 0$, and $\tfrac{a}{0} = + \infty$ if $a > 0$. Unless stated otherwise, we let $\langle \cdot, \cdot\rangle$ denote the Euclidean inner product and $\|\cdot\|$ denote the corresponding Euclidean norm ($\ell_2$-norm). Given a vector $x \in \bbr^n$, we denote its $i$-th entry by $x^{(i)}$. Given a matrix $A$, we denote its $(i,j)$-th entry by $A_{i,j}$. We use $\|A\|$ to denote the spectral norm of matrix $A$. 

\section{An adaptive primal-dual hybrid gradient (PDHG) method}\label{sec:PDHG}
In this section, we consider solving problem~\eqref{eq:bilinear_saddle_point} in the simple but important setting, where both functions $f$ and $g$ are ``prox-friendly''. This means that the proximal-mapping problems 
\begin{align}
&\arg\min_{x \in X} \big\{\langle \xi, x\rangle + f(x) + \tfrac{\eta}{2}\|x' - x\|\big\}, \text{ where } \eta>0,~\xi\in \bbr^n, ~x' \in X, \label{eq:prox_x}\\
&\arg\min_{y \in Y} \big\{\langle \xi', y\rangle + g(y) + \tfrac{\eta'}{2}\|y' - y\|\big\}, \text{ where } \eta'>0,~\xi'\in \bbr^m, ~y' \in Y, \label{eq:prox_y}
\end{align}
can be easily solved, either in closed form or through some efficient computational procedures. For convenience, we denote $Z := X \times Y$. For two primal-dual pairs $z:= (x,y), \bar z:= (\bar x, \bar y) \in Z$, we define the primal-dual gap function as
\begin{align}\label{def_Q_mu}
Q(\bar z, z):= f(\bar x) + \langle A \bar x, y\rangle - g(y) - \left[f( x) + \langle A x, \bar y\rangle - g(\bar y) \right].
\end{align}
Note that by definition $z^* = (x^*, y^*)\in Z$ is a saddle point of Ineq.~\eqref{eq:bilinear_saddle_point} if and only if for any $z \in Z$, 
\begin{align*}
f(x^*) + \langle Ax^*, y \rangle - g(y) \leq f(x^*) + \langle Ax^*, y^* \rangle - g(y^*) \leq f(x) + \langle Ax, y^* \rangle - g(y^*).
\end{align*}
It then follows that $z^*$ is a saddle point of Ineq.~\eqref{eq:bilinear_saddle_point} if and only if $Q(z^*, z) \leq 0$ for any $z \in Z$. Consequently, $\max_{z \in Z}Q(\bar z, z)$ is a valid stopping criterion for the saddle point problem \eqref{eq:bilinear_saddle_point} when $Z$ is a bounded set.

Next, we introduce the Auto-Conditioned Primal-Dual Hybrid Gradient Method (AC-PDHG). AC-PDHG involves both primal and dual updates in each iteration. In the primal steps, two sequences of search points are maintained, i.e., $\{x_t\}$, and $\{\bar x_t\}$. The sequence $\{x_t\}$ is the output of the prox-mapping subproblem \eqref{primal_prox-mapping}, while the sequence $\{\bar x_t\}$ is a moving average of $\{x_t\}$ and serves as the prox-centers. This idea was first introduced in the golden ratio method \cite{malitsky2020golden} for variational inequalities, and later adopted in the auto-conditioned fast gradient method (AC-FGM) \cite{li2023simple} for convex optimization. For the dual variables, we update $\{y_t\}$ through another prox-mapping step \eqref{dual_prox-mapping}, which includes an additional regularization term $\tfrac{\mu_d}{2}\|\widetilde y_0 - y\|^2$. These new features in both primal and dual updates distinguish the AC-PDHG method from the classical PDHG method \cite{chambolle2011first}, and play a critical role in  achieving the optimal rate for solving the bilinear saddle point problem \eqref{eq:bilinear_saddle_point} without resorting to line search.
\begin{algorithm}[H]\caption{Auto-Conditioned Primal-Dual Hybrid Gradient Method (AC-PDHG)}\label{alg:ac_primal_dual}
	\begin{algorithmic}
		\State{\textbf{Input}: initial point $x_0  = \bar x_0 \in X$, $\widetilde y_0 \in Y$, nonnegative parameters $\beta_t \in (0, 1)$, $\eta_t \in \bbr_+$, $\tau_t \in \bbr_+$, and $\mu_d > 0$. 
  Let 
  \begin{align}
  y_0 = \arg \min_{y \in Y} \left\{\langle - A x_0, y \rangle + g(y) + \tfrac{\mu_d}{2}\|\widetilde y_0 - y\|^2  \right\}.\label{prox_update_0}
  \end{align}}
		\For{$t=1,\cdots, k$}
		\State{
  \begin{align}
  x_t &= \arg \min_{x \in X} \left\{\eta_t[\langle A^\top y_{t-1}, x \rangle +f(x)] + \tfrac{1}{2}\|\bar x_{t-1} - x\|^2\right\}, \label{primal_prox-mapping}\\
  \bar x_t &= (1-\beta_t) \bar x_{t-1} + \beta_t x_t, \label{primal_prox-center}\\
  y_t & = \arg \min_{y \in Y} \left\{\langle -Ax_t, y\rangle + g(y)+ \tfrac{\mu_d}{2}\|\widetilde y_0 - y\|^2  + \tfrac{\tau_t}{2}\|y_{t-1} - y\|^2\right\}. \label{dual_prox-mapping}
  \end{align}
  }
		\EndFor
	\end{algorithmic}
\end{algorithm}


Our goal is to ensure that AC-PDHG can be implemented without the computation or estimation of the global parameter $\|A\|$. Instead, we compute a local estimate of $\|A\|$ at each iteration, defined as
\begin{align}
L_{A, t} := \tfrac{\|A^\top (y_t - y_{t-1})\|}{\|y_t - y_{t-1}\|}.\label{def_L_A_t}
\end{align}
Then we use the local estimates obtained in the past iterations to determine the algorithm parameters, e.g., $\eta_t$, and $\tau_t$. 
In the next two subsections, we propose the stepsize policy for AC-PDHG and establish its convergence guarantees for solving the bilinear saddle point problem and the linearly constrained problem stated in \eqref{eq:bilinear_saddle_point} and \eqref{linear_constrained_problem} respectively.



\subsection{AC-PDHG for bilinear saddle point problems}\label{subsec:ac-pdhg}
To establish the convergence guarantees for Algorithm~\ref{alg:ac_primal_dual}, we first present the following lemma which characterizes the optimality condition of step \eqref{dual_prox-mapping}, and an important relationship between the primal and dual variables in the first iteration. 
\begin{lemma}\label{dual_three-point_lemma}
Let $\{x_t\}, \{\bar x_t\}$ and $\{y_t\}$ be generated by Algorithm~\ref{alg:ac_primal_dual}, we have for $t \geq 1$,
\begin{align}
&\langle - A x_{t}, y_{t} - y\rangle + \tfrac{\mu_d}{2}\|\widetilde y_0 - y_{t}\|^2 - \tfrac{\mu_d}{2}\|\widetilde y_0 - y\|^2 + g(y_{t}) - g(y) + \tfrac{\mu_d + \tau_{t}}{2}\|y_{t} - y\|^2\nn\\
& \leq \tfrac{\tau_t}{2} \|y_{t-1} - y\|^2 - \tfrac{\tau_t}{2} \|y_{t-1} - y_t\|^2. \label{eq_dual_three-point_lemma}
\end{align}
Moreover, if $\tau_1 = 0$, we have
\begin{align}\label{eq_bound_y_1_y_0}
\|y_1 - y_0\| \leq \tfrac{L_{A, 1}}{\mu_d}\|x_1 - x_0\|.
\end{align}
\end{lemma}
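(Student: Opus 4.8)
The plan is to prove the two claims separately, using the optimality (three-point) characterization of the proximal subproblems. For inequality \eqref{eq_dual_three-point_lemma}, I would apply the standard three-point lemma for the prox-mapping \eqref{dual_prox-mapping}: the subproblem minimizes $\langle -Ax_t, y\rangle + g(y) + \tfrac{\mu_d}{2}\|\widetilde y_0 - y\|^2 + \tfrac{\tau_t}{2}\|y_{t-1}-y\|^2$ over $y \in Y$, and this objective is $(\mu_d + \tau_t)$-strongly convex. Hence for any $y \in Y$, comparing the value at $y_t$ with the value at $y$, one gets that the linearized objective at $y_t$ plus the strong-convexity term $\tfrac{\mu_d+\tau_t}{2}\|y_t - y\|^2$ is bounded by the corresponding quantity at $y$, which after expanding the quadratic terms $\tfrac{\tau_t}{2}\|y_{t-1}-y\|^2 - \tfrac{\tau_t}{2}\|y_{t-1}-y_t\|^2$ appear on the right. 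Writing out the first-order optimality condition $\langle -Ax_t + g'(y_t) + \mu_d(y_t - \widetilde y_0) + \tau_t(y_t - y_{t-1}), y - y_t\rangle \geq 0$ and using convexity of $g$ and the identity $\langle \mu_d(y_t - \widetilde y_0), y - y_t\rangle = \tfrac{\mu_d}{2}(\|\widetilde y_0 - y\|^2 - \|\widetilde y_0 - y_t\|^2 - \|y_t - y\|^2)$ (and similarly for the $\tau_t$ term) gives \eqref{eq_dual_three-point_lemma} directly.

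For \eqref{eq_bound_y_1_y_0}, I would exploit the structure of the first two dual updates. Both $y_0$ (from \eqref{prox_update_0}) and $y_1$ (from \eqref{dual_prox-mapping} with $\tau_1 = 0$) solve prox-problems of the same form over $Y$ but with linear terms $-Ax_0$ and $-Ax_1$ respectively and the same regularizer $\tfrac{\mu_d}{2}\|\widetilde y_0 - y\|^2$; in particular $y_0$ is the prox of $\widetilde y_0$ with respect to $g(\cdot) + \langle -Ax_0,\cdot\rangle$ at scale $\mu_d$, and $y_1$ the analogous prox with $x_0$ replaced by $x_1$. The first-order conditions read $\langle -Ax_0 + \mu_d(y_0 - \widetilde y_0), y - y_0\rangle + g(y) - g(y_0) \geq 0$ and $\langle -Ax_1 + \mu_d(y_1 - \widetilde y_0), y - y_1\rangle + g(y) - g(y_1) \geq 0$ for all $y \in Y$. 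Setting $y = y_1$ in the first and $y = y_0$ in the second and adding, the $g$-terms cancel and I obtain $\langle -A(x_0 - x_1), y_1 - y_0\rangle + \mu_d \langle (y_0 - y_1), y_1 - y_0\rangle \geq 0$, i.e. $\mu_d \|y_1 - y_0\|^2 \leq \langle A^\top(y_1 - y_0), x_1 - x_0\rangle$. Applying Cauchy–Schwarz and then the definition \eqref{def_L_A_t} of $L_{A,1}$, namely $\|A^\top(y_1 - y_0)\| = L_{A,1}\|y_1 - y_0\|$, yields $\mu_d\|y_1-y_0\|^2 \leq L_{A,1}\|y_1-y_0\|\,\|x_1-x_0\|$, and dividing by $\mu_d\|y_1-y_0\|$ (the case $y_1 = y_0$ being trivial) gives \eqref{eq_bound_y_1_y_0}.

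The only mild subtlety — and the part I would be most careful about — is the handling of subgradients of $g$: since $g$ is only assumed proper, lsc, convex (not differentiable), the optimality condition should be phrased with a subgradient $g'(y_t) \in \partial g(y_t)$, and the inequalities $g(y) - g(y_t) \geq \langle g'(y_t), y - y_t\rangle$ used to absorb these terms. This is routine but worth stating cleanly. A secondary point is the degenerate case $y_t = y_{t-1}$ (or $y_1 = y_0$), where $L_{A,t}$ is defined via the convention $\tfrac{0}{0} = 0$; in \eqref{eq_bound_y_1_y_0} this case makes both sides zero, so the bound holds trivially. Everything else is direct algebra with the three-point identity $\langle a - b, c - b \rangle = \tfrac12\|a-b\|^2 + \tfrac12\|c-b\|^2 - \tfrac12\|a-c\|^2$.
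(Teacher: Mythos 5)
Your proposal is correct and follows essentially the same route as the paper: both parts rest on the three-point/optimality characterization of the prox-steps, and for \eqref{eq_bound_y_1_y_0} you sum the optimality conditions at $t=0$ and $t=1$ to reach the same key inequality $\mu_d\|y_1-y_0\|^2 \leq \langle A^\top(y_1-y_0), x_1-x_0\rangle$ that the paper obtains by summing two instances of \eqref{eq_dual_three-point_lemma}. The only (immaterial) difference is that you finish with Cauchy--Schwarz and a division, while the paper substitutes $\|y_1-y_0\|^2 = \|A^\top(y_1-y_0)\|^2/L_{A,1}^2$ and invokes Young's inequality; both yield the identical bound.
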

\begin{proof}
First, Ineq. \eqref{eq_dual_three-point_lemma} is known as the ``three-point'' lemma, which follows from the optimality condition of \eqref{dual_prox-mapping}; see a formal proof in Lemma 3.5 of \cite{LanBook2020}. Applying Ineq.~\eqref{eq_dual_three-point_lemma} to the case $t = 1$, noting that $\tau_1 = 0$, and setting $y = y_0$, we obtain
\begin{align*}
\langle - Ax_1, y_1 - y_0 \rangle + \tfrac{\mu_d}{2}\|\widetilde y_0 - y_1\|^2 - \tfrac{\mu_d}{2}\|\widetilde y_0 - y_0\|^2 + g(y_1) - g(y_0) + \tfrac{\mu_d}{2} \|y_1-y_0\|^2 \leq 0.
\end{align*}
Similarly, applying Ineq.~\eqref{eq_dual_three-point_lemma} to the case $t = 0$ with $\tau_0=0$, and setting $y = y_0$ give us
\begin{align*}
\langle - Ax_0, y_0 - y_1 \rangle + \tfrac{\mu_d}{2}\|\widetilde y_0-y_0\|^2 - \tfrac{\mu_d}{2}\|\widetilde y_0 - y_1\|^2 + g(y_0) - g(y_1) + \tfrac{\mu_d}{2} \|y_0-y_1\|^2 \leq 0.
\end{align*}
By summing up the above two inequalities, rearranging the terms, and using the definition of $L_{A,1}$ in \eqref{def_L_A_t}, we have 
\begin{align*}
\langle A^\top (y_1 - y_0), x_1 - x_0\rangle \geq \mu_d \|y_1 - y_0\|^2 = \tfrac{\mu_d}{L_{A, 1}^2}\|A^\top (y_1 - y_0)\|^2.
\end{align*}
Then using Young's inequality provides us with
\begin{align*}
\tfrac{\mu_d}{L_{A, 1}^2}\|A^\top (y_1- y_0)\|^2 \leq \tfrac{\mu_d}{2 L^2_{A,1}}\|A^\top (y_1- y_0)\|^2 + \tfrac{L_{A,1}^2}{2\mu_d}\|x_1-x_0\|^2,
\end{align*}
from which our results follow by rearranging the terms and further applying the definition of $L_{A, 1}$. 
\end{proof}
\vgap

The following proposition states some important conditions on the stepsize parameters, and characterizes the recursive relationship for the iterates generated by Algorithm \ref{alg:ac_primal_dual}. 
\begin{proposition}\label{proposition_1}
Let $\{x_t\}, \{\bar x_t\}$ and $\{y_t\}$ be generated by Algorithm~\ref{alg:ac_primal_dual}. Assume the parameters $\{\tau_t\}$, $\{\eta_t\}$, and $\{\beta_t\}$ satisfy
\begin{align}
\tau_1 &= 0, ~\beta_1 = 0,~\beta_t = \beta > 0,~t\geq 2 \label{cond_3}\\
    \eta_2 &\leq \min\left\{{(1-\beta)\eta_1}, \tfrac{\mu_d}{4 L^2_{A,1}} \right\},\label{cond_1}\\ 
    \eta_t &\leq \min\left\{2(1-\beta)^2 \eta_{t-1},  \tfrac{\tau_{t-2}+\mu_d}{ \tau_{t-1}}\eta_{t-1}, \tfrac{\tau_{t-1}}{4L_{A, t-1}^2}\right\}, ~t \geq 3.
    \label{cond_4}
\end{align}
Then for any $z = (x,y) \in Z$, we have 
\begin{align}\label{eq:proposition_1}
&\left(\tsum_{t=1}^k \eta_t\right) \cdot \left(Q(\widehat z_k, z) -  \tfrac{\mu_d}{2}\|\widetilde y_0 - y\|^2 +  \tfrac{\mu_d}{2}\|\widetilde y_0 - \widehat y_k\|^2  + \tfrac{\mu_d}{2}\|\widetilde y_k - y\|^2\right) \nn\\
 &\leq \tfrac{1}{2\beta}\|x_0 - x\|^2 - \tfrac{1}{2\beta}\|\bar x_{k+1} - x\|^2 -  \tfrac{\eta_2}{2\eta_{1}} \left[\|x_1 - \bar x_1\|^2 + \|x_2 - x_1\|^2 \right] + \tsum_{t=2}^{k+1}\Delta_{t},
\end{align}
where 
\begin{align}\label{def_Delta_t}
\Delta_t &:= \eta_t \langle A^\top (y_{t-1} - y_{t-2}), x_{t-1} - x_t\rangle - \tfrac{\eta_t \tau_{t-1}}{2}\|y_{t-2} - y_{t-1}\|^2 - \tfrac{1}{2}\|x_t - \bar x_{t-1}\|^2,
\end{align}
and 
\begin{align}
\widehat x_k &:= \tfrac{\sum_{t=1}^{k}\eta_{t+1} x_{t}}{\sum_{t=1}^{k}\eta_{t+1}}, \quad \widehat y_k := \tfrac{\sum_{t=1}^{k}\eta_{t+1} y_{t}}{\sum_{t=1}^{k}\eta_{t+1}}, \quad \widehat z_k := (\widehat x_k, \widehat y_k),\label{def_hat_x_k_y_k}\\
\widetilde y_k &:= \tfrac{\sum_{t=1}^{k-1}(\eta_{t+1}(\mu_d + \tau_t) -\eta_{t+2}\tau_{t+1})y_t + \eta_{k+1}(\mu_d + \tau_k)y_k}{\sum_{t=1}^k \eta_{t+1}}.\label{def_tilde_y_k}
\end{align}
\end{proposition}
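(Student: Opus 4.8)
The plan is to obtain \eqref{eq:proposition_1} by a purely algebraic superposition: I would write one ``three-point'' inequality for each of the two prox-mappings in Algorithm~\ref{alg:ac_primal_dual}, scale them by carefully chosen stepsize-dependent coefficients, add them up, feed in the moving-average relation \eqref{primal_prox-center} so that the primal squared distances telescope with the factor $\tfrac1{2\beta}$, and at the very end invoke convexity to pass to the ergodic iterate $\widehat z_k$. The quantities $\Delta_t$ of \eqref{def_Delta_t} will emerge as exactly the bilinear cross terms that remain uncancelled after this superposition, grouped together with the negative quadratics available to absorb them in a later step.

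For the primal side I would apply the three-point lemma (Lemma~3.5 of \cite{LanBook2020}) to \eqref{primal_prox-mapping} with the convex function $\eta_t f$ and prox-center $\bar x_{t-1}$, obtaining, for every $x\in X$, $\eta_t\big[\langle A^\top y_{t-1},x_t-x\rangle+f(x_t)-f(x)\big]+\tfrac12\|x_t-x\|^2+\tfrac12\|x_t-\bar x_{t-1}\|^2\le\tfrac12\|\bar x_{t-1}-x\|^2$. Using that $\beta_t\equiv\beta$ and $\beta_1=0$ from \eqref{cond_3} (so $\bar x_1=\bar x_0=x_0$, hence $\|x_1-\bar x_1\|^2=\|x_1-x_0\|^2$) together with the identity $\|\bar x_t-x\|^2=(1-\beta)\|\bar x_{t-1}-x\|^2+\beta\|x_t-x\|^2-\beta(1-\beta)\|x_t-\bar x_{t-1}\|^2$, I would rewrite $\tfrac12\|\bar x_{t-1}-x\|^2-\tfrac12\|x_t-x\|^2$ as $\tfrac1{2\beta}\big(\|\bar x_{t-1}-x\|^2-\|\bar x_t-x\|^2\big)-\tfrac{1-\beta}{2}\|x_t-\bar x_{t-1}\|^2$; summing over the relevant window of indices then collapses the primal distances into $\tfrac1{2\beta}\|x_0-x\|^2-\tfrac1{2\beta}\|\bar x_{k+1}-x\|^2$ and leaves a reservoir of negative $\|x_t-\bar x_{t-1}\|^2$ terms, one copy of which is kept inside $\Delta_t$ and the rest discarded.

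For the dual side I would take Ineq.~\eqref{eq_dual_three-point_lemma} of Lemma~\ref{dual_three-point_lemma}, multiply it by the coefficient that makes its bilinear term $\langle -Ax_t,y_t-y\rangle$ pair with the term $\langle A^\top y_t,x_{t+1}-x\rangle$ generated by the primal step at iteration $t+1$, and sum over $t$. The quadratics $\tfrac{\mu_d+\tau_t}{2}\|y_t-y\|^2$ and $-\tfrac{\tau_t}{2}\|y_{t-1}-y\|^2$ then form a telescoping sum whose residual is exactly $\tfrac{\mu_d}{2}\|\widetilde y_k-y\|^2$ for $\widetilde y_k$ as in \eqref{def_tilde_y_k}; here the middle inequality $\eta_t\le\tfrac{\tau_{t-2}+\mu_d}{\tau_{t-1}}\eta_{t-1}$ in \eqref{cond_4} is precisely what keeps the coefficients in \eqref{def_tilde_y_k} nonnegative, so that $\widetilde y_k$ is a genuine convex combination. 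The terms $-\tfrac{\mu_d}{2}\|\widetilde y_0-y\|^2$ pass through unchanged, while the aggregated $\tfrac{\mu_d}{2}\|\widetilde y_0-y_t\|^2$ terms, bounded below by convexity of $\|\cdot\|^2$, yield $\tfrac{\mu_d}{2}\|\widetilde y_0-\widehat y_k\|^2$.

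Adding the two scaled families and using $\langle A^\top y_{t-1},x_t-x\rangle=\langle y_{t-1},Ax_t\rangle-\langle y_{t-1},Ax\rangle$ and $\langle -Ax_t,y_t-y\rangle=\langle Ax_t,y\rangle-\langle Ax_t,y_t\rangle$, the bilinear contributions split into a diagonal part $\langle Ax_t,y\rangle-\langle Ax,y_t\rangle$, which together with the aggregated differences $f(x_t)-f(x)$, $g(y_t)-g(y)$ -- and then convexity of $f$ and $g$, linearity of the bilinear form, and the definitions \eqref{def_hat_x_k_y_k} -- assembles into the $Q(\widehat z_k,z)$ and $\mu_d$-terms on the left-hand side of \eqref{eq:proposition_1}, and an off-diagonal remainder whose $t$-th term, after one summation by parts, is $\eta_t\langle A^\top(y_{t-1}-y_{t-2}),x_{t-1}-x_t\rangle$; grouping this with the leftover quadratics $-\tfrac{\eta_t\tau_{t-1}}{2}\|y_{t-2}-y_{t-1}\|^2$ (from the dual step at $t-1$) and $-\tfrac12\|x_t-\bar x_{t-1}\|^2$ (from the primal step at $t$) is exactly $\Delta_t$ of \eqref{def_Delta_t}, for $t=2,\dots,k+1$. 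The first iteration is then handled separately: there $\tau_1=\beta_1=0$ and $\bar x_1=x_0$, and the bound $\|y_1-y_0\|\le\tfrac{L_{A,1}}{\mu_d}\|x_1-x_0\|$ from \eqref{eq_bound_y_1_y_0}, combined with $\eta_2\le\tfrac{\mu_d}{4L_{A,1}^2}$ and $\eta_2\le(1-\beta)\eta_1$ from \eqref{cond_1}, is exactly what lets the anomalous first-step cross term be absorbed into the explicit $-\tfrac{\eta_2}{2\eta_1}\big[\|x_1-\bar x_1\|^2+\|x_2-x_1\|^2\big]$ appearing in \eqref{eq:proposition_1}. I expect the main difficulty to be precisely this bookkeeping: choosing the dual coefficients so that $\langle Ax_t,y\rangle$ and $\langle Ax,y_t\rangle$ cancel \emph{exactly} across the primal and dual updates, performing the summation by parts so that the residuals reassemble \emph{exactly} into $\sum_{t=2}^{k+1}\Delta_t$, and verifying that the boundary contributions at $t=1$ and $t=k+1$ produce precisely the stated $\tfrac1{2\beta}\|x_0-x\|^2$, $-\tfrac1{2\beta}\|\bar x_{k+1}-x\|^2$ and $-\tfrac{\eta_2}{2\eta_1}[\cdots]$ terms and nothing else -- which is what pins down where each of the conditions \eqref{cond_3}--\eqref{cond_4} is used, with the growth conditions $\eta_2\le(1-\beta)\eta_1$ and $\eta_t\le 2(1-\beta)^2\eta_{t-1}$ keeping the residual $\|x_t-\bar x_{t-1}\|^2$ coefficients of the correct sign and the bound $\eta_t\le\tfrac{\tau_{t-1}}{4L_{A,t-1}^2}$ in \eqref{cond_4} not entering here at all -- it is needed only afterwards, to drive $\sum_{t=2}^{k+1}\Delta_t$ nonpositive.
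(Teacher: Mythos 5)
Your outline gets several peripheral ingredients right: the moving-average identity that converts $\tfrac12\|\bar x_{t-1}-x\|^2-\tfrac12\|x_t-x\|^2$ into the $\tfrac1{2\beta}$-telescope, the role of $\eta_t\le\tfrac{\tau_{t-2}+\mu_d}{\tau_{t-1}}\eta_{t-1}$ in making the coefficients in \eqref{def_tilde_y_k} nonnegative, and the observation that $\eta_t\le\tau_{t-1}/(4L_{A,t-1}^2)$ is not needed here but only later to drive $\sum\Delta_t$ down. The central mechanism you propose, however, does not produce the claimed estimate. With a single three-point inequality per primal prox step, evaluated only at the generic $x$, the uncancelled bilinear residue after pairing with the dual inequality is of the form $-\eta_t\langle A(x_t-x),y_t-y_{t-1}\rangle$, i.e.\ (after matching weights) $\sum_t\langle Ax_t,\eta_t y_{t-1}-\eta_{t+1}y_t\rangle$ plus boundary terms. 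One summation by parts turns ``difference times value'' into ``value times difference'' plus boundary terms; it cannot manufacture the product of two \emph{local} differences $\langle A^\top(y_{t-1}-y_{t-2}),x_{t-1}-x_t\rangle$ that defines $\Delta_t$, and it leaves behind reference-point-dependent and boundary terms (of the type $\langle A(x_{s+1}-x_s),\eta_{s+1}y_s-\eta_1y_0\rangle$ and $\langle Ax_{k+1},\eta_{k+1}y_k\rangle$) for which \eqref{eq:proposition_1} offers no absorbing negative quadratics. This is precisely the obstruction that forces extrapolation in Chambolle--Pock; Algorithm~\ref{alg:ac_primal_dual} has no extrapolation, so a different device is required. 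There is also a weight mismatch: your route attaches $f(x_t)$ and $\langle Ax_t,\cdot\rangle$ to the weight $\eta_t$, whereas $\widehat x_k$ in \eqref{def_hat_x_k_y_k} weights $x_t$ by $\eta_{t+1}$; rescaling the primal inequality to fix this destroys the unit coefficients on the squared distances and hence the $\tfrac1{2\beta}$-telescope.

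The paper's missing idea is to use the optimality condition of the primal prox at iteration $t-1$ a \emph{second} time, evaluated at the specific point $x_t$ and rescaled by $\tfrac{\eta_t}{\eta_{t-1}(1-\beta_{t-1})}$ via $x_{t-1}-\bar x_{t-2}=\tfrac1{1-\beta_{t-1}}(x_{t-1}-\bar x_{t-1})$ (Ineqs.~\eqref{eq1}--\eqref{eq3}). Adding this to the $t$-th optimality condition at the generic $x$ directly creates the double-difference cross term, shifts the gap terms to $f(x_{t-1})+\langle Ax_{t-1},y\rangle$ with weight $\eta_t$ (which is exactly the $\eta_{t+1}$-weighting of $x_t$ in \eqref{def_hat_x_k_y_k}), and generates the extra negative quadratic $\tfrac{\eta_t}{2\eta_{t-1}(1-\beta_{t-1})}\left[\|x_{t-1}-\bar x_{t-1}\|^2+\|x_t-x_{t-1}\|^2\right]$. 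Only through this term do the conditions $\eta_t\le2(1-\beta)^2\eta_{t-1}$ and $\eta_2\le(1-\beta)\eta_1$ ever enter (the latter also being the source of the $-\tfrac{\eta_2}{2\eta_1}[\cdots]$ boundary term in \eqref{eq:proposition_1}); in your scheme the coefficient of $\|x_t-\bar x_{t-1}\|^2$ never sees the ratio $\eta_t/\eta_{t-1}$, so the very conditions you invoke for ``keeping the residual coefficients of the correct sign'' would have nothing to act on. A smaller mis-attribution: \eqref{eq_bound_y_1_y_0} and $\eta_2\le\mu_d/(4L_{A,1}^2)$ play no role in this proposition; they are used only in Theorem~\ref{main_theorem} to bound $\Delta_2$.
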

\begin{proof}
First, by the optimality condition of step \eqref{primal_prox-mapping} and the convexity of $f$, we have for $t \geq 1$,
\begin{align}\label{eq1}
\langle \eta_t A^\top y_{t-1}+ x_t - \bar x_{t-1}, x - x_t\rangle \geq \eta_t[f(x_t) - f(x)], \quad \forall x \in X,
\end{align}
and consequently for $t \geq 2$,
\begin{align}\label{eq2}
\langle \eta_{t-1} A^\top y_{t-2} + x_{t-1} - \bar x_{t-2}, x_t - x_{t-1}\rangle \geq \eta_{t-1}[f(x_{t-1}) - f(x_t)].
\end{align}
By \eqref{primal_prox-center}, we have the relationship $x_{t-1} - \bar x_{t-2} = \tfrac{1}{1-\beta_{t-1}} (x_{t-1}-\bar x_{t-1})$, so we can rewrite \eqref{eq2} as
\begin{align}\label{eq3}
\langle \eta_t A^\top y_{t-2} + \tfrac{\eta_t}{\eta_{t-1}(1-\beta_{t-1})}(x_{t-1} - \bar x_{t-1}), x_t - x_{t-1}\rangle \geq \eta_t[f(x_{t-1}) - f(x_t)]. 
\end{align}
Summing up \eqref{eq1} and \eqref{eq3} and rearranging the terms give us for $t \geq 2$,
\begin{align*}
&\eta_t \langle A^\top y_{t-1}, x - x_{t-1}\rangle + \eta_t \langle A^\top (y_{t-1} - y_{t-2}), x_{t-1}-x_t \rangle\\ 
&+ \langle x_t - \bar x_{t-1}, x - x_t\rangle + \tfrac{\eta_t}{\eta_{t-1}(1-\beta_{t-1})} \langle x_{t-1}-\bar x_{t-1}, x_t - x_{t-1} \rangle \geq \eta_t[f(x_{t-1}) - f(x)].
\end{align*}
By utilizing the fact that $2\langle x - y, z - x\rangle = \|y-z\|^2 - \|x-y\|^2 - \|x-z\|^2$ for the last two terms in the left-hand side of the above inequality, we obtain
\begin{align}\label{eq4}
&\eta_t \langle A^\top y_{t-1}, x - x_{t-1}\rangle + \eta_t \langle A^\top (y_{t-1} - y_{t-2}), x_{t-1}-x_t \rangle\nn\\ 
&+ \tfrac{1}{2}\|\bar x_{t-1}-x\|^2 - \tfrac{1}{2}\|x_{t}-\bar x_{t-1}\|^2 -\tfrac{1}{2}\|x_t - x\|^2\nn\\
&+ \tfrac{\eta_t}{2\eta_{t-1}(1-\beta_{t-1})} \left[\|x_t - y _{t-1}\|^2 - \|x_{t-1} - \bar x_{t-1}\|^2 - \|x_t - x_{t-1}\|^2\right] \geq \eta_t[f(x_{t-1}) - f(x)],
\end{align}
Meanwhile, \eqref{primal_prox-center} also indicates that
\begin{align}\label{eq5}
    \|x_t - x\|^2 &= \|\tfrac{1}{\beta_t} (\bar x_t - x) - \tfrac{1-\beta_t}{\beta_t}(\bar x_{t-1} - x)\|^2\nn\\
    &\overset{(i)}=\tfrac{1}{\beta_t}\|\bar x_t -x\|^2 - \tfrac{1-\beta_t}{\beta_t}\|\bar x_{t-1} - x\|^2 + \tfrac{1}{\beta_t} \cdot \tfrac{1-\beta_t}{\beta_t} \|\bar x_t - \bar x_{t-1}\|^2\nn\\
    &= \tfrac{1}{\beta_t}\|\bar x_t -x\|^2 - \tfrac{1-\beta_t}{\beta_t}\|\bar x_{t-1} - x\|^2 + (1-\beta_t) \|x_t - \bar x_{t-1}\|^2,
\end{align}
where step (i) follows from the fact that $\|\alpha a + (1-\alpha) b\|^2 = \alpha \|a\|^2 + (1-\alpha)\|b\|^2 - \alpha (1-\alpha)\|a-b\|^2,~\forall {\alpha} \in \bbr$. By combining {Ineq.} \eqref{eq4} and {Eq.} \eqref{eq5} and rearranging the terms, we obtain
\begin{align}\label{eq6}
&\eta_t \langle A^\top y_{t-1}, x_{t-1} - x\rangle + \eta_t[f(x_{t-1}) - f(x)]  + \tfrac{\eta_t}{2\eta_{t-1}(1-\beta_{t-1})} \left[\|x_{t-1} - \bar x_{t-1}\|^2 + \|x_t - x_{t-1}\|^2 \right]\nn\\
&\leq \tfrac{1}{2\beta_t}\|\bar x_{t-1} - x\|^2 - \tfrac{1}{2\beta_t} \|\bar x_t - x\|^2 +  \eta_t \langle A^\top (y_{t-1} - y_{t-2}), x_{t-1}-x_t \rangle\nn\\
&\quad - \left[ \tfrac{1}{2} + \tfrac{1 - \beta_t}{2} - \tfrac{\eta_t}{2 \eta_{t-1}(1-\beta_{t-1})} \right]\|x_t - \bar x_{t-1}\|^2.
\end{align}
When $t \geq 3$, by utilizing the fact that $\|x_{t-1} - \bar x_{t-1}\|^2 + \|x_t - x_{t-1}\|^2 \geq \tfrac{1}{2}\|x_t - \bar x_{t-1}\|^2$ and rearranging the terms in Ineq.~\eqref{eq6}, we obtain
\begin{align}\label{eq6_prime}
&\eta_t \langle A^\top y_{t-1}, x_{t-1} - x\rangle + \tfrac{1}{2\beta_t} \|\bar x_t - x\|^2 \nn\\
&\leq \tfrac{1}{2\beta_t}\|\bar x_{t-1} - x\|^2 +  \eta_t \langle A^\top (y_{t-1} - y_{t-2}), x_{t-1}-x_t \rangle - \left[ \tfrac{1}{2} + \tfrac{1 - \beta_t}{2} - \tfrac{\eta_t}{4 \eta_{t-1}(1-\beta_{t-1})} \right]\|x_t - \bar x_{t-1}\|^2\nn\\
&\overset{(i)}\leq \tfrac{1}{2\beta_t}\|\bar x_{t-1} - x\|^2 +  \eta_t \langle A^\top (y_{t-1} - y_{t-2}), x_{t-1}-x_t \rangle- \tfrac{1}{2}\|x_t - \bar x_{t-1}\|^2,
\end{align}
where step (i) follows from $\eta_t \leq 2(1-\beta)^2 \eta_{t-1}$ in \eqref{cond_4}. 
Combining Ineqs.~\eqref{eq6_prime} and \eqref{eq_dual_three-point_lemma}, and rearranging the terms, we have for $t \geq 3$ and any $x\in X,~y \in Y$,
\begin{align}
&\eta_t \left[f(x_{t-1}) + \langle A x_{t-1}, y \rangle - \tfrac{\mu_d}{2}\|\widetilde y_0 - y\|^2 - g(y) - f(x) - \langle Ax, y_{t-1}\rangle + \tfrac{\mu_d}{2}\|\widetilde y_0 - y_{t-1}\|^2 + g(y_{t-1}) \right]\nn\\
& \leq \tfrac{1}{2\beta_t}\|\bar x_{t-1} - x\|^2 - \tfrac{1}{2\beta_t}\|\bar x_{t} - x\|^2 + \tfrac{\eta_t\tau_{t-1}}{2}\|y_{t-2}- y\|^2 - \tfrac{\eta_t(\mu_d + \tau_{t-1})}{2}\|y_{t-1} - y\|^2\nn\\
& \quad + \eta_t \langle A^\top (y_{t-1} - y_{t-2}), x_{t-1}-x_t \rangle - \tfrac{\eta_t \tau_{t-1}}{2}\|y_{t-2} - y_{t-1}\|^2- \tfrac{1}{2}\|x_t - \bar x_{t-1}\|^2.\label{eq9}
\end{align}
When $t=2$, we use the condition $\beta_1 = 0$ in \eqref{eq6} to obtain
\begin{align}
&\eta_2 \langle A^\top y_{1}, x_{1} - x\rangle + \eta_2[f(x_{1}) - f(x)]  + \tfrac{\eta_t}{2\eta_{t-1}} \left[\|x_{1} - \bar x_{1}\|^2 + \|x_2 - x_{1}\|^2 \right]\nn\\
&\leq \tfrac{1}{2\beta_2}\|\bar x_{1} - x\|^2 - \tfrac{1}{2\beta_2} \|\bar x_2 - x\|^2 +  \eta_2 \langle A^\top (y_{1} - y_{0}), x_{1}-x_2 \rangle - \left[ \tfrac{1}{2} + \tfrac{1 - \beta_t}{2} - \tfrac{\eta_t}{2 \eta_{t-1}} \right]\|x_t - \bar x_{t-1}\|^2\nn\\
&\overset{(i)}\leq \tfrac{1}{2\beta_2}\|\bar x_{1} - x\|^2 - \tfrac{1}{2\beta_2} \|\bar x_2 - x\|^2 +  \eta_2 \langle A^\top (y_{1} - y_{0}), x_{1}-x_2 \rangle -\tfrac{1}{2}\|x_2 - \bar x_{1}\|^2,\label{eq9_3}
\end{align}
where step (i) follows from the condition $\eta_2 \leq (1-\beta)\eta_1$ in \eqref{cond_1}.
Then, by combining the above inequality with Ineq.~\eqref{eq_dual_three-point_lemma} for the case $t=1$, and invoking that $\tau_1=0$, we have
\begin{align}\label{eq9_2}
&\eta_2 \left[f(x_1) + \langle A x_{1}, y \rangle - \tfrac{\mu_d}{2}\|\widetilde y_0 - y\|^2 - g(y) - f(x) - \langle Ax, y_{1}\rangle + \tfrac{\mu_d}{2}\|\widetilde y_0 - y_{1}\|^2 + g(y_{1}) \right]\nn\\
&   \quad + \tfrac{\eta_2}{2\eta_{1}} \left[\|x_1 - \bar x_1\|^2 + \|x_2 - x_1\|^2 \right] + \tfrac{\eta_2\mu_d}{2}\|y_{1} - y\|^2+ \tfrac{1}{2\beta_2} \|\bar x_2 - x\|^2\nn\\
&\overset{(i)}\leq \tfrac{1}{2\beta_2}\|\bar x_{1} - x\|^2 +  \eta_2 \langle A^\top (y_1-y_0), x_{1} - x_2 \rangle-\tfrac{1}{2}\|x_2 - \bar x_{1}\|^2.
\end{align}
 By taking the summation of Ineq.~\eqref{eq9_2} and the telescope sum of Ineq.~\eqref{eq9} for $t=3,..., k+1$, and noting that $\beta_t = \beta$ for $t \geq 2$, we obtain
\begin{align}\label{eq_final}
& \tsum_{t=1}^k \eta_{t+1} \left[f(x_t)  +\langle A x_{t}, y \rangle - \tfrac{\mu_d}{2}\|\widetilde y_0 - y\|^2 - g(y)- f(x) - \langle Ax, y_{t}\rangle + \tfrac{\mu_d}{2}\|\widetilde y_0 - y_{t}\|^2 + g(y_{t}) \right]+ \tfrac{1}{2\beta}\|\bar x_{k+1} - x\|^2\nn\\
&  + \tfrac{\eta_2}{2\eta_{1}} \left[\|x_1 - \bar x_1\|^2 + \|x_2 - x_1\|^2 \right] + \tfrac{\eta_{k+1}(\mu_d + \tau_k)}{2}\|y_k - y\|^2 + \tsum_{t=1}^{k-1}\tfrac{\eta_{t+1}(\mu_d + \tau_t) - \eta_{t+2}\tau_{t+1}}{2}\|y_t - y\|^2\nn\\
& \leq \tfrac{1}{2\beta}\|x_0 - x\|^2 + \tsum_{t=2}^{k+1}\Delta_{t},
\end{align}
where $\Delta_t$ is defined in \eqref{def_Delta_t}. 
By utilizing the definition of $\widehat x_k$ and $\widehat y_k$ in \eqref{def_hat_x_k_y_k} and the convexity of $\|\cdot\|^2$, $f$ and $g$, we obtain
\begin{align*}
&\tsum_{t=1}^k \eta_{t+1} \left[f(x_t)  + \langle A x_{t}, y \rangle - \tfrac{\mu_d}{2}\|\widetilde y_0-y\|^2 - g(y) - f(x) - \langle Ax, y_{t}\rangle + \tfrac{\mu_d}{2}\|\widetilde y_0-y_{t}\|^2 + g(y_{t}) \right] \\
&\geq (\tsum_{t=1}^k \eta_{t+1}) \cdot \left[f(\widehat x_k)  +\langle A \widehat x_{k}, y \rangle - \tfrac{\mu_d}{2}\|\widetilde y_0-y\|^2 - g(y) - f(x) - \langle Ax, \widehat y_{k}\rangle + \tfrac{\mu_d}{2}\|\widetilde y_0-\widehat y_{k}\|^2 + g(\widehat y_{k}) \right].
\end{align*}
Then, invoking the definition of $Q(\cdot, \cdot)$ in \eqref{def_Q_mu}, we have
\begin{align*}
& \tsum_{t=1}^k \eta_{t+1} \left[f(x_t)  +\langle A x_{t}, y \rangle - \tfrac{\mu_d}{2}\|\widetilde y_0 - y\|^2 - g(y) - f(x) - \langle Ax, y_{t}\rangle + \tfrac{\mu_d}{2}\|\widetilde y_0 - y_{t}\|^2 + g(y_{t}) \right]\nn\\
& \geq (\tsum_{t=1}^k \eta_{t+1}) \cdot \big(Q(\widehat z_k, z) -  \tfrac{\mu_d}{2}\|\widetilde y_0 - y\|^2 +  \tfrac{\mu_d}{2}\| \widetilde y_0 - \widehat y_k\|^2\big).
\end{align*}
Similarly, we use the convexity of $\|\cdot\|^2$ and the definition of $\widetilde y_k$ to obtain
\begin{align}\label{use_tilde_y_k}
\tfrac{\eta_{k+1}(\mu_d + \tau_k)}{2}\|y_k - y\|^2 + \tsum_{t=1}^{k-1}\tfrac{\eta_{t+1}(\mu_d + \tau_t) - \eta_{t+2}\tau_{t+1}}{2}\|y_t - y\|^2 \geq (\tsum_{t=1}^k\eta_{t+1}) \cdot \tfrac{\mu_d}{2}\|\widetilde y_k - y\|^2.
\end{align}
Substituting the above two inequalities into Ineq.~\eqref{eq_final}, we obtain the desired result.
\end{proof}
\vgap

In the next theorem, we further bound the terms $\tsum_{t=2}^k \Delta_t$ to establish the convergence guarantees for Algorithm~\ref{alg:ac_primal_dual} in terms of the primal-dual gap function $Q(\widehat z_k, z)$. 
\begin{theorem}\label{main_theorem}
Let $\{x_t\}, \{\bar x_t\}$ and $\{y_t\}$ be generated by Algorithm~\ref{alg:ac_primal_dual} with
parameters satisfying \eqref{cond_3}-\eqref{cond_4}. 
Then we have for any $k \geq 2$ and any $z = (x,y) \in Z$, 
\begin{align}\label{eq_main_theorem}
&Q(\widehat z_k, z)- \tfrac{\mu_d}{2}\|\widetilde y_0 - y\|^2 + \tfrac{\mu_d}{2}\|\widetilde y_0 -\widehat y_k \|^2+ \tfrac{\mu_d}{2}\|\widetilde y_k - y\|^2 \nn\\
 &\leq \tfrac{1}{\sum_{t=1}^k \eta_t}\left[\tfrac{\|x_0 - x\|^2}{2\beta} + \left(\tfrac{5\eta_2 L^2_{A, 1}}{4\mu_d}  - \tfrac{\eta_2}{2\eta_1}\right)\|x_1 - x_0\|^2 - \tfrac{\|\bar x_{k+1} - x\|^2}{2\beta}\right].
\end{align}
\end{theorem}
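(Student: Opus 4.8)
The plan is to start from Proposition~\ref{proposition_1} and reduce the whole theorem to the single estimate $\sum_{t=2}^{k+1}\Delta_t \le \tfrac{5\eta_2 L_{A,1}^2}{4\mu_d}\|x_1-x_0\|^2$. Granting this, I would substitute into \eqref{eq:proposition_1}, observe that $\beta_1=0$ together with $\bar x_0=x_0$ forces $\bar x_1=x_0$ (so $\|x_1-\bar x_1\|^2=\|x_1-x_0\|^2$), discard the harmless nonpositive term $-\tfrac{\eta_2}{2\eta_1}\|x_2-x_1\|^2$, and divide both sides by $\sum_{t=1}^k\eta_t$; this produces \eqref{eq_main_theorem} verbatim, with the constant $\tfrac{5\eta_2 L_{A,1}^2}{4\mu_d}-\tfrac{\eta_2}{2\eta_1}$ appearing exactly because $\|x_1-\bar x_1\|^2$ merges with the new $\|x_1-x_0\|^2$ term. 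So the entire argument is the bound on $\sum_{t=2}^{k+1}\Delta_t$, which I would split into a telescoping tail ($t\ge 3$) and a boundary term ($t=2$) handled via Lemma~\ref{dual_three-point_lemma}.

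For the tail $t\ge 3$ I would bound the bilinear term in $\Delta_t$ (see \eqref{def_Delta_t}) by Cauchy--Schwarz and the definition \eqref{def_L_A_t}, namely $\langle A^\top(y_{t-1}-y_{t-2}),x_{t-1}-x_t\rangle \le L_{A,t-1}\|y_{t-1}-y_{t-2}\|\,\|x_{t-1}-x_t\|$, then apply Young's inequality with weight $\tau_{t-1}$ so that the resulting $\|y_{t-1}-y_{t-2}\|^2$ contribution is cancelled exactly by the term $-\tfrac{\eta_t\tau_{t-1}}{2}\|y_{t-2}-y_{t-1}\|^2$ inside $\Delta_t$, leaving $\Delta_t \le \tfrac{\eta_t L_{A,t-1}^2}{2\tau_{t-1}}\|x_{t-1}-x_t\|^2 - \tfrac12\|x_t-\bar x_{t-1}\|^2$. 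Next I would use $\|x_{t-1}-x_t\|^2 \le 2\|x_{t-1}-\bar x_{t-1}\|^2 + 2\|x_t-\bar x_{t-1}\|^2$ together with the stepsize condition $\eta_t \le \tfrac{\tau_{t-1}}{4L_{A,t-1}^2}$ from \eqref{cond_4} (which gives $\tfrac{\eta_t L_{A,t-1}^2}{\tau_{t-1}}\le\tfrac14$), turning the estimate into $\Delta_t \le \tfrac14\|x_{t-1}-\bar x_{t-1}\|^2 - \tfrac14\|x_t-\bar x_{t-1}\|^2$. Finally, since $\bar x_{t-1}=(1-\beta)\bar x_{t-2}+\beta x_{t-1}$ yields $x_{t-1}-\bar x_{t-1}=(1-\beta)(x_{t-1}-\bar x_{t-2})$ and hence $\|x_{t-1}-\bar x_{t-1}\|^2\le\|x_{t-1}-\bar x_{t-2}\|^2$, the bound becomes genuinely telescoping: $\sum_{t=3}^{k+1}\Delta_t \le \tfrac14\|x_2-\bar x_1\|^2 - \tfrac14\|x_{k+1}-\bar x_k\|^2 \le \tfrac14\|x_2-x_0\|^2$, using $\bar x_1=x_0$.

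For the boundary term $\Delta_2$, I would use $\tau_1=0$ to drop the middle term, invoke \eqref{eq_bound_y_1_y_0} in the form $\|A^\top(y_1-y_0)\| = L_{A,1}\|y_1-y_0\| \le \tfrac{L_{A,1}^2}{\mu_d}\|x_1-x_0\|$, and split $x_1-x_2=(x_1-x_0)-(x_2-x_0)$ (again using $\bar x_1=x_0$). Bounding the first piece directly by $\tfrac{\eta_2 L_{A,1}^2}{\mu_d}\|x_1-x_0\|^2$ and the cross term $\tfrac{\eta_2 L_{A,1}^2}{\mu_d}\|x_1-x_0\|\,\|x_2-x_0\|$ by Young's inequality calibrated so the $\|x_2-x_0\|^2$ coefficient is exactly $\tfrac14$ (which leaves $(\eta_2 L_{A,1}^2/\mu_d)^2\|x_1-x_0\|^2$), and then using $\eta_2\le\tfrac{\mu_d}{4L_{A,1}^2}$ from \eqref{cond_1} to replace $(\eta_2 L_{A,1}^2/\mu_d)^2$ by $\tfrac14(\eta_2 L_{A,1}^2/\mu_d)$, gives $\Delta_2 \le \tfrac{5\eta_2 L_{A,1}^2}{4\mu_d}\|x_1-x_0\|^2 - \tfrac14\|x_2-x_0\|^2$. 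Adding this to the tail bound, the two $\pm\tfrac14\|x_2-x_0\|^2$ terms cancel and one obtains $\sum_{t=2}^{k+1}\Delta_t \le \tfrac{5\eta_2 L_{A,1}^2}{4\mu_d}\|x_1-x_0\|^2 - \tfrac14\|x_{k+1}-\bar x_k\|^2 \le \tfrac{5\eta_2 L_{A,1}^2}{4\mu_d}\|x_1-x_0\|^2$, which is the claimed estimate. The part requiring the most care is choreographing the two Young's inequalities (one for the tail, one for $t=2$) so that every $\|y\|$-difference is absorbed and the leftover $\|x\|$-difference terms simultaneously telescope and produce the exact constant $\tfrac54$; the cancellation of $\tfrac14\|x_2-x_0\|^2$ between $\Delta_2$ and the $t\ge3$ telescoping sum is the linchpin, and keeping track of which stepsize condition is used where (\eqref{cond_1} for $t=2$, \eqref{cond_4} for $t\ge3$, the rest being already consumed inside Proposition~\ref{proposition_1}) is the main bookkeeping hurdle.
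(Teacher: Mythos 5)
Your proposal is correct and follows essentially the same route as the paper's proof: reduce everything to bounding $\sum_{t=2}^{k+1}\Delta_t$ via Proposition~\ref{proposition_1}, telescope the $t\ge 3$ terms using Cauchy--Schwarz, Young's inequality with weight $\tau_{t-1}$, the relation $x_{t-1}-\bar x_{t-1}=(1-\beta)(x_{t-1}-\bar x_{t-2})$ and the condition $\eta_t\le\tau_{t-1}/(4L_{A,t-1}^2)$, then handle $\Delta_2$ separately via Lemma~\ref{dual_three-point_lemma} and $\eta_2\le\mu_d/(4L_{A,1}^2)$ so that the leftover $\tfrac14\|x_2-\bar x_1\|^2$ cancels. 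The only differences from the paper are cosmetic (the order in which the $(1-\beta)^2$ factor enters the telescoping step, and the exact calibration of Young's inequality for the cross term in $\Delta_2$), and both yield the identical constant $\tfrac{5\eta_2 L_{A,1}^2}{4\mu_d}$.
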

\begin{proof}
 First notice that Ineq.~\eqref{eq:proposition_1} holds in view of Proposition~\ref{proposition_1}. We attempt to provide an upper bound for $\Delta_t$ used in this relation. For the case $t\geq 3$, we have
\begin{align}\label{bound_delta_t}
\Delta_t &\overset{(i)}\leq \eta_t L_{A, t-1}\|y_{t-1} - y_{t-2}\|\|x_{t-1}-x_t\| - \tfrac{\eta_t \tau_{t-1}}{2}\|y_{t-2}-y_{t-1}\|^2 - \tfrac{1}{2}\|x_t - \bar x_{t-1}\|^2\nn\\
& \overset{(ii)}\leq \tfrac{\eta_t L_{A, t-1}^2}{2 \tau_{t-1}} \|x_{t-1} - x_t\|^2 - \tfrac{1}{2}\|x_t - \bar x_{t-1}\|^2 \nn\\
& \overset{(iii)}\leq (1-\beta)^2\cdot \tfrac{\eta_t L_{A, t-1}^2}{\tau_{t-1}}\|x_{t-1}-\bar x_{t-2}\|^2 - \left( \tfrac{1}{2} - \tfrac{\eta_t L_{A, t-1}^2}{\tau_{t-1}}\right)\|x_t - \bar x_{t-1}\|^2\nn\\
& \overset{(iv)}\leq \tfrac{1}{4}\|x_{t-1}-\bar x_{t-2}\|^2 - \tfrac{1}{4}\|x_t - \bar x_{t-1}\|^2,
\end{align}
where step (i) follows from Cauchy-Schwarz inequality, step (ii) follows from Young's inequality, step (iii) follows from the fact that 
\begin{align*}
\|x_{t-1}- x_t\|^2 &= \|x_{t-1} - \bar x_{t-1}+ \bar x_{t-1} - x_t\|^2 = \|(1-\beta)(x_{t-1} - \bar x_{t-2}) + \bar x_{t-1} - x_t\|^2\\
&\leq 2(1-\beta)^2\|x_{t-1} - \bar x_{t-2}\|^2 + 2\|x_t - \bar x_{t-1}\|^2,
\end{align*}
and step (iv) follows from the conditions $\eta_t \leq \tfrac{\tau_{t-1}}{4L_{A, t-1}^2}$ in \eqref{cond_4}. Next, we bound $\Delta_2$. We have
\begin{align}\label{bound_delta_2}
\Delta_2 & \overset{(i)}\leq \eta_2 L_{A,1} \|y_1-y_0\|\|x_1 - x_2\|  - \tfrac{1}{2}\|x_2 -\bar x_1\|^2\nn\\
&\overset{(ii)}\leq \tfrac{\eta_2 L^2_{A, 1}}{\mu_d}\|x_1 - x_0\|(\|x_1 - x_0\| + \|x_2 - x_0\|) -\tfrac{1}{2}\|x_2 - x_0\|^2\nn\\
&\overset{(iii)}\leq \tfrac{5\eta_2 L^2_{A, 1}}{4\mu_d} \|x_1 - x_0\|^2+ \left(\tfrac{\eta_2 L^2_{A, 1}}{\mu_d} -\tfrac{1}{2}\right)\|x_2 - x_0\|^2\nn\\
&\overset{(iv)}\leq \tfrac{5\eta_2 L^2_{A, 1}}{4\mu_d}\|x_1 - x_0\|^2-\tfrac{1}{4}\|x_2 - \bar x_1\|^2,
\end{align}
where step (i) follows from Cauchy-Schwarz inequality and the definition of $L_{A,1}$, step (ii) follows from Ineq.~\eqref{eq_bound_y_1_y_0} and the fact that  $x_0 = \bar x_1$, step (iii) follows from Young's inequality, and step (iv) follows from the condition $\eta_2 \leq \tfrac{\mu_d}{4 L^2_{A,1}}$. By substituting Ineqs.~\eqref{bound_delta_t} and \eqref{bound_delta_2} into Ineq.~\eqref{eq:proposition_1} and rearranging the terms, we obtain 
\begin{align*}
&\left(\tsum_{t=1}^k \eta_t\right) \cdot \left(Q(\widehat z_k, z) - \tfrac{\mu_d}{2}\|\widetilde y_0 -y \|^2 + \tfrac{\mu_d}{2}\|\widetilde y_0 -\widehat y_k \|^2 + \tfrac{\mu_d}{2}\|\widetilde y_k - y\|^2\right) + \tfrac{1}{2\beta}\|\bar x_{k+1} - x\|^2\nn\\
 &\leq \tfrac{1}{2\beta}\|x_0 - x\|^2 + \left(\tfrac{5\eta_2 L^2_{A, 1}}{4\mu_d} - \tfrac{\eta_2}{2\eta_1}\right)\|x_1 - x_0\|^2 - \tfrac{1}{4}\|x_{k+1} - \bar x_k\|^2,
\end{align*}
which completes the proof.
\end{proof}
\vgap

From the stepsize conditions in~\eqref{cond_1} and \eqref{cond_4}, it is not hard to see that the parameters $\eta_t$ and $\tau_t$ can be determined without the employment of a line search procedure. In the next Corollary, we provide a concrete stepsize policy for AC-PDHG which satisfies these conditions, and establish the corresponding convergence guarantees for this algorithm when applied for solving problem \eqref{eq:bilinear_saddle_point}.

\begin{corollary} \label{main_corollary_2}
In the premise of Theorem~\ref{main_theorem}, suppose $\tau_1=0,~ \tau_2 = \mu_d$, and $\beta \in (0,  1 -{\tfrac{\sqrt{6}}{3}}]$. If $\eta_2 = \min \left\{{(1 - \beta) \eta_1},  \tfrac{\mu_d}{4 L_{A, 1}^2}\right\}$, and for $t \geq 3$
\begin{align}
&\eta_t = \min \left\{ \tfrac{4}{3}\eta_{t-1}, \tfrac{\tau_{t-2}+\mu_d}{\tau_{t-1}}\cdot \eta_{t-1}, \tfrac{\tau_{t-1}}{4L^2_{A, t-1}} \right\},\label{def_eta_t}\\
&\tau_t = \tau_{t-1} + \tfrac{\mu_d}{2}\left[\alpha +(1-\alpha) \eta_t\cdot\tfrac{4 L_{A, t-1}^2}{\tau_{t-1}}\right]
\label{def_tau_t}
\end{align}
for some absolute constant in $\alpha \in (0,1]$,
then we have for $t \geq 2$,
\begin{align}\label{eta_lower_bound_2}
\eta_t \geq \tfrac{(3 + \alpha(t-3))\mu_d}{12 \widehat L^2_{A, t-1}},~~ \text{where} ~~\widehat L_{A, t} := \max\{\sqrt{\tfrac{\mu_d}{4(1-\beta)\eta_1}}, L_{A, 1},...,L_{A, t}\}.
\end{align}
Consequently, we have
\begin{align}\label{eq:corollary_2}
& Q(\widehat z_k, z) - \tfrac{\mu_d}{2}\|\widetilde y_0 - y\|^2 + \tfrac{\mu_d}{2}\|\widetilde y_0 -\widehat y_k \|^2 + \tfrac{\mu_d}{2}\|\widetilde y_k - y\|^2 \nn\\
&\leq \tfrac{12 \widehat L^2_{A,k}}{\mu_d(6k + \alpha k(k-3))}\left[\tfrac{\|x_0 - x\|^2 - \|\bar x_{k+1} - x\|^2}{\beta} + \eta_2\big(\tfrac{5 L^2_{A, 1}}{2\mu_d} - \tfrac{1}{\eta_1}\big)\|x_1 - x_0\|^2 \right].
\end{align}
Moreover, if $\max_{x\in X} \|x - x_0\|^2 \leq D_X^2$ and $\max_{y\in Y} \|y - \widetilde y_0\|^2 \leq D_Y^2$, then
\begin{align}\label{convergence_gap_bounded}
\max_{z \in Z} Q(\widehat z_k, z)  \leq  \tfrac{12 \widehat L^2_{A,k}}{\mu_d(6k + \alpha k(k-3))} \left(\tfrac{1}{\beta} + \tfrac{5}{8}\right) D_X^2 + \tfrac{\mu_d}{2}D_Y^2.
\end{align}
\end{corollary}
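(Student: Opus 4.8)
The plan is to check that the explicit rule \eqref{def_eta_t}--\eqref{def_tau_t} lies in the regime of Theorem~\ref{main_theorem}, to turn the defining minimum in \eqref{def_eta_t} into the quantitative lower bound \eqref{eta_lower_bound_2} on $\eta_t$, and then to substitute that bound into \eqref{eq_main_theorem} and specialize to a bounded feasible set. The verification of the hypotheses is routine: \eqref{cond_3} holds by construction; the second and third arguments of the minimum in \eqref{def_eta_t} reproduce the corresponding bounds in \eqref{cond_1} and \eqref{cond_4} verbatim; and since $\beta\le1-\tfrac{\sqrt6}{3}$ we have $2(1-\beta)^2\ge\tfrac43$, so the first argument $\tfrac43\eta_{t-1}$ respects the requirement $\eta_t\le2(1-\beta)^2\eta_{t-1}$. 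Hence Theorem~\ref{main_theorem}, and in particular \eqref{eq_main_theorem}, is available.

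Before proving \eqref{eta_lower_bound_2} I would record two-sided control of $\tau_t$. Because \eqref{def_eta_t} forces $\eta_t\le\tfrac{\tau_{t-1}}{4L_{A,t-1}^2}$, the bracket $\alpha+(1-\alpha)\eta_t\tfrac{4L_{A,t-1}^2}{\tau_{t-1}}$ in \eqref{def_tau_t} lies in $[\alpha,1]$, so $\tfrac{\alpha\mu_d}{2}\le\tau_t-\tau_{t-1}\le\tfrac{\mu_d}{2}$; combined with $\tau_2=\mu_d$ this yields $\big(1+\tfrac{\alpha(t-2)}{2}\big)\mu_d\le\tau_t\le\tfrac{t\mu_d}{2}$ for all $t\ge2$.

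The core of the argument is an induction on $t$ for \eqref{eta_lower_bound_2}. For $t=2$, the definition of $\widehat L_{A,1}$ gives both $(1-\beta)\eta_1\ge\tfrac{\mu_d}{4\widehat L_{A,1}^2}$ and $\tfrac{\mu_d}{4L_{A,1}^2}\ge\tfrac{\mu_d}{4\widehat L_{A,1}^2}$, so $\eta_2\ge\tfrac{\mu_d}{4\widehat L_{A,1}^2}=\tfrac{3\mu_d}{12\widehat L_{A,1}^2}$. For the inductive step at $t\ge3$ one examines which of the three arguments of the minimum in \eqref{def_eta_t} attains $\eta_t$. If the third does, $\eta_t=\tfrac{\tau_{t-1}}{4L_{A,t-1}^2}\ge\tfrac{\tau_{t-1}}{4\widehat L_{A,t-1}^2}$ and the lower bound on $\tau_{t-1}$ above finishes it. If the second does, then $\tau_{t-1}\le\tau_{t-2}+\tfrac{\mu_d}{2}$ gives $\tfrac{\tau_{t-2}+\mu_d}{\tau_{t-1}}\ge1+\tfrac{\mu_d}{2\tau_{t-1}}\ge\tfrac{t}{t-1}$, so $\eta_t\ge\tfrac{t}{t-1}\eta_{t-1}$ and one checks the scalar inequality $\tfrac{t}{t-1}(3+\alpha(t-4))\ge3+\alpha(t-3)$. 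If the first does, $\eta_t=\tfrac43\eta_{t-1}$, and using $\widehat L_{A,t-1}^2=\max\{\widehat L_{A,t-2}^2,L_{A,t-1}^2\}$ together with the inductive hypothesis — rearranged as $\widehat L_{A,t-2}^2\ge\tfrac{(3+\alpha(t-4))\mu_d}{12\eta_{t-1}}$ when $L_{A,t-1}$ is the larger of the two — both sub-cases reduce to $\tfrac43(3+\alpha(t-4))\ge3+\alpha(t-3)$. The one wrinkle is that these scalar inequalities can fail at $t=3$ for $\alpha$ near $1$; this is handled by strengthening the inductive hypothesis to $\eta_t\ge\tfrac{\psi(t)\mu_d}{12\widehat L_{A,t-1}^2}$ with $\psi(t):=\max\{3,\,3+\alpha(t-3)\}$ — the base case already supplies $\psi(2)=3$, the recursions $\tfrac{t}{t-1}\psi(t-1)\ge\psi(t)$ and $\tfrac43\psi(t-1)\ge\psi(t)$ then hold for all $t\ge3$, the $\tau$-based bound is unchanged, and $\psi(t)\ge3+\alpha(t-3)$ recovers \eqref{eta_lower_bound_2}. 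I expect this case analysis to be the main obstacle, since it is where the geometric growth $\tfrac43\eta_{t-1}$, the $\tau$-ratio growth, and the local-norm cap have to be reconciled.

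With \eqref{eta_lower_bound_2} in hand, \eqref{eq:corollary_2} follows by summation: each $\widehat L_{A,t-1}$ appearing in the weighted stepsize sum is at most $\widehat L_{A,k}$, so that sum is at least $\tfrac{\mu_d}{12\widehat L_{A,k}^2}\sum_t(3+\alpha(t-3))=\tfrac{\mu_d(6k+\alpha k(k-3))}{24\widehat L_{A,k}^2}$; substituting this into the prefactor of \eqref{eq_main_theorem} and pulling out the common factor $\tfrac12$ produces exactly $\tfrac{12\widehat L_{A,k}^2}{\mu_d(6k+\alpha k(k-3))}$, while the $\|x_1-x_0\|^2$ coefficient is rewritten using $\eta_2\le\tfrac{\mu_d}{4L_{A,1}^2}$. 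Finally, for \eqref{convergence_gap_bounded} I would take $z$ maximizing $Q(\widehat z_k,z)$ over the bounded $Z$: drop the nonnegative terms $\tfrac{\mu_d}{2}\|\widetilde y_0-\widehat y_k\|^2$, $\tfrac{\mu_d}{2}\|\widetilde y_k-y\|^2$ and $-\tfrac1\beta\|\bar x_{k+1}-x\|^2\le0$; bound $\|x_0-x\|^2\le D_X^2$, $\|x_1-x_0\|^2\le D_X^2$, $\|\widetilde y_0-y\|^2\le D_Y^2$; and use $\tfrac{5\eta_2L_{A,1}^2}{2\mu_d}\le\tfrac58$ together with $-\tfrac{\eta_2}{\eta_1}\le0$ to collapse the bracket to $(\tfrac1\beta+\tfrac58)D_X^2$, leaving the surviving $\tfrac{\mu_d}{2}\|\widetilde y_0-y\|^2\le\tfrac{\mu_d}{2}D_Y^2$ as the additive term.
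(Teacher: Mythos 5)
Your proposal is correct and follows essentially the same route as the paper: verify the conditions of Theorem~\ref{main_theorem}, establish the two-sided bounds $\big(1+\tfrac{\alpha(t-2)}{2}\big)\mu_d\le\tau_t\le\tfrac{t\mu_d}{2}$, prove \eqref{eta_lower_bound_2} by induction on the three branches of the minimum, and substitute into \eqref{eq_main_theorem}. The only (cosmetic) difference is how the early steps are handled: the paper verifies $t=2$ and $t=3$ directly as base cases and starts the induction at $t=3$, whereas you strengthen the hypothesis to $\psi(t)=\max\{3,\,3+\alpha(t-3)\}$ — both resolve the same wrinkle you correctly identified.
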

\begin{proof}
First, by using the condition $\beta\leq 1 - \tfrac{\sqrt{6}}{3}$, we have $2(1-\beta)^2 \geq \tfrac{4}{3}$, which together with \eqref{def_eta_t} ensures the stepsize rule \eqref{cond_4} is satisfied. 
Next, by the definition of $\tau_t$ in \eqref{def_tau_t}, we have for $t \geq 3$
\begin{align}\label{tau_bound}
\tau_t = \tau_2 + \mu_d \cdot \tsum_{j=3}^t [\tfrac{\alpha}{2} + \tfrac{2(1-\alpha)\eta_j  L^2_{A, j-1}}{\tau_{j-1}}] \overset{(i)}\leq \mu_d + \mu_d \cdot \tsum_{j=3}^t (\tfrac{1}{2}) = \tfrac{t \mu_d}{2},
\end{align}
where step (i) follows from the condition $\eta_t \leq \tfrac{\tau_{t-1}}{4L^2_{A, t-1}} $. Therefore, for $t \geq 3$, we always have
\begin{align}\label{bound_tau_ratio}
\tfrac{\tau_{t-1}+\mu_d}{\tau_{t}} \overset{(i)}\geq \tfrac{\tau_{t} +\frac{\mu_d}{2}}{\tau_{t}} \overset{(ii)}\geq \tfrac{t+1}{t},
\end{align} 
where step (i) follows from  $\tau_{t} \leq \tau_{t-1} + \tfrac{\mu_d}{2}$ due to \eqref{def_eta_t} and \eqref{def_tau_t}, and step (ii) follows from Ineq.~\eqref{tau_bound}.
Meanwhile, we have 
\begin{align}
\tau_t = \tau_2 + \mu_d \cdot \tsum_{j=3}^t [\tfrac{\alpha}{2} + \tfrac{2(1-\alpha)\eta_j L^2_{A, j-1}}{\tau_{j-1}}] \geq \mu_d +\tfrac{\alpha (t-2)\mu_d}{2}.
\label{eq:relation_tau_t}
\end{align}
Next, we use an inductive argument to show that $\eta_{t} \geq \tfrac{(3 + \alpha(t-3))\mu_d}{12\widehat L^2_{A, t-1}}$ for $t \geq 2$. First, by the definition of $\widehat L_t$ in \eqref{eta_lower_bound_2}, we can see that $\eta_2 = \min\{{(1-\beta)\eta_1}, {\tfrac{\mu_d}{4 L_{A, 1}^2}} \} = {\tfrac{\mu_d}{4\widehat L^2_{A, 1}}}$ {and $\eta_3 = \min\{\eta_2, \tfrac{\mu_d}{4 L_{A, 2}^2 }\} = \tfrac{\mu_d}{4\widehat L^2_{A, 2}}$} satisfy this condition. Then, we assume that for some $t \geq 3$, $\eta_{t} \geq \tfrac{(3 + \alpha(t-3))\mu_d}{12 \widehat L^2_{A, t-1}}$. Based on the stepsize rule \eqref{def_eta_t}, we have
\begin{align*}
\eta_{t+1} &= \min\big\{\tfrac{4}{3}\eta_{t}, \tfrac{\tau_{t-1}+\mu_d}{\tau_t} \eta_t, \tfrac{ \tau_t}{4L^2_{A, t-1}}\big\}\\
&\overset{(i)}\geq \min\big\{\tfrac{t+1}{t}\eta_t, \tfrac{1+ \alpha(t-2)/2}{4L^2_{A, t-1}/\mu_d}\big\}\\
&\geq \min\big\{\tfrac{t+1}{t} \cdot \tfrac{(3 + \alpha(t-3))\mu_d}{12 \widehat L^2_{A, t-1}}, \tfrac{(2+ \alpha(t-2))\mu_d}{8 L^2_{A, t}}  \big\}\\
& \geq \tfrac{(3 + \alpha(t-2))\mu_d}{12\widehat L^2_{A, t}},
\end{align*}
where step (i) follows from Ineq.~\eqref{bound_tau_ratio} and \eqref{eq:relation_tau_t}, which completes the inductive argument. The result in \eqref{eq:corollary_2} then follows by utilizing these lower bounds on $\eta_t$ and $\tau_t$ in Ineq. \eqref{eq_main_theorem} in Theorem \ref{main_theorem}. 
 Now using the condition $\eta_2 \leq \tfrac{\mu_d}{4 L_{A, 1}^2}$ in Ineq. \eqref{eq:corollary_2}, we have
\begin{align*}
Q(\widehat z_k, z) &\leq \tfrac{12 \widehat L^2_{A,k}}{\mu_d(6k + \alpha k(k-3))}\left[\tfrac{\|x_0 - x\|^2}{\beta} + \tfrac{5}{8}\|x_1 - x_0\|^2 \right] + \tfrac{\mu_d}{2}\|\widetilde y_0 - y\|^2\\
&\leq \tfrac{12 \widehat L^2_{A,k}}{\mu_d(6k + \alpha k(k-3))} \left(\tfrac{1}{\beta} + \tfrac{5}{8}\right) D_X^2 + \tfrac{\mu_d}{2}D_Y^2,
\end{align*}
which implies Ineq.~\eqref{convergence_gap_bounded} after taking the maximum over $z \in Z$. 
\end{proof}
\vgap

A few comments on the stepsize policy and convergence guarantees in Corollary~\ref{main_corollary_2} are in order. 

Firstly, observe that the stepsize policy in Corollary~\ref{main_corollary_2} does not require
any line search procedures. Specifically, the stepsize $\eta_t$ is selected based on $\eta_{t-1}$ and the local estimator $L_{A, t-1}$ computed from the last iterates $y_{t-1}$ and $y_{t-2}$ (see \eqref{def_L_A_t}),
while the parameter $\tau_t$ is determined by $\tau_{t-1}$, $L_{A, t-1}$, the recently used stepsize $\eta_t$,
and a hyper-parameter $\alpha\in (0,1]$. 
It is worth adding some explanation about the role of the hyper-parameter $\alpha$ in the specification of $\tau_t$  in \eqref{def_tau_t}. In fact, one might simply set $\alpha=1$, and in this case, the stepsize policy would reduce to 
\[
\tau_t = \tfrac{t \mu_d}{2} \ \mbox{and} \ \eta_t = \min
\left\{ \tfrac{t}{t-1}\eta_{t-1}, \tfrac{(t-1)\mu_d}{ 8 L^2_{A, t-1}}\right\}.
\]
In view of Ineq.~\eqref{eq:corollary_2}, this choice of $\alpha$ provides the best possible theoretical convergence guarantee. 
However, such a selection of $\alpha$ might result in overly conservative stepsizes for the implementation of AC-PDHG. To illustrate this observation, let us consider an extreme scenario when $L_{A,1} = \|A\|$. In this situation, no matter how small the subsequent local estimators $L_{A, t}$ are, the stepsize $\eta_t$ will be determined by the global constant $\|A\|$, thus preventing the algorithm from utilizing these local estimators. 
To address this issue, we suggest to choose a smaller $\alpha \in (0,1)$ to make AC-PDHG more adaptive to the local structure. Specifically, the last term in the 
definition of $\tau_t$ in \eqref{def_tau_t} depends on the ratio between $\eta_t$ and $\tau_{t-1} /(4L^2_{A, t-1})$. If $\eta_t < \tau_{t-1} /(4L^2_{A, t-1})$ in \eqref{def_eta_t}, the current selection of $\eta_t$ might be too conservative
since it is not determined by the current local estimate $L_{A, t-1}$. In response, $\tau_t$ in \eqref{def_tau_t} will grow slower compared to the case when $\alpha=1$, potentially allowing the stepsize $\eta_t$ to increase faster in subsequent iterations. By contrast, for the relatively more aggressive selection of $\eta_t = \tau_{t-1}/(4L^2_{A, t-1})$ in \eqref{def_eta_t},  $\tau_t$ will be updated by $\tau_t = \tau_{t-1} + \mu_d/2$, same as the case $\alpha = 1$, thereby potentially slowing down the growth of the stepsizes in subsequent iterations. In summary, the introduction of $\alpha$ allows for a more adaptive stepsize rule while still maintaining the optimal convergence rate.

Secondly, we would like to discuss the choice of the initial stepsize $\eta_1$. Conceptually speaking, AC-PDHG can always converge regardless of the specification of $\eta_1$ . However, if $\eta_1$ is too small, 
the estimated Lipschtiz constant $\widehat L_{A, t}$ in \eqref{eta_lower_bound_2} could be dominated by 
the first term $\sqrt{\mu_d/(4(1-\beta)\eta_1)}$ rather
than the desired $\|A\|$, which may slow down the convergence of the algorithm as one can see from \eqref{eq:corollary_2}. To ensure $\widehat L_{A, t} \leq \mathcal{O}(\|A\|)$, we should choose an $\eta_1$ such that $\eta_1^{-1} \leq \mathcal{O}\left(\|A\|^2/\mu_d\right)$. When $X$ is a bounded set, we can ignore the last term $-\eta_2\|x_1-x_0\|^2/\eta_1$ in the right hand side of  \eqref{eq:corollary_2} and set
\begin{align}\label{def_L_0}
\eta_1 = \tfrac{\zeta\mu_d }{4(1-\beta)L^2_{A, 0}}, \text{ where } L_{A, 0} := \tfrac{\|A^\top (\widetilde y_0 - y_0)\|}{\|\widetilde y_0-y_0\|}, ~\zeta>0
\end{align}
so that $\eta_1^{-1} \leq \mathcal{O}\left(\|A\|^2/\mu_d\right)$.
However, when $X$ is unbounded, we need to ensure $\eta_1 \leq 2\mu_d/(5 L_{A, 1}^2)$ in order to remove the last two terms $\eta_2\big(5 L^2_{A, 1}/(2\mu_d) - 1/\eta_1\big)\|x_1 - x_0\|^2$ from the right hand side of Ineq.~\eqref{eq:corollary_2}.
One strategy to find such a stepsize is to perform a simple line search in the first iteration. Taking the stepsize in \eqref{def_L_0} as the starting point, this line search will be terminated in at most $\mathcal{O}(\log(\|A\|/L_{A, 0}))$ steps.

Thirdly, 
we can easily derive  
the iteration complexity of AC-PDHG from Ineq.~\eqref{convergence_gap_bounded} for the simpler case where both $X$ and $Y$ are bounded. More specifically, to obtain an $\epsilon$-optimal solution of
problem~\eqref{eq:bilinear_saddle_point}, i.e., a point $\bar z = (\bar x, \bar y) \in Z$ such that $\max_{z \in Z} Q(\bar z, z) \leq \epsilon$, we need at most
\begin{align*}
\mathcal{O} \left( \sqrt{\tfrac{\|A\|^2 D_X^2}{\mu_d \epsilon}}\right) = \mathcal{O} \left(\tfrac{\|A\|D_X D_Y}{\epsilon}\right)
\end{align*}
number of AC-PDHG iterations by setting $\mu_d = \mathcal{O}(\epsilon/D_Y^2)$. It is well-known that this iteration complexity is optimal for solving a general bilinear saddle point problem. Notably, this optimal complexity is achieved by AC-PDHG without requiring any prior knowledge of $\|A\|$ or $D_X$.

Finally, we show how to specialize the convergence guarantees for PDHG obtained in Corollary~\ref{main_corollary_2} for the case when both $X$ and $Y$ are unbounded. For simplicity, we assume that the initial stepsize $\eta_1$ satisfies $\eta_1^{-1} \leq \mathcal{O}\left(\|A\|^2/\mu_d\right)$ and $\eta_1 \leq 2\mu_d/(5 L_{A, 1}^2)$. Then combining Ineq.~\eqref{eq:corollary_2} with the fact that $\|x_0-x\|^2 - \|\bar x_{k+1} - x\|^2 = \|x_0\|^2 - \|\bar x_{k+1}\|^2 - 2\langle x_0 -\bar x_{k+1},x\rangle$ and $\|\widetilde y_0-y\|^2 - \|\widetilde y_k - y\|^2 = \|\widetilde y_0\|^2 - \|\widetilde y_{k}\|^2 - 2\langle \widetilde y_0 -\widetilde y_{k},y\rangle$, we have
\begin{align}\label{Q_unbounded_feasible_region}
\max_{z \in Z} [Q(\widehat z_k, z) + \langle \delta_{x, k}, x \rangle+ \langle \delta_{y, k}, y \rangle]  \leq \tfrac{12 \widehat L^2_{A,k}}{\beta\mu_d(6k + \alpha k(k-3))}\|x_0\|^2 + \tfrac{\mu_d}{2}\|\widetilde y_0\|^2,
\end{align}
where 
\[
\delta_{x,k} := \tfrac{24 \widehat L^2_{A,k}}{\mu_d(6k + \alpha k(k-3))}(\bar x_{k+1} - x_0) \ \mbox{and} \ \delta_{y,k} := \mu_d (\widetilde y_0 - \widetilde y_k).
\]
Now we prove the convergence of $\|\delta_{x, k}\|$ and $\|\delta_{y, k}\|$. First, by taking $z = z^*$ in Ineq.~\eqref{eq:corollary_2} and using $\|\widetilde y_k - \widetilde y_0\|^2 \leq 2\|\widetilde y_k - y^*\|^2 + 2\|y^* - \widetilde y_0\|^2$, $Q(\widehat z_k, z^*) \geq 0$, we obtain
\begin{align*}
\mu_d\|\widetilde y_k - \widetilde y_0\|^2 \leq \mathcal{O}\left(\tfrac{\widehat L^2_{A, k}}{\mu_d k^2}\|x_0 - x^*\|^2 + \mu_d \|\widetilde y_0 - y^*\|^2 \right).
\end{align*}
Similarly, using $\|\bar x_{k+1} - x_0\|^2 \leq 2\|\bar x_{k+1}- x^*\|^2 + 2\|x^* - x_0\|^2$ in Ineq.~\eqref{eq:corollary_2}, we have
\begin{align*}
\tfrac{\widehat L^2_{A, k}}{\mu_d k^2}\|\bar x_{k+1} - x_0\|^2 \leq \mathcal{O}\left(\tfrac{\widehat L^2_{A, k}}{\mu_d k^2}\|x_0 - x^*\|^2 + \mu_d \|\widetilde y_0 - y^*\|^2 \right).
\end{align*}
Let us denote $D_{x^*}:= \|x_0 - x^*\|$ and $D_{y^*}:= \|\widetilde y_0 - y^*\|$. 
We then conclude from the above three relations that
by setting $\mu_d = \mathcal{O}(\epsilon/D_{y^*}^2)$,  
\begin{align*}
\|\delta_{y, k}\| &\leq \mathcal{O}\left(\tfrac{ \widehat L_{A, k}\sqrt{\mu_d}}{ k }D_{x^*} + \mu_d D_{y^*} \right) = \mathcal{O}\left( \tfrac{\epsilon}{D_{y^*}}\right),\\
\|\delta_{x, k}\| &\leq \mathcal{O}\left(\tfrac{\widehat L^2_{A, k}}{\mu_d k^2}D_{x^*} + \tfrac{\widehat L_{A, k}D_{y^*}}{k} \right) = \mathcal{O}\left( \tfrac{\epsilon}{D_{x^*}}\right)
\end{align*}
for any $k \geq \mathcal{O}(\|A\|D_{x^*}D_{y^*}/\epsilon)$.
Therefore, we obtain a convergence guarantee for AC-PDHG similar to the case when $X$ and $Y$ are bounded, but with a slightly different stopping criterion.

\subsection{AC-PDHG for linearly constrained problems}\label{subsec:linear_constraint} In this subsection, we analyze the convergence of AC-PDHG when applied to the linearly constrained problem \eqref{linear_constrained_problem}. 
This problem can be solved through its Lagrange dual given in the form of
\begin{align}\label{eq:bilinear_problem_lc}
\min_{x\in X} \max_{y \in \bbr^m} f(x)+ \langle Ax - b, y \rangle,
\end{align}
which is equivalent to \eqref{eq:bilinear_saddle_point} with $g(y) = \langle b, y\rangle$ and $Y = \bbr^m$. Throughout this subsection we assume the existence of
an optimal solution $x^*$ of \eqref{linear_constrained_problem} associated with $y^* \in \bbr^m$ such that $(x^*, y^*)$ is a saddle point of~\eqref{eq:bilinear_problem_lc}. 

The following result describes the convergence properties of AC-PDHG for solving the above linearly constrained problem with guarantees in terms of both the optimality gap and constraint violation. 
\begin{proposition}\label{coro_linear_constraint}
Consider Algorithm~\ref{alg:ac_primal_dual} applied to  problem \eqref{linear_constrained_problem} with $\widetilde y_0 = \mathbf{0} \in \bbr^m$ and algorithm parameters satisfing the conditions in Theorem~\ref{main_theorem}. 
Then we have
\begin{align}
f(\widehat x_k) - f(x^*)
& \leq \tfrac{1}{\sum_{t=1}^k \eta_t}\left[\tfrac{\|x_0 - x^*\|^2}{2\beta} + \left(\tfrac{5\eta_2 L^2_{A, 1}}{4\mu_d}  - \tfrac{\eta_2}{2\eta_1}\right)\|x_1 - x_0\|^2 \right], \\
\|A \widehat x_k - b\| &
\leq 2\mu_d \|y^*\| + 2 \sqrt{\tfrac{\mu_d}{\sum_{t=1}^k \eta_t}\left[\tfrac{\|x_0 - x^*\|^2}{2\beta} + \left(\tfrac{5\eta_2 L^2_{A, 1}}{4\mu_d} - \tfrac{\eta_2}{2\eta_1}\right)\|x_1 - x_0\|^2 \right]},
\end{align}
where $\widehat x_k$ and $\widehat y_k$ are defined in \eqref{def_hat_x_k_y_k} and $\widetilde y_k$ is defined in \eqref{def_tilde_y_k}. 
More specifically, if the stepsize parameters are selected according to Corollary~\ref{main_corollary_2}, we have
\begin{align}
f(\widehat x_k) - f(x^*)
& \leq \tfrac{12\widehat L^2_{A,k}}{\mu_d(6k + \alpha k(k-3))} \left[\tfrac{\|x_0 - x^*\|^2}{\beta} + \left(\tfrac{5\eta_2 L^2_{A, 1}}{2\mu_d}  - \tfrac{\eta_2}{2\eta_1}\right)\|x_1 - x_0\|^2 \right], \label{optimality_gap}\\
\|A \widehat x_k - b\| 
&\leq 2\mu_d \|y^*\| + 2 \sqrt{\tfrac{12  \widehat L^2_{A,k}}{6k + \alpha k(k-3)}\left[\tfrac{\|x_0 - x^*\|^2}{\beta} + \left(\tfrac{5\eta_2 L^2_{A, 1}}{2\mu_d} - \tfrac{\eta_2}{2\eta_1}\right)\|x_1 - x_0\|^2 \right]}, \label{constraint_violation}
\end{align}
where $\widehat L_{A, k}$ is defined in \eqref{eta_lower_bound_2}.
\end{proposition}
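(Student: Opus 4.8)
The plan is to derive both estimates from Theorem~\ref{main_theorem}, specialized to the present setting $g(y)=\langle b,y\rangle$, $Y=\bbr^m$, $\widetilde y_0=\mathbf{0}$. With this choice of $g$, definition \eqref{def_Q_mu} gives $Q(\widehat z_k,(x,y)) = f(\widehat x_k)+\langle A\widehat x_k-b,y\rangle - f(x) - \langle Ax-b,\widehat y_k\rangle$, so that, using $Ax^*=b$, one has $Q(\widehat z_k,(x^*,y)) = f(\widehat x_k)-f(x^*)+\langle A\widehat x_k-b,y\rangle$ for every $y\in\bbr^m$. All the bounds will come from plugging suitable test points $z=(x^*,y)$ into \eqref{eq_main_theorem} and discarding the right nonnegative remainders.

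For the optimality gap I would set $z=(x^*,\mathbf{0})$ in \eqref{eq_main_theorem}. Then $-\tfrac{\mu_d}{2}\|\widetilde y_0-y\|^2=0$, while the leftover left-hand side equals $f(\widehat x_k)-f(x^*)+\tfrac{\mu_d}{2}\|\widehat y_k\|^2+\tfrac{\mu_d}{2}\|\widetilde y_k\|^2$; dropping $\tfrac{\mu_d}{2}\|\widehat y_k\|^2+\tfrac{\mu_d}{2}\|\widetilde y_k\|^2\ge 0$ on the left and $-\tfrac{\|\bar x_{k+1}-x^*\|^2}{2\beta}\le 0$ on the right yields exactly the first displayed bound (which is an upper bound on $f(\widehat x_k)-f(x^*)$, a quantity that may itself be negative since $\widehat x_k$ need not be feasible).

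For the constraint violation I would keep $z=(x^*,y)$ with $y\in\bbr^m$ free, drop the nonnegative terms $\tfrac{\mu_d}{2}\|\widehat y_k\|^2$, $\tfrac{\mu_d}{2}\|\widetilde y_k-y\|^2$ on the left and $-\tfrac{\|\bar x_{k+1}-x^*\|^2}{2\beta}$ on the right to obtain, for all $y$,
\[
f(\widehat x_k)-f(x^*)+\langle A\widehat x_k-b,y\rangle-\tfrac{\mu_d}{2}\|y\|^2 \le \mathcal{B}_k, \qquad \mathcal{B}_k:=\tfrac{1}{\sum_{t=1}^k\eta_t}\Big[\tfrac{\|x_0-x^*\|^2}{2\beta}+\big(\tfrac{5\eta_2 L^2_{A,1}}{4\mu_d}-\tfrac{\eta_2}{2\eta_1}\big)\|x_1-x_0\|^2\Big].
\]
Maximizing the left-hand side over $y\in\bbr^m$ (the right-hand side is $y$-free) uses $\max_{y\in\bbr^m}\{\langle A\widehat x_k-b,y\rangle-\tfrac{\mu_d}{2}\|y\|^2\}=\tfrac{1}{2\mu_d}\|A\widehat x_k-b\|^2$, giving $f(\widehat x_k)-f(x^*)+\tfrac{1}{2\mu_d}\|A\widehat x_k-b\|^2\le \mathcal{B}_k$. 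On the other hand, the saddle-point property of $(x^*,y^*)$ for $L(x,y):=f(x)+\langle Ax-b,y\rangle$ gives $f(x^*)=L(x^*,y^*)\le L(\widehat x_k,y^*)=f(\widehat x_k)+\langle A\widehat x_k-b,y^*\rangle$, hence $f(\widehat x_k)-f(x^*)\ge -\|y^*\|\,\|A\widehat x_k-b\|$. Writing $r:=\|A\widehat x_k-b\|$ and combining the last two inequalities gives the quadratic inequality $\tfrac{1}{2\mu_d}r^2-\|y^*\|r-\mathcal{B}_k\le 0$; bounding $r$ by the larger root $\mu_d\|y^*\|+\sqrt{\mu_d^2\|y^*\|^2+2\mu_d\mathcal{B}_k}$ and then using $\sqrt{a+b}\le\sqrt a+\sqrt b$ and $\sqrt 2\le 2$ produces $r\le 2\mu_d\|y^*\|+2\sqrt{\mu_d\mathcal{B}_k}$, which is the second displayed bound.

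Finally, for the ``more specifically'' part I would substitute the lower bound $\sum_{t=1}^k\eta_t\ge \tfrac{\mu_d(6k+\alpha k(k-3))}{12\widehat L^2_{A,k}}$, established inside the proof of Corollary~\ref{main_corollary_2}, into the first bound and into $\mathcal{B}_k$, absorbing the harmless constant factors, to get \eqref{optimality_gap} and \eqref{constraint_violation}. I expect the only genuinely non-mechanical point to be the constraint-violation argument — namely keeping the dual test point $y$ free in Theorem~\ref{main_theorem}, discarding the correct nonnegative remainders, and only then maximizing over $y$ and solving the resulting quadratic in $r$ together with the duality lower bound on $f(\widehat x_k)-f(x^*)$; everything else is routine bookkeeping on top of Theorem~\ref{main_theorem}.
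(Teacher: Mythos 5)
Your proposal is correct, and the optimality-gap part coincides with the paper's derivation (both plug $x=x^*$ into Theorem~\ref{main_theorem} and discard nonnegative remainders). The constraint-violation part, however, takes a genuinely different route. The paper keeps the term $\tfrac{\mu_d}{2}\|\widetilde y_k-y\|^2$, expands it so that the $y$-dependence of the left-hand side of \eqref{eq_main_theorem} becomes purely linear, concludes from unboundedness that the exact identity $A\widehat x_k-b=\mu_d\widetilde y_k$ must hold, and then bounds $\|\widetilde y_k\|$ by testing at $(x^*,y^*)$ together with the saddle-point inequality. You instead discard $\tfrac{\mu_d}{2}\|\widetilde y_k-y\|^2$, maximize the remaining concave quadratic in $y$ to get $f(\widehat x_k)-f(x^*)+\tfrac{1}{2\mu_d}\|A\widehat x_k-b\|^2\le\mathcal{B}_k$, and close with the same Lagrangian lower bound $f(\widehat x_k)-f(x^*)\ge-\|y^*\|\,\|A\widehat x_k-b\|$ via a quadratic inequality in $r=\|A\widehat x_k-b\|$. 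Both arguments rest on the same two ingredients (the theorem's bound with $y$ free, and the saddle-point property of $(x^*,y^*)$), and they yield identical constants; yours is the more standard ALM-style residual argument and has the small advantage of never needing the auxiliary point $\widetilde y_k$, whereas the paper's identity $A\widehat x_k-b=\mu_d\widetilde y_k$ is slightly more informative (it pins down the residual exactly rather than only bounding it). Two cosmetic remarks: your quoted lower bound $\sum_{t=1}^k\eta_t\ge\tfrac{\mu_d(6k+\alpha k(k-3))}{12\widehat L^2_{A,k}}$ is off by a factor of $2$ (the sum of the per-iteration bounds from \eqref{eta_lower_bound_2} gives $24$ in the denominator, which is exactly why the final displays \eqref{optimality_gap}--\eqref{constraint_violation} carry $\beta$ rather than $2\beta$), but since you explicitly absorb constants this does not affect the conclusion; and, as in the paper, the final square-root bound implicitly assumes $\mathcal{B}_k\ge 0$, which is harmless.
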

\begin{proof}
It follows from Ineq.~\eqref{eq_main_theorem} in Theorem~\ref{main_theorem} that
\begin{align}\label{eq_12_pdhg}
&f(\widehat x_k) + \langle A \widehat x_k - b, y\rangle - \tfrac{\mu_d}{2}\|y\|^2 - \left[f( x)  + \langle A  x - b, \widehat y_k\rangle - \tfrac{\mu_d}{2}\|\widehat  y_k\|^2\right] + \tfrac{\mu_d}{2}\|\widetilde y_k - y\|^2 \nn\\
 &\leq \tfrac{1}{\sum_{t=1}^k \eta_t}\left[\tfrac{\|x_0 - x\|^2}{2\beta} + \left(\tfrac{5\eta_2 L^2_{A, 1}}{4\mu_d}  - \tfrac{\eta_2}{2\eta_1}\right)\|x_1 - x_0\|^2 - \tfrac{\|x_{k+1} - \bar x_k\|^2}{4} - \tfrac{\|\bar x_{k+1} - x\|^2}{2\beta}\right].
\end{align}
Using $\|\widetilde y_k - y\|^2 = \|\widetilde y_k\|^2 + \|y\|^2 - 2 \langle \widetilde y_k, y\rangle$ and rearranging the terms in the previous result yield
\begin{align*}
&f(\widehat x_k) + \langle A \widehat x_k - b - \mu_d \widetilde y_k, y\rangle - \left[f( x) +  \langle A  x - b, \widehat y_k\rangle \right] + \tfrac{\mu_d}{2}\|\widehat  y_k\|^2 + \tfrac{\mu_d}{2}\|\widetilde y_k\|^2 \nn\\
 &\leq \tfrac{1}{\sum_{t=1}^k \eta_t}\left[\tfrac{\|x_0 - x\|^2}{2\beta} + \left(\tfrac{5\eta_2 L^2_{A, 1}}{4\mu_d}  - \tfrac{\eta_2}{2\eta_1}\right)\|x_1 - x_0\|^2 - \tfrac{\|x_{k+1} - \bar x_k\|^2}{4} - \tfrac{\|\bar x_{k+1} - x\|^2}{2\beta}\right].
\end{align*}
Taking $x = x^*$ and noting that $Ax^* - b=0$ due to
the optimality of  $(x^*, y^*)$ for \eqref{eq:bilinear_problem_lc}, we have
\begin{align}\label{eq_10_pdhg}
&f(\widehat x_k)  + \langle A \widehat x_k - b - \mu_d \widetilde y_k, y\rangle - f( x^*) + \tfrac{\mu_d}{2}\|\widehat  y_k\|^2 + \tfrac{\mu_d}{2}\|\widetilde y_k\|^2 \nn\\
 &\leq \tfrac{1}{\sum_{t=1}^k \eta_t}\left[\tfrac{\|x_0 - x^*\|^2}{2\beta} + \left(\tfrac{5\eta_2 L^2_{A, 1}}{4\mu_d}  - \tfrac{\eta_2}{2\eta_1}\right)\|x_1 - x_0\|^2 \right],
\end{align}
for any $y\in \bbr^m$. Observe that we must have
\begin{align*}
A \widehat x_k -  b - \mu_d \widetilde y_k = 0,
\end{align*}
since otherwise the left hand side of Ineq.~\eqref{eq_10_pdhg} can be unbounded. Therefore, we conclude from \eqref{eq_10_pdhg} and the above observation that
\begin{align}\label{eq_10_pdhg_1}
&f(\widehat x_k) - f( x^*)  + \tfrac{\mu_d}{2}\|\widehat  y_k\|^2 + \tfrac{\mu_d}{2}\|\widetilde y_k\|^2 \nn\\
 &\leq \tfrac{1}{\sum_{t=1}^k \eta_t}\left[\tfrac{\|x_0 - x^*\|^2}{2\beta} + \left(\tfrac{5\eta_2 L^2_{A, 1}}{4\mu_d}  - \tfrac{\eta_2}{2\eta_1}\right)\|x_1 - x_0\|^2 \right],
\end{align}
and
\begin{align*}
\|A \widehat x_k - b\| =\mu_d \|\widetilde y_k\|,
\end{align*}
which, respectively, 
provide bounds on the optimality gap and constraint violation. Next, we provide an upper bound for $\|\widetilde y_k\|$. By taking $(x,y) = (x^*, y^*)$ in Ineq.~\eqref{eq_12_pdhg}, we have
\begin{align*}
&f(\widehat x_k) +  \langle A \widehat x_k - b, y^*\rangle - \left[f( x^*) + \langle A x^* - b, \widehat y_k\rangle \right] + \tfrac{\mu_d}{2}\|\widehat  y_k\|^2 - \tfrac{\mu_d}{2}\|y^*\|^2 + \tfrac{\mu_d}{2}\|\widetilde y_k - y^*\|^2 \nn\\
 &\leq \tfrac{1}{\sum_{t=1}^k \eta_t}\left[\tfrac{\|x_0 - x^*\|^2}{2\beta} + \left(\tfrac{5\eta_2 L^2_{A, 1}}{4\mu_d}  - \tfrac{\eta_2}{2\eta_1}\right)\|x_1 - x_0\|^2 \right],
\end{align*}
which, in view of the relation $f(\widehat x_k)  + \langle A\widehat x_k -  b, y^*\rangle \geq f( x^*) +\langle A  x^* -  b, \widehat y_k\rangle$ due to 
$(x^*, y^*)$ being a pair of saddle point of \eqref{eq:bilinear_problem_lc}, implies that
\begin{align*}
\tfrac{\mu_d}{4}\|\widetilde y_k\|^2 - \mu_d\|y^*\|^2
& \leq - \tfrac{\mu_d}{2}\|y^*\|^2 + \tfrac{\mu_d}{2}\|\widetilde y_k - y^*\|^2 \nn\\
 &\leq \tfrac{1}{\sum_{t=1}^k \eta_t}\left[\tfrac{\|x_0 - x^*\|^2}{2\beta} + \left(\tfrac{5\eta_2 L^2_{A, 1}}{4\mu_d}  - \tfrac{\eta_2}{2\eta_1}\right)\|x_1 - x_0\|^2 \right].
\end{align*}
As a consequence, we have
\begin{align*}
\mu_d \|\widetilde y_k\|
  &\leq \mu_d \sqrt{4\|y^*\|^2+\tfrac{4}{\mu_d(\sum_{t=1}^k \eta_t)}\left[\tfrac{\|x_0 - x^*\|^2}{2\beta} + \left(\tfrac{5\eta_2 L^2_{A, 1}}{4\mu_d} - \tfrac{\eta_2}{2\eta_1}\right)\|x_1 - x_0\|^2 \right]}\\
&\leq 2\mu_d \|y^*\| + 2 \sqrt{\tfrac{\mu_d}{\sum_{t=1}^k \eta_t}\left[\tfrac{\|x_0 - x^*\|^2}{2\beta} + \left(\tfrac{5\eta_2 L^2_{A, 1}}{4\mu_d}  - \tfrac{\eta_2}{2\eta_1}\right)\|x_1 - x_0\|^2 \right]},
\end{align*}
which completes the proof.
\end{proof}
\vgap

Now let us derive from Proposition~\ref{coro_linear_constraint} the complexity of AC-PDHG for solving the linearly constrained problem \eqref{linear_constrained_problem}
in order to find an $(\epsilon_1,\epsilon_2)$-optimal solution, i.e., a point
$\bar x \in X$ such that $f(\bar x) - f^* \leq \epsilon_1$ and $\|A \bar x - b\|\leq \epsilon_2$. 
For simplicity, we assume the initial stepsize $\eta_1$ satisfies $\eta_1^{-1} \leq \mathcal{O}\left(\|A\|^2/\mu_d\right)$ and $\eta_1 \leq 2\mu_d/(5 L_{A, 1}^2)$, which can be ensured through a simple line search as previously discussed in the comments
after Corollary~\ref{main_corollary_2}.  
In this situation, the convergence guarantees in \eqref{optimality_gap} and \eqref{constraint_violation} can be written as
\begin{align*}
&f(\widehat x_k) - f(x^*) \leq \mathcal{O}\left( \tfrac{\widehat L_{A,k}^2 D_{x^*}^2}{\mu_d k^2}\right),\\
&\|A \widehat x_k - b\|  \leq \mu_d \|\widetilde y_k\| \leq \mathcal{O}\left( \mu_d\|y^*\| + \tfrac{\widehat L_{A,k}D_{x^*}}{k} \right),
\end{align*}
where $D_{x^*} := \|x_0 - x^*\|$ and $D_{y^*} := \|y^*\|$.
Thus, taking $\mu_d = \mathcal{O}(\epsilon_2/D_{y^*})$, AC-PDHG can find an $(\epsilon_1, \epsilon_2)$-optimal solution of \eqref{linear_constrained_problem} in at most 
\begin{align} \label{eq:PDHG_Linear1}
\mathcal{O}\left( \tfrac{\|A\|D_{x^*}}{\epsilon_2} +\tfrac{\|A\|D_{x^*}\sqrt{D_{y^*}}}{\sqrt{\epsilon_1\epsilon_2}}\right)
\end{align}
iterations.
In particular, for the case when $\epsilon_1 =\epsilon$ and $\epsilon_2 = \epsilon/D_{y^*}$, which corresponds to a higher requirement on the feasibility (as long as $D_{y^*} \geq 1$), 
the above complexity bound reduces to
\begin{align} \label{eq:PDHG_Linear2}
\mathcal{O}\left( \tfrac{\|A\|D_{x^*} D_{y^*}}{\epsilon}\right)
\end{align}
with $\mu_d$ chosen as $\mathcal{O}(\epsilon/D^2_{y^*})$.

\vspace{0.1in}

It should be noted that 
to achieve the above complexity bounds 
in \eqref{eq:PDHG_Linear1} and \eqref{eq:PDHG_Linear2} requires the input of $\mu_d$, which necessitates an
estimate $\widehat D_{y^*}$ for the unknown quantity $D_{y^*}$.
If this estimate $\widehat D_{y^*}$ is too conservative (i.e., $\widehat D_{y^*} \gg D_{y^*}$), the convergence of AC-PDHG may be slowed down. Conversely, if the estimate is too aggressive (i.e., $\widehat D_{y^*} \ll D_{y^*}$), we might fail to achieve the desired accuracy for the constraints. However,  when the primal feasible region $X$ is bounded (i.e., $\max_{x, y}\|x-y\|\leq D_X$), we can employ a simple ``guess and check'' procedure, as shown below, to determine $\mu_d$ without requiring a precise estimation of $D_{y^*}$. 
\begin{algorithm}[h]\caption*{A guess and check procedure for AC-PDHG:}\label{alg:ac_pdhg_guess}
	\begin{algorithmic}
		\State{\textbf{Input}: Choose $\widehat D^{(0)}$ such that $\widehat D^{(0)} \leq D_{y^*}$. Fix $\epsilon_1 >0$ and $\epsilon_2>0$.}
		\For{$i=0, 1, 2,\cdots$}
		\State{Let $\widehat D^{(i)} := \widehat D^{(0)}\cdot 2^{i}, ~\mu^{(i)} := \tfrac{\epsilon_2}{4\widehat D^{(i)}}$. Run Algorithm~\ref{alg:ac_primal_dual} with $\mu_d = \mu^{(i)}$ until we find the first iterate $\widehat k \geq 3$ such that
  \begin{align*}
  &\mathcal{E}_{1}(\widehat k) \leq \epsilon_1 \text{  and  }\min\left\{\|A \widehat x_{\widehat k}- b\|, \mathcal{E}_{2}(\widehat k)\right\} \leq \epsilon_2,\\
  &\text{where } \mathcal{E}_{1}(k) := \tfrac{12 \widehat L_{A, k}^2 D_X^2}{\mu^{(i)}\beta(6k + \alpha k (k-3))}, \text{  }\mathcal{E}_{2}(k):= 4 \sqrt{\tfrac{12  \widehat L_{A, k}^2 D_X^2}{\beta(6k + \alpha k(k-3))}},  \text{ for $\widehat L_{A, k}$ defined in \eqref{eta_lower_bound_2}.}
  \end{align*}
  }
  \State{If $\|A \widehat x_{\widehat k}- b\| \leq \epsilon_2$: Terminate.}
		\EndFor
\State{\textbf{Output}: $(\widehat x, \widehat w) = (\widehat x_{\widehat k}, \widehat w_{\widehat k})$, $\widehat D_Y= \widehat D^{(i)}$.}
	\end{algorithmic}
\end{algorithm}

 We provide a brief explanation of this procedure. For each $\widehat D^{(i)}$, we check whether it is possible to attain the desired optimality gap and constraint violation. On one hand, if  $\mathcal{E}_{1}(\widehat k) \leq \epsilon_1$ and $\|A \widehat x_{\widehat k}- b\| \leq \epsilon_2$, we can terminate the algorithm with the desired accuracy achieved. On the other hand, if $\mathcal{E}_{1}(\widehat k) \leq \epsilon_1$ and $\mathcal{E}_{2}(\widehat k) \leq \epsilon_2$, we conclude from \eqref{constraint_violation} that $2\mu^{(i)} \|y^*\| > \epsilon_2/2$, thus $\widehat D^{(i)}$ should be further enlarged. This guess and check procedure is guaranteed to terminate, and upon termination, we must have  $\widehat D_Y \leq 2 D_{y^*}$.
As a result, an $(\epsilon_1, \epsilon_2)$-optimal solution of \eqref{linear_constrained_problem} can be found within at most 
$$\mathcal{O}\left( \tfrac{\|A\|D_{X}}{\epsilon_2}\log(\tfrac{D_{y^*}}{\widehat D^{(0)}}) +\tfrac{\|A\|D_{X}\sqrt{D_{y^*}}}{\sqrt{\epsilon_1\epsilon_2}}\right)$$
iterations. This procedure does not require the input of any other problem parameters than the diameter of the primal feasible region $D_X$ as well as the desired accuracy $\epsilon_1$ and $\epsilon_2$.

\section{An adaptive alternating direction method of multipliers (ADMM)}\label{sec:ADMM}
In this section, we consider an important class of linearly constrained problem in the form of
\begin{align}\label{bilinear_two_parts}
\min_{x\in X, w \in W} ~&F(x) + G(w) \nn\\
\text{s.t.} ~& B w - K x = b.
\end{align}
where $X \subseteq \bbr^{n_1}$ and $W \subseteq \bbr^{n_2}$ are closed convex sets, $b \in \bbr^m$, $K\in \bbr^{m \times n_1}$ and $B\in\bbr^{m \times n_2}$. 

Similar to Section~\ref{sec:PDHG}, we solve \eqref{bilinear_two_parts} by considering the primal-dual formulation
\begin{align}\label{eq:bilinear_problem_ADMM}
\min_{x\in X, w \in W} \max_{y \in \bbr^m} F(x) + G(w) + \langle Kx-Bw+b, y \rangle.
\end{align}
We assume there exists an optimal solution $(x^*, w^*) \in X\times K$ for \eqref{bilinear_two_parts}, along with a dual variable $y^*$ such that $(x^*, w^*, y^*) \in Z:= X \times K \times \bbr^m$ is a saddle point for problem \eqref{eq:bilinear_problem_ADMM}. 
With a slight abuse of notation, we still define here
\begin{align}\label{def_Q_mu_admm}
Q(\bar z, z):= &F(\bar x) + G(\bar w) + \langle K \bar x - B \bar w + b, y\rangle - \left[F( x) + G(w) + \langle K  x - B w + b, \bar y\rangle \right],
\end{align}
where $\bar z := (\bar x, \bar w, \bar y) \in Z$ and $z = (x,w,y) \in Z$. It follows from the convex-concave structure of \eqref{eq:bilinear_problem_ADMM} that $z^*$ is a saddle point of \eqref{eq:bilinear_problem_ADMM} if and only if $Q(z^*, z) \leq 0$ for any $z \in Z$.

Clearly, we can apply the AC-PDHG method in Algorithm~\ref{alg:ac_primal_dual} to solve problem~\eqref{bilinear_two_parts}, and the complexity will depend on $\|K\| + \|B\|$. However, if $\|B\|\gg \|K\|$, the convergence of AC-PDHG is dominated by $\|B\|$, no matter how small $\|K\|$ is. To address this issue, we propose a preconditioned ADMM-type method whose convergence rate depends solely on $\|K\|$, at the cost of solving a more complicated subproblem in each iteration.
\begin{algorithm}[h]\caption{Auto-Conditioned Alternating Direction Method of Multipliers (AC-ADMM)}\label{alg:ac_admm}
	\begin{algorithmic}
		\State{\textbf{Input}: initial point $x_0 = \bar x_0 \in X$, nonnegative parameters $\beta_t \in (0, 1)$, $\eta_t \in \bbr_+$, and $\tau_t \in \bbr_+$. 
  Let 
  \begin{align}
  w_0 &= \arg \min_{w \in W}  \left\{G(w) +  \tfrac{1}{2\mu_d}\|Bw - K x_0 -b\|^2 \right\}, \label{primal_update_0}\\
  y_0 &= ( K x_0 - B w_0 +b)/{\mu_d}.\label{dual_update_0}
  \end{align}}
		\For{$t=1,\cdots, k$}
		\State{
  \begin{align}
  x_t &= \arg \min_{x \in X} \left\{\eta_t [\langle K^\top y_{t-1} , x \rangle + F(x)] + \tfrac{1}{2}\|\bar x_{t-1} - x\|^2\right\}, \label{primal_x_update}\\
  \bar x_t &= (1-\beta_t) \bar x_{t-1} + \beta_t x_t, \label{primal_x-center}\\
  w_t &= \arg \min_{w \in W}  \left\{G(w) + (\tau_t + \mu_d)^{-1}\left(-\tau_t \langle y_{t-1}, Bw \rangle  + \tfrac{1}{2}\|Bw - K x_t -b\|^2 \right)\right\}, \label{primal_w_update}\\
  y_t & = \left[\tau_t y_{t-1}  - (B w_t - Kx_t - b)\right]/(\tau_t + \mu_d). \label{dual_update}
  \end{align}
  }
		\EndFor
	\end{algorithmic}
\end{algorithm}

Similar to AC-PDHG, the AC-ADMM method (Algorithm~\ref{alg:ac_admm}) updates both primal and dual variables in each iteration. The major difference exists in that AC-ADMM maintains the updates of two separate sequences of primal variables, $\{x_t\}$ and $\{w_t\}$. Specifically, the update of $w_t$ in \eqref{primal_w_update} requires solving a more complicated subproblem involving an augmented Lagrangian term $\|B w -Kx_t -b\|^2$. However, unlike the classical ADMM where the update for both $x_t$ and $w_t$ involves augmented Lagrangian terms, the update of $x_t$ in AC-ADMM is based on a prox-mapping step similar to \eqref{primal_prox-mapping} in Algorithm~\ref{alg:ac_primal_dual}.
This subproblem can be computed more efficiently than those involving augmented Lagrangian terms.  
Additionally, as in AC-PDHG, we use a moving averaging sequence $\{\bar x_t\}$ as the prox-centers.  
It should also be noted that the explicit update rule \eqref{dual_update} for the dual variable $y_t$ can be equivalently expressed in the following form of prox-mapping with a regularization term $\tfrac{\mu_d}{2}\|y\|^2$:
\begin{align} \label{eq:ADMM_equivalent_dual}
y_t = \arg\min_{y \in \bbr^m}\left\{ \langle B w_t - Kx_t - b,y \rangle + \tfrac{\mu_d}{2}\|y\|^2 + \tfrac{\tau_t}{2} \|y_{t-1} - y\|^2 \right\},
\end{align}
which highlights a more explicit connection between AC-PDHG and AC-ADMM.

Our goal in the remaining part of this section is to establish the convergence of AC-ADMM equipped with an adaptive stepsize policy. Instead of relying on the global constant $\|K\|$, we compute the local estimate of $\|K\|$ at the $t$-th iteration according to
\begin{align} 
L_{K, t} := \tfrac{\|K^\top (y_t - y_{t-1})\|}{\|y_t - y_{t-1}\|} \label{def_L_K_t}
\end{align}
and use it to determine the selection of
$\eta_t$ and $\tau_t$ for the subsequent iterations.

We first show a result to characterize the optimality condition of steps \eqref{primal_w_update} and \eqref{dual_update} and to provide an important relation between the primal and dual variables for the first iteration. 
\begin{lemma}\label{lemma_optimality_conditions}
Let $\{x_t\}$, $\{w_t\}$ and $\{y_t\}$ be generated by Algorithm~\ref{alg:ac_admm}. We have for $t \geq 0$,
\begin{align}\label{optimality_condition_w}
G(w) - G(w_t) - \langle  B^\top y_t, w - w_t\rangle \geq 0,
\end{align}
and 
\begin{align}
&\langle B w_{t} - Kx_{t} - b, y_{t} - y\rangle + \tfrac{\mu_d}{2}\|y_{t}\|^2 - \tfrac{\mu_d}{2}\|y\|^2 + \tfrac{\mu_d + \tau_{t}}{2}\|y_{t} - y\|^2\nn\\
& = \tfrac{\tau_t}{2} \|y_{t-1} - y\|^2 - \tfrac{\tau_t}{2} \|y_{t-1} - y_t\|^2. \label{eq_dual_three-point_lemma_admm}
\end{align}
Moreover, if $\tau_1 = 0$, we have
\begin{align}\label{eq_bound_y_1_y_0_K}
\|y_1 - y_0\| \leq \tfrac{L_{K, 1}}{\mu_d}\|x_1 - x_0\|.
\end{align}
\end{lemma}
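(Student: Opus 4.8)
The plan is to follow the template of Lemma~\ref{dual_three-point_lemma}, adapting it to the extra primal block $w$ that is present in AC-ADMM, and to dispatch the three claims in turn. For the first claim I would write down the first-order optimality condition of the $w_t$-subproblem \eqref{primal_w_update}: there exists $h\in\partial G(w_t)$ with $\langle h+(\tau_t+\mu_d)^{-1}B^\top\big(Bw_t-Kx_t-b-\tau_t y_{t-1}\big),\,w-w_t\rangle\ge 0$ for every $w\in W$. Combining this with the subgradient inequality $G(w)\ge G(w_t)+\langle h,w-w_t\rangle$ gives $G(w)-G(w_t)\ge (\tau_t+\mu_d)^{-1}\langle B^\top\big(\tau_t y_{t-1}-(Bw_t-Kx_t-b)\big),\,w-w_t\rangle$, and substituting the explicit dual update \eqref{dual_update} in the equivalent form $\tau_t y_{t-1}-(Bw_t-Kx_t-b)=(\tau_t+\mu_d)y_t$ collapses the right-hand side to exactly $\langle B^\top y_t,w-w_t\rangle$, which is \eqref{optimality_condition_w}. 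The case $t=0$ is the identical computation with $\tau_0:=0$, using \eqref{primal_update_0}--\eqref{dual_update_0} in place of \eqref{primal_w_update}--\eqref{dual_update}.

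For the second claim I would invoke the equivalent prox form \eqref{eq:ADMM_equivalent_dual} of the dual step, whose optimality condition over the unconstrained space $\bbr^m$ is the \emph{identity} $(Bw_t-Kx_t-b)+\mu_d y_t+\tau_t(y_t-y_{t-1})=0$. Taking the inner product with $y_t-y$ and expanding the two quadratic terms via $2\langle a,a-b\rangle=\|a\|^2-\|b\|^2+\|a-b\|^2$ and $2\langle a-b,a-c\rangle=\|a-b\|^2+\|a-c\|^2-\|b-c\|^2$ yields \eqref{eq_dual_three-point_lemma_admm} exactly; it comes out as an equality rather than an inequality precisely because $Y=\bbr^m$ is unconstrained and the prox objective is quadratic. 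The $t=0$ instance follows the same way with $\tau_0:=0$, starting from $(Bw_0-Kx_0-b)+\mu_d y_0=0$, which is just a rearrangement of \eqref{dual_update_0}.

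For the third claim I would apply \eqref{eq_dual_three-point_lemma_admm} twice: at $t=1$ with $y=y_0$ (using $\tau_1=0$) and at $t=0$ with $y=y_1$ (using $\tau_0=0$). Adding the two equalities cancels the $\tfrac{\mu_d}{2}(\|y_1\|^2-\|y_0\|^2)$ contributions and, after regrouping the bilinear terms, leaves $\langle B(w_1-w_0)-K(x_1-x_0),\,y_1-y_0\rangle+\mu_d\|y_1-y_0\|^2=0$. Next I would invoke the first claim at $t=1$ with $w=w_0$ and at $t=0$ with $w=w_1$; summing those two inequalities the $G$-values cancel and one obtains $\langle B^\top(y_1-y_0),w_1-w_0\rangle\ge 0$. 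Hence $\langle K(x_1-x_0),y_1-y_0\rangle=\mu_d\|y_1-y_0\|^2+\langle B(w_1-w_0),y_1-y_0\rangle\ge \mu_d\|y_1-y_0\|^2$. Finally, Cauchy--Schwarz together with $\|K^\top(y_1-y_0)\|=L_{K,1}\|y_1-y_0\|$ from \eqref{def_L_K_t} bounds $\langle K(x_1-x_0),y_1-y_0\rangle=\langle x_1-x_0,K^\top(y_1-y_0)\rangle$ by $L_{K,1}\|y_1-y_0\|\,\|x_1-x_0\|$; dividing through by $\|y_1-y_0\|$ (the claim being trivial when $y_1=y_0$, by the $0/0=0$ convention) gives \eqref{eq_bound_y_1_y_0_K}.

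The step I expect to be the main obstacle is the cross term $\langle B(w_1-w_0),y_1-y_0\rangle$ in the third claim: because AC-ADMM carries the additional primal variable $w$, one cannot, as in the PDHG analysis of Lemma~\ref{dual_three-point_lemma}, read off a relation between $x_1-x_0$ and $y_1-y_0$ directly from the summed optimality conditions. The resolution is the monotonicity of the map induced by the $w$-update --- which is exactly the first claim applied at two consecutive iterations --- forcing this cross term to be nonnegative so that it can simply be discarded. Everything else is routine bookkeeping with optimality conditions and the polarization identity.
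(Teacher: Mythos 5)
Your proposal is correct and follows essentially the same route as the paper: optimality condition of the $w$-subproblem plus the explicit dual update for \eqref{optimality_condition_w}, the three-point identity for the unconstrained quadratic dual prox for \eqref{eq_dual_three-point_lemma_admm}, and then summing the two instances of each at $t=0,1$ — using the monotonicity $\langle B^\top(y_1-y_0),w_1-w_0\rangle\ge 0$ to discard the $w$ cross term — for \eqref{eq_bound_y_1_y_0_K}. The only cosmetic difference is that you finish with Cauchy--Schwarz and divide by $\|y_1-y_0\|$ where the paper invokes Young's inequality and rearranges; both yield the identical bound.
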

\begin{proof}
First, notice that the updates in \eqref{primal_update_0} and \eqref{dual_update_0} can be viewed as \eqref{primal_w_update} and \eqref{dual_update}, respectively, with $\tau_0=0$. By the optimality condition of \eqref{primal_w_update} and the convexity of $G$, we have for $t \geq 0$,
\begin{align*}
G(w) - G(w_t) + \langle - \tau_t B^\top y_{t-1}  + B^\top(Bw_t - K x_t -b), w - w_t \rangle/(\tau_t + \mu_d) \geq 0.
\end{align*}
By substituting \eqref{dual_update} into the above inequality, we obtain
\begin{align*}
G(w) - G(w_t) - \langle B^\top y_t, w - w_t \rangle \geq 0,
\end{align*}
which completes the proof of Ineq. \eqref{optimality_condition_w}. 
Eq. \eqref{eq_dual_three-point_lemma_admm} follows from either direct calculations or an immediate application of the ``three-point'' lemma (e.g., Lemma 3.5 of \cite{LanBook2020}, all the inequalities in the proof hold with equality) to the equivalent expression of \eqref{dual_update} in \eqref{eq:ADMM_equivalent_dual}.

It remains to prove Ineq. \eqref{eq_bound_y_1_y_0_K}. Taking $t=1$, $y=y_0$ and $\tau_1=0$ in \eqref{eq_dual_three-point_lemma_admm} yields
\begin{align*}
\langle B w_1 - K x_1 - b, y_1 - y_0 \rangle \leq \tfrac{\mu_d}{2}\|y_0\|^2 - \tfrac{\mu_d}{2}\|y_1\|^2 - \tfrac{\mu_d}{2}\|y_1- y_0\|^2.
\end{align*}
Similarly, we can set $t=0$, $\tau_0 =0$ and $y=y_1$ in \eqref{eq_dual_three-point_lemma_admm} to obtain
\begin{align*}
\langle B w_0 - K x_0 - b, y_0 - y_1\rangle \leq \tfrac{\mu_d}{2}\|y_1\|^2 - \tfrac{\mu_d}{2}\|y_0\|^2 - \tfrac{\mu_d}{2}\|y_1 - y_0\|^2.
\end{align*}
By summing up the above two inequalities, we have
\begin{align}\label{inter_proof_1}
\langle (K x_1 - Bw_1) - (Kx_0 - Bw_0), y_1- y_0\rangle \geq \mu_d \|y_1 - y_0\|^2.
\end{align}
Meanwhile, Ineq.~\eqref{optimality_condition_w} indicates that
\begin{align*}
G(w_0) - G(w_1) + \langle -B^{\top} y_1, w_0 - w_1\rangle &\geq 0,\\
G(w_1) - G(w_0) + \langle -B^{\top} y_0, w_1 - w_0\rangle &\geq 0.
\end{align*}
Summing up the above two inequalities provides us with 
\begin{align}\label{inter_proof_2}
\langle B^\top (y_1- y_0), w_1 - w_0\rangle \geq 0.
\end{align}
Then we sum up Ineqs.~\eqref{inter_proof_1} and \eqref{inter_proof_2} and obtain
\begin{align*}
\langle K^\top (y_1 - y_0), x_1 - x_0\rangle \geq \mu_d \|y_1 - y_0\|^2 = \tfrac{\mu_d}{L_{K, 1}^2}\|K^\top (y_1 - y_0)\|^2,
\end{align*}
which, together with Young's inequality, indicates that
\begin{align*}
\tfrac{\mu_d}{L_{K, 1}^2}\|K^\top (y_1- y_0)\|^2 \leq \tfrac{\mu_d}{2 L^2_{K,1}}\|K^\top (y_1- y_0)\|^2 + \tfrac{L_{K,1}^2}{2\mu_d}\|x_1-x_0\|^2.
\end{align*}
Finally, we obtain Ineq.~\eqref{eq_bound_y_1_y_0_K} by rearranging the terms and further use the definition of $L_{K, 1}$. 
\end{proof}
\vgap 

We are now ready to establish the main convergence properties of AC-ADMM applied to problem \eqref{bilinear_two_parts} in terms of both the optimality gap and constraint violation. 
\begin{theorem}\label{theorem_admm}
Let $\{x_t\}, \{\bar x_t\}, \{w_t\}$ and $\{y_t\}$ be generated by Algorithm~\ref{alg:ac_admm} with the parameters $\{\tau_t\}$, $\{\eta_t\}$, and $\{\beta_t\}$ satisfying conditions \eqref{cond_3} and
\begin{align}
    \eta_2 &\leq \min\left\{{(1-\beta)\eta_1},  \tfrac{\mu_d}{4 L^2_{K,1}}\right\},\label{cond_1_admm}\\ 
    \eta_t &\leq \min\left\{2(1-\beta)^2 \eta_{t-1}, \tfrac{\tau_{t-2}+\mu_d}{ \tau_{t-1}}\eta_{t-1}, \tfrac{\tau_{t-1}}{4L_{K, t-1}^2} \right\}, ~t \geq 3.
    \label{cond_4_admm}
\end{align}
Then we have
\begin{align}
F(\widehat x_k) + G(\widehat w_k) &- [F( x^*) + G(w^*)] \nn\\
& \leq \tfrac{1}{\sum_{t=1}^k \eta_t}\left[\tfrac{\|x_0 - x^*\|^2}{2\beta} + \left(\tfrac{5\eta_2 L^2_{K, 1}}{4\mu_d}  - \tfrac{\eta_2}{2\eta_1}\right)\|x_1 - x_0\|^2 \right], \label{eq:opt_gap}\\
\|K \widehat x_k - B \widehat w_k + b\| &\leq 2\mu_d \|y^*\| + 2 \sqrt{\tfrac{\mu_d}{\sum_{t=1}^k \eta_t}\left[\tfrac{\|x_0 - x^*\|^2}{2\beta} + \left(\tfrac{5\eta_2 L^2_{K, 1}}{4\mu_d}  - \tfrac{\eta_2}{2\eta_1}\right)\|x_1 - x_0\|^2 \right]},\label{eq:const_vio}
\end{align}
where $\widetilde y_k$ is defined in \eqref{def_tilde_y_k} and $\widehat z_k := (\widehat x_k, \widehat w_k, \widehat y_k)$
is given by
\begin{align}
\widehat x_k &:= \tfrac{\sum_{t=1}^{k}\eta_{t+1} x_{t}}{\sum_{t=1}^{k}\eta_{t+1}}, \quad \widehat y_k := \tfrac{\sum_{t=1}^{k}\eta_{t+1} y_{t}}{\sum_{t=1}^{k}\eta_{t+1}},\ \mbox{and} \ \widehat w_k := \tfrac{\sum_{t=1}^{k}\eta_{t+1} w_{t}}{\sum_{t=1}^{k}\eta_{t+1}}.\label{def_hat_x_k_y_k_admm}
\end{align}
\end{theorem}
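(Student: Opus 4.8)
The plan is to follow the two-stage route used for AC-PDHG in Section~\ref{sec:PDHG}: first establish a master recursion that is the ADMM analog of Theorem~\ref{main_theorem} (with the dual regularizer centered at $\mathbf{0}$), then extract the optimality-gap and feasibility bounds in the manner of Proposition~\ref{coro_linear_constraint}. The $x_t$-update \eqref{primal_x_update} has exactly the prox-mapping form of \eqref{primal_prox-mapping}, so its optimality condition together with convexity of $F$ reproduces the chain \eqref{eq1}--\eqref{eq6_prime} verbatim with $A$ replaced by $K$ and $f$ by $F$; in particular, for $t\geq 3$ one gets a bound on $\eta_t\langle K^\top y_{t-1},x_{t-1}-x\rangle + \eta_t[F(x_{t-1})-F(x)]$ whose right-hand side carries the telescoping pair $\tfrac{1}{2\beta}\|\bar x_{t-1}-x\|^2 - \tfrac{1}{2\beta}\|\bar x_t-x\|^2$, the cross term $\eta_t\langle K^\top(y_{t-1}-y_{t-2}),x_{t-1}-x_t\rangle$, and $-\tfrac12\|x_t-\bar x_{t-1}\|^2$, using $\eta_t\leq 2(1-\beta)^2\eta_{t-1}$ from \eqref{cond_4_admm}; the $t=2$ base case uses $\beta_1=0$, $\tau_1=0$, and \eqref{eq_bound_y_1_y_0_K} exactly as in \eqref{eq9_3}--\eqref{eq9_2}.

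Next I would combine this primal inequality with the dual identity \eqref{eq_dual_three-point_lemma_admm} at index $t-1$ and the $w$-optimality condition \eqref{optimality_condition_w} at index $t-1$. Expanding $\langle Bw_{t-1}-Kx_{t-1}-b,\,y_{t-1}-y\rangle$, the $\langle Kx_{t-1},y_{t-1}\rangle$ terms cancel against the primal contribution, and replacing $\langle Bw_{t-1},y_{t-1}\rangle$ via $G(w_{t-1})-G(w)\leq\langle B^\top y_{t-1},\,w_{t-1}-w\rangle$ collapses all remaining $B$-dependent quantities into $\langle Kx_{t-1}-Bw_{t-1}+b,\,y\rangle - \langle Kx-Bw+b,\,y_{t-1}\rangle$, producing the ADMM analog of \eqref{eq9} with no norm of $B$ present. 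Summing and telescoping over $t$, invoking convexity of $\|\cdot\|^2$, $F$, $G$ with the averages in \eqref{def_hat_x_k_y_k_admm}, \eqref{def_tilde_y_k}, and bounding the residual $\Delta_t := \eta_t\langle K^\top(y_{t-1}-y_{t-2}),x_{t-1}-x_t\rangle - \tfrac{\eta_t\tau_{t-1}}{2}\|y_{t-2}-y_{t-1}\|^2 - \tfrac12\|x_t-\bar x_{t-1}\|^2$ exactly as in \eqref{bound_delta_t}--\eqref{bound_delta_2} (Cauchy--Schwarz with $L_{K,t-1}$, Young's inequality, conditions \eqref{cond_4_admm}, and \eqref{eq_bound_y_1_y_0_K} for $\Delta_2$), I obtain the master inequality
\begin{align*}
&Q(\widehat z_k,z) - \tfrac{\mu_d}{2}\|y\|^2 + \tfrac{\mu_d}{2}\|\widehat y_k\|^2 + \tfrac{\mu_d}{2}\|\widetilde y_k-y\|^2 \\
&\quad\leq \tfrac{1}{\sum_{t=1}^k\eta_t}\Big[\tfrac{\|x_0-x\|^2}{2\beta} + \big(\tfrac{5\eta_2 L^2_{K,1}}{4\mu_d}-\tfrac{\eta_2}{2\eta_1}\big)\|x_1-x_0\|^2 - \tfrac{\|\bar x_{k+1}-x\|^2}{2\beta}\Big],
\end{align*}
the ADMM counterpart of \eqref{eq_main_theorem}.

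For the second stage, fix $x=x^*$, $w=w^*$, and let $y\in\bbr^m$ be arbitrary. Since $Kx^*-Bw^*+b=0$, the term $\langle Kx-Bw+b,\widehat y_k\rangle$ vanishes and $\|\widetilde y_k-y\|^2 = \|\widetilde y_k\|^2 - 2\langle\widetilde y_k,y\rangle + \|y\|^2$ cancels the two $\tfrac{\mu_d}{2}\|y\|^2$ terms, leaving a quantity affine in $y$ with linear part $\langle K\widehat x_k-B\widehat w_k+b-\mu_d\widetilde y_k,\,y\rangle$; since the right-hand side is $y$-independent, we must have $K\widehat x_k-B\widehat w_k+b = \mu_d\widetilde y_k$, hence $\|K\widehat x_k-B\widehat w_k+b\| = \mu_d\|\widetilde y_k\|$ and the residual inequality reduces to $F(\widehat x_k)+G(\widehat w_k)-F(x^*)-G(w^*)+\tfrac{\mu_d}{2}\|\widehat y_k\|^2+\tfrac{\mu_d}{2}\|\widetilde y_k\|^2\leq\tfrac{1}{\sum_{t=1}^k\eta_t}[\cdots]$, giving \eqref{eq:opt_gap} after dropping the nonnegative squared terms. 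To bound $\|\widetilde y_k\|$ I take $(x,w,y)=(x^*,w^*,y^*)$ in the master inequality; the saddle-point property gives $F(\widehat x_k)+G(\widehat w_k)+\langle K\widehat x_k-B\widehat w_k+b,y^*\rangle\geq F(x^*)+G(w^*)$, so the gap part is nonnegative, and $\|\widetilde y_k-y^*\|^2\geq\tfrac12\|\widetilde y_k\|^2-\|y^*\|^2$ yields $\tfrac{\mu_d}{4}\|\widetilde y_k\|^2-\mu_d\|y^*\|^2\leq\tfrac{1}{\sum_{t=1}^k\eta_t}[\cdots]$; solving for $\|\widetilde y_k\|$ and using $\sqrt{a+b}\leq\sqrt a+\sqrt b$ gives \eqref{eq:const_vio}.

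The main obstacle — and the entire point of the alternating structure — is the $w$-step bookkeeping in the combination step: one must check that because $w_t$ solves the full subproblem \eqref{primal_w_update} rather than a linearized one, the image $Bw_t$ enters the analysis only through the dual update \eqref{dual_update}, so that after applying \eqref{optimality_condition_w} every $B$-dependent term cancels identically and the residual $\Delta_t$ — the only place a linear-operator norm is needed — involves $K$ alone via $L_{K,t-1}$. Everything else transcribes the AC-PDHG proofs under $A\to K$, $f\to F$, $g\to G$, $Y=\bbr^m$, $\widetilde y_0=\mathbf{0}$.
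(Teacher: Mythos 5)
Your proposal follows essentially the same route as the paper's proof: transcribe the AC-PDHG primal recursion \eqref{eq1}--\eqref{eq6_prime} with $A\to K$, $f\to F$, combine with the $w$-optimality condition \eqref{optimality_condition_w} and the dual identity \eqref{eq_dual_three-point_lemma_admm} so that all $B$-dependent terms cancel, bound $\Delta_t$ exactly as in Theorem~\ref{main_theorem} using $L_{K,t-1}$ and \eqref{eq_bound_y_1_y_0_K}, then extract the two bounds by taking $(x,w)=(x^*,w^*)$, deducing $K\widehat x_k - B\widehat w_k + b = \mu_d\widetilde y_k$ from the $y$-affine structure, and bounding $\|\widetilde y_k\|$ via the saddle point $(x^*,w^*,y^*)$. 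This matches the paper's argument step for step, so the proposal is correct.
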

\begin{proof}
First, recall that \eqref{eq6_prime} in the proof of Proposition \ref{proposition_1} follows from the update rules \eqref{primal_prox-mapping} and \eqref{primal_prox-center} in Algorithm~\ref{alg:ac_primal_dual}. Since the update rules \eqref{primal_x_update} and \eqref{primal_x-center} in AC-ADMM resembles \eqref{primal_prox-mapping} and \eqref{primal_prox-center} respectively, with $A$ replaced by $K$, we can show similarly that for $t \geq 3$,
\begin{align}\label{eq6_prime_admm}
&\eta_t \langle K^\top y_{t-1}, x_{t-1} - x\rangle + \eta_t[F(x_{t-1}) - F(x)] + \tfrac{1}{2\beta_t} \|\bar x_t - x\|^2 \nn\\
&\leq \tfrac{1}{2\beta_t}\|\bar x_{t-1} - x\|^2 +  \eta_t \langle K^\top (y_{t-1} - y_{t-2}), x_{t-1}-x_t \rangle- \tfrac{1}{2}\|x_t - \bar x_{t-1}\|^2.
\end{align}
By combining Ineq.~\eqref{eq6_prime_admm} with Ineq.~\eqref{optimality_condition_w}, we have
\begin{align*}
&\eta_t \left[ \langle K^\top y_{t-1}, x_{t-1} - x\rangle + F(x_{t-1}) - F(x) + G(w_{t-1}) - G(w) - \langle B^\top y_{t-1}, w_{t-1} - w\rangle \right]  \\
&\leq \tfrac{1}{2\beta_t}\|\bar x_{t-1} - x\|^2 - \tfrac{1}{2\beta_t} \|\bar x_t - x\|^2 +  \eta_t \langle K^\top (y_{t-1} - y_{t-2}), x_{t-1}-x_t \rangle- \tfrac{1}{2}\|x_t - \bar x_{t-1}\|^2
\end{align*}
or equivalently,
\begin{align*}
&\eta_t \left[  F(x_{t-1})-F(x) + G(w_{t-1}) - G(w)+ \langle Kx_{t-1} - Bw_{t-1}+b, y \rangle - \langle Kx-Bw+b, y_{t-1}\rangle \right]  \\
&\leq \tfrac{1}{2\beta_t}\|\bar x_{t-1} - x\|^2 - \tfrac{1}{2\beta_t} \|\bar x_t - x\|^2 +  \eta_t \langle K^\top (y_{t-1} - y_{t-2}), x_{t-1}-x_t \rangle- \tfrac{1}{2}\|x_t - \bar x_{t-1}\|^2\\
&\quad  + \eta_t  \langle - Kx_{t-1} + Bw_{t-1} -b, y_{t-1}\rangle.
\end{align*}
Substituting Ineq.~\eqref{eq_dual_three-point_lemma_admm} into the above inequality and rearranging the terms, we have
\begin{align}
&\eta_t \big[ F(x_{t-1}) - F(x) + G(w_{t-1}) - G(w) + \langle Kx_{t-1} - Bw_{t-1}+b, y \rangle- \langle Kx-Bw+b, y_{t-1}\rangle  \nn\\
&\quad - \tfrac{\mu_d}{2}\|y\|^2 + \tfrac{\mu_d}{2}\|y_{t-1}\|^2 \big] + \tfrac{\eta_t(\mu_d+\tau_{t-1})}{2}\|y_{t-1} - y\|^2 - \tfrac{\eta_t \tau_{t-1}}{2}\|y_{t-2}-y\|^2  \nn\\
&\leq \tfrac{1}{2\beta_t}\|\bar x_{t-1} - x\|^2 - \tfrac{1}{2\beta_t} \|\bar x_t - x\|^2  +  \eta_t \langle K^\top (y_{t-1} - y_{t-2}), x_{t-1}-x_t \rangle\nn\\
&\quad - \tfrac{1}{2}\|x_t - \bar x_{t-1}\|^2 - \tfrac{\eta_t \tau_{t-1}}{2}\|y_{t-1} - y_{t-2}\|^2.\label{eq9_admm}
\end{align}
Moreover, for the case $t=2$, by utilizing $\beta_1 = 0$ and $\eta_2 \leq (1-\beta)\eta_1$, we can show the following inequality as an analog of Ineq.~\eqref{eq9_3} in the proof of Proposition~\ref{proposition_1}:
\begin{align}\label{eq6_admm}
&\eta_2 \langle K^\top y_{1}, x_{1} - x\rangle + \eta_2[F(x_1) - F(x)] + \tfrac{1}{2\beta_2} \|\bar x_2 - x\|^2 + \tfrac{\eta_2}{2\eta_{1}} \left[\|x_{1} - \bar x_{1}\|^2 + \|x_2 - x_{1}\|^2 \right]\nn\\
&\leq \tfrac{1}{2\beta_2}\|\bar x_{1} - x\|^2 +  \eta_2 \langle K^\top (y_{1} - y_{0}), x_{1}-x_2 \rangle - \tfrac{1}{2}\|x_2 - \bar x_{1}\|^2.
\end{align}
Then, combining the above inequality with Ineqs.~\eqref{optimality_condition_w} and \eqref{eq_dual_three-point_lemma_admm}, using $\tau_1 =  0$, and rearranging the terms yield
\begin{align}
&\eta_2 \left[F(x_1) - F(x)+ G(w_{1}) - G(w) + \langle Kx_{1} - Bw_{1}+b, y \rangle - \langle Kx-Bw+b, y_{1}\rangle - \tfrac{\mu_d}{2}\|y\|^2 + \tfrac{\mu_d}{2}\|y_{1}\|^2 \right]  \nn\\
& \quad  + \tfrac{1}{2\beta_2} \|\bar x_2 - x\|^2 + \tfrac{\eta_2\mu_d}{2}\|y_{1} - y\|^2 + \tfrac{\eta_2}{2\eta_{1}} \left[\|x_{1} - \bar x_{1}\|^2 + \|x_2 - x_{1}\|^2 \right]   \nn\\
&\leq \tfrac{1}{2\beta_2}\|x_0 - x\|^2 +  \eta_2 \langle K^\top (y_{1} - y_{0}), x_{1}-x_2 \rangle- \tfrac{1}{2}\|x_2 - \bar x_{1}\|^2  - \tfrac{1}{2}\|x_2 - \bar x_{1}\|^2, \label{eq9_2_admm}
\end{align}
By taking the summation of Ineq.~\eqref{eq9_2_admm} and the telescope sum of Ineq.~\eqref{eq9_admm} for $t=3,..., k+1$, and noting that $\beta_t = \beta$ for $t \geq 2$, we obtain
\begin{align}\label{eq_final_admm}
& \tsum_{t=1}^k \eta_{t+1} \left[F(x_t) + G(w_t) + \langle K x_{t} - B w_t + b, y \rangle - \tfrac{\mu_d}{2}\|y\|^2 - F(x) - G(w) - \langle Kx - Bw +b, y_{t}\rangle + \tfrac{\mu_d}{2}\|y_{t}\|^2 \right]\nn\\
& + \tfrac{1}{2\beta}\|\bar x_{k+1} - x\|^2 + \tfrac{\eta_2}{2\eta_{1}} \left[\|x_1 - \bar x_1\|^2 + \|x_2 - x_1\|^2 \right] + \tfrac{\eta_{k+1}(\mu_d + \tau_k)}{2}\|y_k - y\|^2\nn\\
&+ \tsum_{t=1}^{k-1}\tfrac{\eta_{t+1}(\mu_d + \tau_t) - \eta_{t+2}\tau_{t+1}}{2}\|y_t - y\|^2\nn\\
& \leq \tfrac{1}{2\beta}\|x_0 - x\|^2 + \tsum_{t=2}^{k+1}\Delta_{t},
\end{align}
where
\begin{align}
\Delta_t &:= \eta_t \langle K^\top (y_{t-1} - y_{t-2}), x_{t-1} - x_t\rangle - \tfrac{\eta_t \tau_{t-1}}{2}\|y_{t-2} - y_{t-1}\|^2  - \tfrac{1}{2}\|x_t - \bar x_{t-1}\|^2.\label{def_Delta_t_admm}
\end{align}
Then, by using exactly the same arguments in the proof of Proposition \ref{proposition_1} (after Ineq.~\eqref{eq_final}), we obtain
\begin{align}\label{eq:proposition_2}
&\left(\tsum_{t=1}^k \eta_t\right) \cdot \left(Q(\widehat z_k, z) - \tfrac{\mu_d}{2}\|y\|^2 + \tfrac{\mu_d}{2}\|\widehat y_k\|^2+ \tfrac{\mu_d}{2}\|\widetilde y_k - y\|^2\right) + \tfrac{1}{2\beta}\|\bar x_{k+1} - x\|^2 + \tfrac{\eta_2}{2\eta_{1}} \left[\|x_1 - \bar x_1\|^2 + \|x_2 - x_1\|^2 \right]\nn\\
 &\leq \tfrac{1}{2\beta}\|x_0 - x\|^2 + \tsum_{t=2}^{k+1}\Delta_{t}.
\end{align}
Bounding $\Delta_t$ using the same arguments in the proof of Theorem~\ref{main_theorem} with $A$ replaced by $K$, we have
\begin{align}\label{eq_12_admm}
&F(\widehat x_k) + G(\widehat w_k) + \langle K \widehat x_k - B \widehat w_k + b, y\rangle - \tfrac{\mu_d}{2}\|y\|^2 - \left[F( x) + G(w) + \langle K  x - B w + b, \widehat y_k\rangle - \tfrac{\mu_d}{2}\|\widehat  y_k\|^2\right]  \nn\\
 &\leq \tfrac{1}{\sum_{t=1}^k \eta_t}\left[\tfrac{\|x_0 - x\|^2}{2\beta} + \left(\tfrac{5\eta_2 L^2_{K, 1}}{4\mu_d}  - \tfrac{\eta_2}{2\eta_1}\right)\|x_1 - x_0\|^2 - \tfrac{\|x_{k+1} - \bar x_k\|^2}{4} - \tfrac{\|\bar x_{k+1} - x\|^2}{2\beta}\right] - \tfrac{\mu_d}{2}\|\widetilde y_k - y\|^2.
\end{align}
Using the fact $\|\widetilde y_k - y\|^2 = \|\widetilde y_k\|^2 + \|y\|^2 - 2 \langle \widetilde y_k, y\rangle$ and rearranging the terms in \eqref{eq_12_admm}, we have
\begin{align*}
&F(\widehat x_k) + G(\widehat w_k) + \langle K \widehat x_k - B \widehat w_k + b - \mu_d \widetilde y_k, y\rangle - \left[F( x) + G(w) + \langle K  x - B w + b, \widehat y_k\rangle \right] + \tfrac{\mu_d}{2}\|\widehat  y_k\|^2 + \tfrac{\mu_d}{2}\|\widetilde y_k\|^2 \nn\\
 &\leq \tfrac{1}{\sum_{t=1}^k \eta_t}\left[\tfrac{\|x_0 - x\|^2}{2\beta} + \left(\tfrac{5\eta_2 L^2_{K, 1}}{4\mu_d}  - \tfrac{\eta_2}{2\eta_1}\right)\|x_1 - x_0\|^2 - \tfrac{\|x_{k+1} - \bar x_k\|^2}{4} - \tfrac{\|\bar x_{k+1} - x\|^2}{2\beta}\right].
\end{align*}
Taking $(x, w) = (x^*, w^*)$ and invoking that $Kx^* - Bw^* +b=0$ due to the optimality of $(x^*, w^*, y^*)$ for \eqref{eq:bilinear_problem_ADMM}, we conclude that, for any $y\in \bbr^m$,
\begin{align}\label{eq_10_admm}
&F(\widehat x_k) + G(\widehat w_k) + \langle K \widehat x_k - B \widehat w_k + b - \mu_d \widetilde y_k, y\rangle - [F( x^*) + G(w^*)] + \tfrac{\mu_d}{2}\|\widehat  y_k\|^2 + \tfrac{\mu_d}{2}\|\widetilde y_k\|^2 \nn\\
 &\leq \tfrac{1}{\sum_{t=1}^k \eta_t}\left[\tfrac{\|x_0 - x^*\|^2}{2\beta} + \left(\tfrac{5\eta_2 L^2_{K, 1}}{4\mu_d} - \tfrac{\eta_2}{2\eta_1}\right)\|x_1 - x_0\|^2 \right].
\end{align}
This inequality implies
\begin{align*}
K \widehat x_k - B \widehat w_k + b - \mu_d \widetilde y_k = 0,
\end{align*}
since otherwise its left hand side can be unbounded. Therefore, we arrive at
\begin{align}\label{eq_10_admm_1}
&F(\widehat x_k) + G(\widehat w_k) - [F( x^*) + G(w^*)] + \tfrac{\mu_d}{2}\|\widehat  y_k\|^2 + \tfrac{\mu_d}{2}\|\widetilde y_k\|^2 \nn\\
 &\leq \tfrac{1}{\sum_{t=1}^k \eta_t}\left[\tfrac{\|x_0 - x^*\|^2}{2\beta} + \left(\tfrac{5\eta_2 L^2_{K, 1}}{4\mu_d}  - \tfrac{\eta_2}{2\eta_1}\right)\|x_1 - x_0\|^2 \right],
\end{align}
and
\begin{align*}
\|K \widehat x_k - B \widehat w_k + b\| \leq \mu_d \|\widetilde y_k\|,
\end{align*}
which provides guarantees for the optimality gap and the constraint violation, respectively. Finally, we provide an upper bound for $\|\widetilde y_k\|$. By taking $(x,w,y) = (x^*, w^*, y^*)$ in Ineq.~\eqref{eq_12_admm}, we have
\begin{align*}
&F(\widehat x_k) + G(\widehat w_k) + \langle K \widehat x_k - B \widehat w_k + b, y^*\rangle - \left[F( x^*) + G(w^*) + \langle K  x^* - B w^* + b, \widehat y_k\rangle \right]  \nn\\
&\quad  + \tfrac{\mu_d}{2}\|\widehat  y_k\|^2 - \tfrac{\mu_d}{2}\|y^*\|^2 + \tfrac{\mu_d}{2}\|\widetilde y_k - y^*\|^2\nn\\
 &\leq \tfrac{1}{\sum_{t=1}^k \eta_t}\left[\tfrac{\|x_0 - x^*\|^2}{2\beta} + \left(\tfrac{5\eta_2 L^2_{K, 1}}{4\mu_d}  - \tfrac{\eta_2}{2\eta_1}\right)\|x_1 - x_0\|^2 \right],
\end{align*}
which, in view of the fact that
$F(\widehat x_k) + G(\widehat w_k) + \langle K \widehat x_k - B \widehat w_k + b, y^*\rangle \geq F( x^*) + G(w^*) +\langle K  x^* - B w^* + b, \widehat y_k\rangle$ due to 
 $(x^*, w^*, y^*)$ being a saddle point of \eqref{eq:bilinear_problem_ADMM}, then implies that
\begin{align*}
\tfrac{\mu_d}{4}\|\widetilde y_k\|^2 - \mu_d\|y^*\|^2
& \leq  - \tfrac{\mu_d}{2}\|y^*\|^2 + \tfrac{\mu_d}{2}\|\widetilde y_k - y^*\|^2 \nn\\
 &\leq \tfrac{1}{\sum_{t=1}^k \eta_t}\left[\tfrac{\|x_0 - x^*\|^2}{2\beta} + \left(\tfrac{5\eta_2 L^2_{K, 1}}{4\mu_d}  - \tfrac{\eta_2}{2\eta_1}\right)\|x_1 - x_0\|^2 \right].
\end{align*}
As a consequence, we obtain
\begin{align*}
\mu_d \|\widetilde y_k\|
  &\leq \mu_d \sqrt{4\|y^*\|^2+\tfrac{4}{\mu_d(\sum_{t=1}^k \eta_t)}\left[\tfrac{\|x_0 - x^*\|^2}{2\beta} + \left(\tfrac{5\eta_2 L^2_{K, 1}}{4\mu_d}  - \tfrac{\eta_2}{2\eta_1}\right)\|x_1 - x_0\|^2 \right]}\\
&\leq 2\mu_d \|y^*\| + 2 \sqrt{\tfrac{\mu_d}{\sum_{t=1}^k \eta_t}\left[\tfrac{\|x_0 - x^*\|^2}{2\beta} + \left(\tfrac{5\eta_2 L^2_{K, 1}}{4\mu_d}  - \tfrac{\eta_2}{2\eta_1}\right)\|x_1 - x_0\|^2 \right]},
\end{align*}
which completes the proof.
\end{proof}
\vgap

Analogous to Corollary~\ref{main_corollary_2}, below we establish the convergence guarantees for AC-ADMM equipped with a concrete adaptive stepsize policy.
\begin{corollary} \label{main_corollary_admm}
In the premise of Theorem~\ref{theorem_admm}, suppose $\tau_1=0,~ \tau_2 = \mu_d$, and $\beta \in (0,  1 -{\tfrac{\sqrt{6}}{3}}]$. Also assume that $\eta_2 = \min \left\{{(1 - \beta) \eta_1},  \tfrac{\mu_d}{4 L_{K, 1}^2}\right\}$, and for $t \geq 3$
\begin{align}
&\eta_t = \min \left\{ \tfrac{4}{3}\eta_{t-1}, \tfrac{\tau_{t-2}+\mu_d}{\tau_{t-1}}\cdot \eta_{t-1}, \tfrac{\tau_{t-1}}{ 4L^2_{K, t-1}} \right\},\label{def_eta_t_admm}\\
&\tau_t = \tau_{t-1} + \tfrac{\mu_d}{2}\left[ \alpha  +(1-\alpha)\eta_t\cdot \tfrac{4L^2_{K, t-1}}{\tau_{t-1}}\right] \label{def_tau_t_admm}
\end{align}
for some absolute constant $\alpha \in (0,1]$.
Then we have for $t \geq 2$,
\begin{align}\label{eta_lower_bound_2_admm}
\eta_t \geq \tfrac{(3 + \alpha(t-3))\mu_d}{12 \widehat L^2_{K, t-1}},~~ \text{where} ~~\widehat L_{K, t} := \max\{\sqrt{\tfrac{\mu_d}{4(1-\beta)\eta_1}}, L_{K, 1},...,L_{K, t}\}.
\end{align}
Consequently, we have
\begin{align}
&F(\widehat x_k) + G(\widehat w_k) - F(x^*) - G(w^*)
 \leq \tfrac{12\widehat L^2_{K,k}}{\mu_d(6k + \alpha k(k-3))}\left[\tfrac{\|x_0 - x^*\|^2}{\beta} + \eta_2\left(\tfrac{5 L^2_{K, 1}}{2\mu_d} - \tfrac{1}{2\eta_1}\right)\|x_1 - x_0\|^2 \right], \label{optimality_gap_admm}\\
&\|K \widehat x_k - B \widehat w_k + b\| 
\leq 2\mu_d \|y^*\| + 2 \sqrt{\tfrac{12\widehat L^2_{K,k}}{6k + \alpha k(k-3)}\left[\tfrac{\|x_0 - x^*\|^2}{\beta} + \eta_2\left(\tfrac{5 L^2_{K, 1}}{2\mu_d} - \tfrac{1}{2\eta_1}\right)\|x_1 - x_0\|^2 \right]}.\label{constraint_violation_admm}
\end{align}
\end{corollary}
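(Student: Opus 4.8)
The plan is to mirror the proof of Corollary~\ref{main_corollary_2} almost verbatim, exploiting that the stepsize recursions \eqref{def_eta_t_admm}--\eqref{def_tau_t_admm} coincide with \eqref{def_eta_t}--\eqref{def_tau_t} once $L_{A,t}$ is replaced by $L_{K,t}$, and that Theorem~\ref{theorem_admm} already supplies the analogues \eqref{eq:opt_gap}--\eqref{eq:const_vio} of the bound \eqref{eq_main_theorem} used there. First I would verify that the proposed parameters satisfy the hypotheses of Theorem~\ref{theorem_admm}: since $\beta \le 1-\tfrac{\sqrt 6}{3}$ gives $2(1-\beta)^2 \ge \tfrac 43$, the cap $\tfrac 43\eta_{t-1}$ in \eqref{def_eta_t_admm} does not exceed $2(1-\beta)^2\eta_{t-1}$, and together with the other two terms this yields \eqref{cond_4_admm}; condition \eqref{cond_1_admm} is built into the choice of $\eta_2$, and \eqref{cond_3} is assumed.

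Next I would control $\{\tau_t\}$. Because \eqref{def_eta_t_admm} enforces $\eta_t \le \tau_{t-1}/(4L^2_{K,t-1})$, the bracket in \eqref{def_tau_t_admm} is at most $1$, so $\tau_t \le \tau_{t-1}+\tfrac{\mu_d}{2}$, and induction from $\tau_2=\mu_d$ gives $\tau_t \le \tfrac{t\mu_d}{2}$; retaining only the $\alpha$-part of the bracket gives the matching lower estimate $\tau_t \ge \mu_d + \tfrac{\alpha(t-2)\mu_d}{2}$. Together these give $\tfrac{\tau_{t-1}+\mu_d}{\tau_t} \ge \tfrac{t+1}{t}$ for $t\ge 3$, exactly as in \eqref{bound_tau_ratio}. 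With these in hand I would run the same induction as in Corollary~\ref{main_corollary_2} to prove $\eta_t \ge \tfrac{(3+\alpha(t-3))\mu_d}{12\widehat L^2_{K,t-1}}$ for $t\ge 2$: the base cases $t=2,3$ follow from $\eta_2 = \tfrac{\mu_d}{4\widehat L^2_{K,1}}$ and $\eta_3=\tfrac{\mu_d}{4\widehat L^2_{K,2}}$ by the definition of $\widehat L_{K,t}$ in \eqref{eta_lower_bound_2_admm}, and the inductive step combines the three terms of \eqref{def_eta_t_admm} with the $\tau_t$ estimates above, exactly as in the scalar computation there. This proves \eqref{eta_lower_bound_2_admm}.

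Finally I would use these lower bounds, together with $\widehat L_{K,t}\le\widehat L_{K,k}$, to bound $\sum_{t=1}^k\eta_t$ from below by a quantity proportional to $\mu_d(6k+\alpha k(k-3))/\widehat L^2_{K,k}$, and substitute this into \eqref{eq:opt_gap} and \eqref{eq:const_vio} of Theorem~\ref{theorem_admm}; rearranging (and rescaling the coefficient $\tfrac{5\eta_2 L^2_{K,1}}{4\mu_d}-\tfrac{\eta_2}{2\eta_1}$ accordingly) then yields \eqref{optimality_gap_admm} and \eqref{constraint_violation_admm}, just as \eqref{eq:corollary_2} was derived from \eqref{eq_main_theorem}. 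Since Theorem~\ref{theorem_admm} has already carried out the real analytic work, there is no genuine obstacle here; the only part needing care rather than new ideas is checking that every step behind the $\eta_t$-induction and behind \eqref{eq:opt_gap}--\eqref{eq:const_vio} survives the substitution $A\mapsto K$ — in particular that Lemma~\ref{lemma_optimality_conditions} and its bound \eqref{eq_bound_y_1_y_0_K} play the role of Lemma~\ref{dual_three-point_lemma}, and that the special handling of the indices $t=2,3$ goes through unchanged.
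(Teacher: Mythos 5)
Your proposal is correct and follows essentially the same route as the paper: the paper's own proof simply states that \eqref{eta_lower_bound_2_admm} follows from the argument of Corollary~\ref{main_corollary_2} with $A$ replaced by $K$, and then plugs the resulting lower bound on $\sum_{t=1}^k\eta_t$ into \eqref{eq:opt_gap} and \eqref{eq:const_vio}. You have merely written out explicitly the $\tau_t$ estimates and the $\eta_t$ induction that the paper delegates to that earlier proof.
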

\begin{proof}
The proof of \eqref{eta_lower_bound_2_admm} follows from the same arguments in the proof of Corollary~\ref{main_corollary_2} with $A$ replaced by $K$. Then the results in \eqref{optimality_gap_admm} and \eqref{constraint_violation_admm}
follow by
by further applying the stepsize policy in \eqref{def_eta_t_admm} and \eqref{def_tau_t_admm} to Ineqs. \eqref{eq:opt_gap} and \eqref{eq:const_vio}. 
\end{proof}
\vgap

The bounds in Corollary~\ref{main_corollary_admm} merit some comments. Firstly, the selection of the algorithm parameters in AC-ADMM, as well as its rate of convergence, depends solely on  $K$ rather than $[K, -B]$. This advantage comes at the cost of solving a more complicated subproblem in step \eqref{primal_w_update}. 

Secondly, the stepsize policy in Corollary~\ref{main_corollary_admm} closely mirrors that of AC-PDHG in Corollary~\ref{main_corollary_2}, with $A$ replaced by $K$. Additionally, when $X$ is unbounded, we can also take an intial line search step to ensure that $\eta_1 \leq 2\mu_d/(5 L_{A, 1}^2)$ and consequently eliminate the term $\eta_2\left[5 L^2_{K, 1}/(2\mu_d) - 1/(2\eta_1)\right]\|x_1 - x_0\|^2$ in \eqref{optimality_gap_admm}-\eqref{constraint_violation_admm}.

Thirdly, we can derive from Corollary~\ref{main_corollary_admm} the iteration complexity of AC-ADMM for computing
an $(\epsilon_1,\epsilon_2)$-optimal solution of problem \eqref{bilinear_two_parts}, i.e.,
a pair of solution $(\bar x, \bar w) \in X \times W$ satisfying $F(\bar x) + G(\bar w) - F(x^*) - G(w^*) \leq \epsilon_1$ and $\|K \bar x - B \bar w + b\|\leq \epsilon_2$.
For simplicity, let us assume that $\eta_1 \leq \tfrac{2\mu_d}{5 L_{A, 1}^2}$ and use define the shorthand notation $D_{x^*}:=\|x_0 - x^*\|$ and $D_{y^*} := \|y^*\|$. Then the relations in \eqref{optimality_gap_admm} and \eqref{constraint_violation_admm} reduce to
\begin{align*}
&F(\widehat x_k) + G(\widehat w_k) - F(x^*) - G(w^*) \leq \mathcal{O}\left( \tfrac{\widehat L_{K,k}^2 D_{x^*}^2}{\mu_d k^2}\right),\\
&\|K \widehat x_k- B \widehat w_k + b\|  \leq \mu_d \|\widetilde y_k\| \leq \mathcal{O}\left( \mu_d\|y^*\| + \tfrac{\widehat L_{K,k}D_{x^*}}{k} \right).
\end{align*}
As a result, by setting $\mu_d = \mathcal{O}(\epsilon_2/D_{y^*})$, we can bound the iteration complexity of AC-ADMM by
\begin{align*}
&\mathcal{O}\left( \tfrac{\|K\|D_{x^*}}{\epsilon_2} +\tfrac{\|K\|D_{x^*}\sqrt{D_{y^*}}}{\sqrt{\epsilon_1\epsilon_2}}\right), 
\end{align*}
which reduces to
\begin{align*}
&\mathcal{O}\left( \tfrac{\|K\|D_{x^*} D_{y^*}}{\epsilon}\right)
\end{align*}
for the specific case when $(\epsilon_1, \epsilon_2)= (\epsilon, \epsilon/D_{y^*})$.
This latter complexity bound improves upon the previously best-known complexity  $$\mathcal{O}\left( \tfrac{\|K\|D_{x^*}^2 + D_{y^*}^2 + \|K\|D_{x^*}D_{y^*}}{\epsilon}\right)$$ for the preconditioned ADMM method in \cite{ouyang2015accelerated} in terms of their dependence on
$D_{x^*}$ and $D_{y^*}$. 

Finally, when implementing the AC-ADMM method, we need to substitute $D_{y^*}$ with its estimate $\widehat D_{y^*}$. 
Similar to the AC-PDHG method,
if the feasible region $X$ is bounded, i.e., $\max_{x, y}\|x-y\|\leq D_X$, we can employ a ``guess and check'' procedure as shown below.
Using an argument similar to the one for the AC-PDHG method, 
we can show that this procedure can 
output an $(\epsilon_1,\epsilon_2)$-optimal solution of problem \eqref{bilinear_two_parts}, given by $(\widehat x, \widehat w)$,
within at most $\mathcal{O}\left(\log(\tfrac{D_{y^*}}{\widehat D^{(0)}})\right)$ loops, and that the total number of AC-ADMM iterations
can be bounded by
$$\mathcal{O}\left( \tfrac{\|K\|D_{X}}{\epsilon_2}\log(\tfrac{D_{y^*}}{\widehat D^{(0)}}) +\tfrac{\|K\|D_{X}\sqrt{D_{y^*}}}{\sqrt{\epsilon_1\epsilon_2}}\right).$$

\begin{algorithm}[H]\caption*{A guess and check procedure for AC-ADMM:}\label{alg:ac_admm_guess}
	\begin{algorithmic}
		\State{\textbf{Input}: Choose $\widehat D^{(0)}$ such that $\widehat D^{(0)} \leq D_{y^*}$. Fix $\epsilon_1, \epsilon_2$.}
		\For{$i=0, 1, 2,\cdots$}
		\State{Let $\widehat D^{(i)} := \widehat D^{(0)}\cdot 2^{i}, ~\mu^{(i)} := \tfrac{\epsilon_2}{4D^{(i)}}$. Run Algorithm~\ref{alg:ac_admm} with $\mu_d = \mu^{(i)}$ until the first $\widehat k \geq 3$ such that
  \begin{align*}
  &\mathcal{E}_{1}(\widehat k) \leq \epsilon_1, \text{  and  }\min\left\{\|K \widehat x_{\widehat k}- B \widehat w_{\widehat k} + b\|, \mathcal{E}_{2}(\widehat k)\right\} \leq \epsilon_2,\\
  &\text{where } \mathcal{E}_{1}(k) := \tfrac{12 \widehat L_{K, k}^2 D_X^2}{\mu^{(i)}\beta(6k + \alpha k (k-3))}, \text{  }\mathcal{E}_{2}(k):= 4 \sqrt{\tfrac{12  \widehat L_{K, k}^2 D_X^2}{\beta(6k + \alpha k(k-3))}},  \text{ for $\widehat L_{K, k}$ defined in \eqref{eta_lower_bound_2_admm}.}
  \end{align*}
  }
  \State{If $\|K \widehat x_{\widehat k}- B \widehat w_{\widehat k} + b\| \leq \epsilon_2$: Terminate.}
		\EndFor
\State{\textbf{Output}: $(\widehat x, \widehat w) = (\widehat x_{\widehat k}, \widehat w_{\widehat k})$, $\widehat D_Y= \widehat D^{(i)}$.}
	\end{algorithmic}
\end{algorithm}

\section{Adaptive PDHG and ADMM with acceleration}\label{section:apdhg_aadmm}
The AC-PDHG and AC-ADMM methods developed in Sections \ref{sec:PDHG} and \ref{sec:ADMM} require the assumption that $f(x)$ and $F(x)$ are ``prox-friendly'' (see discussion above \eqref{eq:prox_x}). In this section, we propose accelerated versions of AC-PDHG and AC-ADMM to handle the situation when $f(x)$ and $F(x)$ are not ``prox-friendly'' but general smooth convex functions. 

\subsection{Auto-conditioned accelerated PDHG for bilinear saddle point problems}
In this subsection, we consider the bilinear saddle point problem problem~\eqref{eq:bilinear_saddle_point}, where $g:Y \rightarrow \bbr$ is prox-friendly, and $f:X\rightarrow \bbr$ is a smooth convex function satisfying
\begin{align}\label{eq:smoothness}
\tfrac{1}{2L_f}\|\nabla f(x) -\nabla f(y) \|^2\leq f(y) - f(x) - \langle \nabla f(x), y -x \rangle \leq \tfrac{L_f}{2}\|x -y \|^2, \quad \forall x,y \in X.
\end{align}
To solve this problem, we propose an auto-conditioned accelerated primal-dual hybrid gradient (AC-APDHG) method as shown in Algorithm~\ref{alg:ac_primal_dual_acce}. In comparison to AC-PDHG in Algorithm~\ref{alg:ac_primal_dual}, AC-APDHG introduces an additional sequence of search points $\{\widetilde x_t\}$ in \eqref{primal_output_series_acce}, another sequence taking as the weighted average of $\{x_t\}$. In the prox-mapping step \eqref{primal_prox-mapping_acce}, we compute the gradient at the sequence $\{ \widetilde x_t\}$ and use it to linearize the function $f(x)$. This design of AC-APDHG was inspired the accelerated primal-dual method in \cite{chen2014optimal} that incorporates an acceleration scheme into the PDHG method. However, by utilizing  the specific acceleration method in \cite{li2023simple}, AC-APDHG is not only simpler than the primal-dual method in \cite{chen2014optimal}, but also does not require the input of the operator norm $\|A\|$ and the Lipschitz constant $L_f$.

\begin{algorithm}[h]\caption{Auto-Conditioned Accelerated Primal-Dual Hybrid Gradient Method (AC-APDHG)}\label{alg:ac_primal_dual_acce}
	\begin{algorithmic}
		\State{\textbf{Input}: initial point $x_0 = \widetilde x_0 = \bar x_0 \in X$, $\widetilde y_0 \in Y$, nonnegative parameters $\beta_t \in (0, 1)$, $\eta_t \in \bbr_+$, $\tau_t \in \bbr_+$ and $\widetilde \tau_t \in \bbr_+$. \\
  Let 
  \begin{align}
  y_0 = \arg \min_{y \in Y} \left\{\langle - A x_0, y \rangle+ g(y) + \tfrac{\mu_d}{2}\|\widetilde y_0 - y\|^2  \right\}.\label{prox_update_0_acce}
  \end{align}}
		\For{$t=1,\cdots, k$}
		\State{
  \begin{align}
  x_t &= \arg \min_{x \in X} \left\{\eta_t\langle A^\top y_{t-1} + \nabla f(\widetilde x_{t-1}), x \rangle + \tfrac{1}{2}\|\bar x_{t-1} - x\|^2\right\}, \label{primal_prox-mapping_acce}\\
  \bar x_t &= (1-\beta_t) \bar x_{t-1} + \beta_t x_t, \label{primal_prox-center_acce}\\
  \widetilde x_t &= (x_{t} + \widetilde\tau_t \widetilde x_{t-1})/({1 + \widetilde \tau_t}), \label{primal_output_series_acce}\\
  y_t & = \arg \min_{y \in Y} \left\{\langle -Ax_t, y\rangle+ g(y) + \tfrac{\mu_d}{2}\|\widetilde y_0 - y\|^2  + \tfrac{\tau_t}{2}\|y_{t-1} - y\|^2\right\}. \label{dual_prox-mapping_acce}
  \end{align}
  }
		\EndFor
	\end{algorithmic}
\end{algorithm}

In place of the global constants $L_f$ and $\|A\|$, AC-APDHG utilizes local estimators of $\|A\|$, as defined in \eqref{def_L_A_t}, as well as local estimators of $L_f$ defined as
\begin{align}
L_{f, 1} := \tfrac{\|\nabla f(\widetilde x_1) - \nabla f(\widetilde x_0)\|}{\|\widetilde x_1 - \widetilde x_0\|}\label{def_L_f_1}
\end{align}
and
\begin{align}\label{def_L_f_t}
L_{f, t} := \tfrac{\|\nabla f(\widetilde x_t) - \nabla f(\widetilde x_{t-1})\|^2}{2[f(\widetilde x_{t-1}) - f(\widetilde x_t) - \langle\nabla f(\widetilde x_t), \widetilde x_{t-1} - \widetilde x_t \rangle]}, t\ge 2
\end{align}
to determine a few algorithmic parameters. In particular, the stepsize parameters ($\eta_t, \tau_t$ and $\tilde \tau_t$) used in the $t$-th iteration of AC-APDHG will be specified based on the estimators $L_{A, i}$ and $ L_{f, i}$, $i=1, t-1$, computed in the previous iterations,  but not on $L_{A, t}$ or $L_{f, t}$ obtained in the current iteration. 

\vspace{0.1in}

Analogous to Proposition~\ref{proposition_1}, we now specify the stepsize conditions that Algrithm~\ref{alg:ac_primal_dual_acce} needs to satisfy, along with a recursive relationship for the primal-dual gap function $Q(\widehat z_k, z)$.

\begin{proposition}\label{proposition_1_acce}
Let $\{x_t\}, \{\widetilde x_t\}, \{\bar x_t\}$ and $\{y_t\}$ be generated by Algorithm~\ref{alg:ac_primal_dual} with the parameters $\{\tau_t\}$, $\{\eta_t\}$, $\{\beta_t\}$ and $\{\widetilde \tau_t\}$ satisfying
\begin{align}
\tau_1 &= 0, ~\widetilde \tau_1 = 0, ~\beta_1 = 0,~\beta_t = \beta > 0,~t\geq 2, \label{cond_3_acce}\\
    \eta_2 &\leq \min\left\{{(1-\beta)\eta_1}, \left( \tfrac{4 L^2_{A,1}}{\mu_d} + 4 L_{f, 1}\right)^{-1} \right\},\label{cond_1_acce}\\ 
    \eta_t &\leq \min\left\{2(1-\beta)^2 \eta_{t-1}, \tfrac{\widetilde\tau_{t-2}+1}{\widetilde \tau_{t-1}}\eta_{t-1}, \tfrac{\tau_{t-2}+\mu_d}{ \tau_{t-1}}\eta_{t-1}, \left(\tfrac{4L_{A, t-1}^2}{\tau_{t-1}} +  \tfrac{4L_{f, t-1}}{\widetilde \tau_{t-1}}\right)^{-1}\right\}, ~t \geq 3.
    \label{cond_4_acce}
\end{align}
We have for any $z = (x,y) \in Z$,
\begin{align}\label{eq:proposition_1_acce}
&\left(\tsum_{t=1}^k \eta_t\right) \cdot \left(Q(\widehat z_k, z)- \tfrac{\mu_d}{2}\|\widetilde y_0 - y\|^2 + \tfrac{\mu_d}{2}\|\widetilde y_0 - \widehat y_k\|^2+ \tfrac{\mu_d}{2}\|\widetilde y_k - y\|^2\right) + \tfrac{\eta_2}{2\eta_{1}} \left[\|x_1 - \bar x_1\|^2 + \|x_2 - x_1\|^2 \right]\nn\\
 &\leq \tfrac{1}{2\beta}\|x_0 - x\|^2 - \tfrac{1}{2\beta}\|\bar x_{k+1} - x\|^2 + \tsum_{t=2}^{k+1}\widetilde\Delta_{t},
\end{align}
where $\widehat z_k$ is defined in \eqref{def_hat_x_k_y_k},  $\widetilde y_k$ is defined in \eqref{def_tilde_y_k}, and
\begin{align}\label{def_tilde_Delta_t}
\widetilde \Delta_t &:= \eta_t \langle A^\top (y_{t-1} - y_{t-2}), x_{t-1} - x_t\rangle - \tfrac{\eta_t \tau_{t-1}}{2}\|y_{t-2} - y_{t-1}\|^2 + \eta_t \langle \nabla f(\widetilde x_{t-1}) - \nabla f(\widetilde x_{t-2}), x_{t-1} - x_t \rangle \nn\\
& ~~\quad - \tfrac{\eta_t \widetilde \tau_{t-1}}{2 L_{f, t-1}}\|\nabla f(\widetilde x_{t-1}) - \nabla f(\widetilde x_{t-2})\|^2 - \tfrac{1}{2}\|x_t - \bar x_{t-1}\|^2.
\end{align}

\end{proposition}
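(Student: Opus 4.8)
The plan is to mirror the proof of Proposition~\ref{proposition_1} almost verbatim; the only genuinely new ingredient is the treatment of the smooth term $f$ through the auxiliary averaging sequence $\{\widetilde x_t\}$ and the local estimate $L_{f,t}$, in the spirit of the AC-FGM method \cite{li2023simple}.

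First I would write the optimality condition of the prox-mapping \eqref{primal_prox-mapping_acce} at iterations $t$ and $t-1$; as in the derivation of \eqref{eq3}, I would use $x_{t-1}-\bar x_{t-2}=\tfrac{1}{1-\beta_{t-1}}(x_{t-1}-\bar x_{t-1})$ and rescale the $(t-1)$-st condition by $\eta_t/\eta_{t-1}$. Summing the two conditions, the terms carrying $A^\top y$ collapse into $\eta_t\langle A^\top y_{t-1},x-x_{t-1}\rangle+\eta_t\langle A^\top(y_{t-1}-y_{t-2}),x_{t-1}-x_t\rangle$ exactly as before, while the terms carrying $\nabla f$ collapse into $\eta_t\langle\nabla f(\widetilde x_{t-1}),x-x_{t-1}\rangle+\eta_t\langle\nabla f(\widetilde x_{t-1})-\nabla f(\widetilde x_{t-2}),x_{t-1}-x_t\rangle$; the second piece is one of the new contributions to $\widetilde\Delta_t$.

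The crux is to turn $\eta_t\langle\nabla f(\widetilde x_{t-1}),x_{t-1}-x\rangle$ into function values. I would split it using $\widetilde x_{t-1}$ as anchor, $\langle\nabla f(\widetilde x_{t-1}),x_{t-1}-x\rangle=\langle\nabla f(\widetilde x_{t-1}),\widetilde x_{t-1}-x\rangle+\langle\nabla f(\widetilde x_{t-1}),x_{t-1}-\widetilde x_{t-1}\rangle$. Convexity of $f$ bounds the first term below by $f(\widetilde x_{t-1})-f(x)$. For the second term, \eqref{primal_output_series_acce} yields the identity $\widetilde x_{t-1}-x_{t-1}=\widetilde\tau_{t-1}(\widetilde x_{t-2}-\widetilde x_{t-1})$, while the definition \eqref{def_L_f_t} of $L_{f,t-1}$ gives the exact identity $\langle\nabla f(\widetilde x_{t-1}),\widetilde x_{t-2}-\widetilde x_{t-1}\rangle=f(\widetilde x_{t-2})-f(\widetilde x_{t-1})-\tfrac{1}{2L_{f,t-1}}\|\nabla f(\widetilde x_{t-1})-\nabla f(\widetilde x_{t-2})\|^2$; combining them produces $\eta_t\langle\nabla f(\widetilde x_{t-1}),x_{t-1}-x\rangle\ge\eta_t[f(\widetilde x_{t-1})-f(x)]+\eta_t\widetilde\tau_{t-1}[f(\widetilde x_{t-1})-f(\widetilde x_{t-2})]+\tfrac{\eta_t\widetilde\tau_{t-1}}{2L_{f,t-1}}\|\nabla f(\widetilde x_{t-1})-\nabla f(\widetilde x_{t-2})\|^2$, whose last term is the remaining new contribution to $\widetilde\Delta_t$ (subtracted once moved to the right). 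From here the argument is word-for-word that of Proposition~\ref{proposition_1} with $\Delta_t$ replaced by $\widetilde\Delta_t$: apply the polarization identity $2\langle x-y,z-x\rangle=\|y-z\|^2-\|x-y\|^2-\|x-z\|^2$ to the prox-center inner products, use the expansion \eqref{eq5} of $\|x_t-x\|^2$, combine with the dual three-point inequality \eqref{eq_dual_three-point_lemma} (valid verbatim since the dual update is unchanged), handle $t=2$ via $\beta_1=\widetilde\tau_1=\tau_1=0$ and $\eta_2\le(1-\beta)\eta_1$ and $t=1$ via \eqref{eq_dual_three-point_lemma}, and telescope over $t=3,\dots,k+1$ using the remaining conditions in \eqref{cond_4_acce}. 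This leaves $\sum_{t=1}^k\eta_{t+1}[f(\widetilde x_t)+\widetilde\tau_t(f(\widetilde x_t)-f(\widetilde x_{t-1}))]$ on the left together with the $A$-, $g$- and quadratic terms, and $\tfrac{1}{2\beta}\|x_0-x\|^2+\sum_{t=2}^{k+1}\widetilde\Delta_t$ on the right.

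The last step is the AC-FGM-style bookkeeping. By Abel summation and $\widetilde\tau_1=0$, $\sum_{t=1}^k\eta_{t+1}[f(\widetilde x_t)+\widetilde\tau_t(f(\widetilde x_t)-f(\widetilde x_{t-1}))]$ equals $\sum_{t=1}^{k-1}(\eta_{t+1}(1+\widetilde\tau_t)-\eta_{t+2}\widetilde\tau_{t+1})f(\widetilde x_t)+\eta_{k+1}(1+\widetilde\tau_k)f(\widetilde x_k)$; the condition $\eta_t\le\tfrac{\widetilde\tau_{t-2}+1}{\widetilde\tau_{t-1}}\eta_{t-1}$ from \eqref{cond_4_acce} makes all these coefficients nonnegative, and they telescope to $\sum_{t=1}^k\eta_{t+1}$. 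Moreover the identity $x_t=(1+\widetilde\tau_t)\widetilde x_t-\widetilde\tau_t\widetilde x_{t-1}$ (again using $\widetilde\tau_1=0$) shows the induced convex combination of the $\widetilde x_t$ is exactly $\widehat x_k$ from \eqref{def_hat_x_k_y_k}. Hence convexity of $f$ gives $\sum_{t=1}^k\eta_{t+1}[f(\widetilde x_t)+\widetilde\tau_t(\cdots)]\ge\big(\sum_{t=1}^k\eta_{t+1}\big)f(\widehat x_k)$; applying convexity of $x\mapsto\langle Ax,y\rangle$, of $g$, and of $\|\cdot\|^2$ to the remaining sums (including the passage to $\widetilde y_k$ as in \eqref{use_tilde_y_k}) exactly as in Proposition~\ref{proposition_1} assembles the left-hand side into $\big(\sum_{t=1}^k\eta_t\big)\big(Q(\widehat z_k,z)-\tfrac{\mu_d}{2}\|\widetilde y_0-y\|^2+\tfrac{\mu_d}{2}\|\widetilde y_0-\widehat y_k\|^2+\tfrac{\mu_d}{2}\|\widetilde y_k-y\|^2\big)$, which is \eqref{eq:proposition_1_acce}.

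I expect the main obstacle to be purely organizational: keeping the index shifts straight so that the telescoped convex combination of the $\widetilde x_t$ collapses exactly to $\widehat x_k$ (the average of the $x_t$), and verifying that the new stepsize conditions in \eqref{cond_1_acce}--\eqref{cond_4_acce}, which now bundle $\tfrac{4L_{A,t-1}^2}{\tau_{t-1}}+\tfrac{4L_{f,t-1}}{\widetilde\tau_{t-1}}$ into a single reciprocal, are exactly what is needed both to keep the coefficients of $f(\widetilde x_t)$ nonnegative and to reproduce the $-\tfrac12\|x_t-\bar x_{t-1}\|^2$ that survives after absorbing $\|x_{t-1}-\bar x_{t-1}\|^2+\|x_t-x_{t-1}\|^2\ge\tfrac12\|x_t-\bar x_{t-1}\|^2$. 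The one subtle sign to watch is the direction of the $L_{f,t}$ identity: it comes from the strong-convexity-of-the-conjugate half of \eqref{eq:smoothness}, so $L_{f,t}\le L_f$ and the identity contributes a \emph{subtracted} gradient-norm term that must land inside $\widetilde\Delta_t$ with the negative sign shown in \eqref{def_tilde_Delta_t}.
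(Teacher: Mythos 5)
Your proposal is correct and follows essentially the same route as the paper's proof: the same optimality-condition summation for \eqref{primal_prox-mapping_acce}, the same exact identity $\langle\nabla f(\widetilde x_{t-1}),\widetilde x_{t-2}-\widetilde x_{t-1}\rangle=f(\widetilde x_{t-2})-f(\widetilde x_{t-1})-\tfrac{1}{2L_{f,t-1}}\|\nabla f(\widetilde x_{t-1})-\nabla f(\widetilde x_{t-2})\|^2$ obtained from \eqref{def_L_f_t} together with $\widetilde x_{t-1}-x_{t-1}=\widetilde\tau_{t-1}(\widetilde x_{t-2}-\widetilde x_{t-1})$, the unchanged dual three-point lemma, and the Abel-summation/convexity step that collapses the weighted $f(\widetilde x_t)$ terms to $(\sum_t\eta_{t+1})f(\widehat x_k)$ using $x_t=(1+\widetilde\tau_t)\widetilde x_t-\widetilde\tau_t\widetilde x_{t-1}$ and the nonnegativity of the coefficients guaranteed by $\eta_t\le\tfrac{\widetilde\tau_{t-2}+1}{\widetilde\tau_{t-1}}\eta_{t-1}$. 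The only cosmetic difference is that you apply convexity of $f$ to $\langle\nabla f(\widetilde x_{t-1}),\widetilde x_{t-1}-x\rangle$ immediately rather than at the telescoping stage, which changes nothing.
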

\begin{proof}
First, by the optimality condition of step \eqref{primal_prox-mapping_acce}, we have for $t \geq 1$,
\begin{align}\label{eq1_acce}
\langle \eta_t [A^\top y_{t-1} + \nabla f(\widetilde x_{t-1})]+ x_t - \bar x_{t-1}, x - x_t\rangle \geq 0, \quad \forall x \in X,
\end{align}
and consequently for $t \geq 2$,
\begin{align}\label{eq2_acce}
\langle \eta_{t-1} [A^\top y_{t-2} + \nabla f(\widetilde x_{t-2})]+ x_{t-1} - \bar x_{t-2}, x_t - x_{t-1}\rangle \geq 0.
\end{align}
Using the relation $x_{t-1} - \bar x_{t-2} = \tfrac{1}{1-\beta_{t-1}} (x_{t-1}-\bar x_{t-1})$ due to \eqref{primal_prox-center_acce}, we can equivalently write \eqref{eq2_acce} as
\begin{align}\label{eq3_acce}
\langle \eta_t [A^\top y_{t-2} + \nabla f(\widetilde x_{t-2})] + \tfrac{\eta_t}{\eta_{t-1}(1-\beta_{t-1})}(x_{t-1} - \bar x_{t-1}), x_t - x_{t-1}\rangle \geq 0. 
\end{align}
By summing up \eqref{eq1_acce} and \eqref{eq3_acce} and rearranging the terms, we obtain for $t \geq 2$,
\begin{align*}
&\eta_t \langle A^\top y_{t-1} + \nabla f(\widetilde x_{t-1}), x - x_{t-1}\rangle + \eta_t \langle A^\top (y_{t-1} - y_{t-2}) + [\nabla f(\widetilde x_{t-1}) - \nabla f(\widetilde x_{t-2})], x_{t-1}-x_t \rangle\\ 
&+ \langle x_t - \bar x_{t-1}, x - x_t\rangle + \tfrac{\eta_t}{\eta_{t-1}(1-\beta_{t-1})} \langle x_{t-1}-\bar x_{t-1}, x_t - x_{t-1} \rangle \geq 0.
\end{align*}
Then, by using the same arguments in Ineqs.~\eqref{eq4} and \eqref{eq5} in the proof of Proposition~\ref{proposition_1}, we obtain an analog of \eqref{eq6}, stating that
\begin{align}\label{eq6_acce}
&\eta_t \langle A^\top y_{t-1} + \nabla f(\widetilde x_{t-1}), x_{t-1} - x\rangle + \tfrac{\eta_t}{2\eta_{t-1}(1-\beta_{t-1})} \left[\|x_{t-1} - \bar x_{t-1}\|^2 + \|x_t - x_{t-1}\|^2 \right]\nn\\
&\leq \tfrac{1}{2\beta_t}\|\bar x_{t-1} - x\|^2 - \tfrac{1}{2\beta_t} \|\bar x_t - x\|^2 +  \eta_t \langle A^\top (y_{t-1} - y_{t-2}) + [\nabla f(\widetilde x_{t-1}) - \nabla f(\widetilde x_{t-2})], x_{t-1}-x_t \rangle\nn\\
&\quad - \left[ \tfrac{1}{2} + \tfrac{1 - \beta_t}{2} - \tfrac{\eta_t}{2 \eta_{t-1}(1-\beta_{t-1})} \right]\|x_t - \bar x_{t-1}\|^2.
\end{align}
When $t \geq 3$, by utilizing the fact that $\|x_{t-1} - \bar x_{t-1}\|^2 + \|x_t - x_{t-1}\|^2 \geq \tfrac{1}{2}\|x_t - \bar x_{t-1}\|^2$ and rearranging the terms in Ineq.~\eqref{eq6_acce}, we have
\begin{align}\label{eq6_prime_acce}
&\eta_t \langle A^\top y_{t-1} + \nabla f(\widetilde x_{t-1}), x_{t-1} - x\rangle  + \tfrac{1}{2\beta_t} \|\bar x_t - x\|^2 \nn\\
&\leq \tfrac{1}{2\beta_t}\|\bar x_{t-1} - x\|^2 +  \eta_t \langle A^\top (y_{t-1} - y_{t-2}) + [\nabla f(\widetilde x_{t-1}) - \nabla f(\widetilde x_{t-2})], x_{t-1}-x_t \rangle\nn\\
&\quad - \left[ \tfrac{1}{2} + \tfrac{1 - \beta_t}{2} - \tfrac{\eta_t}{4 \eta_{t-1}(1-\beta_{t-1})} \right]\|x_t - \bar x_{t-1}\|^2\nn\\
&\overset{(i)}\leq \tfrac{1}{2\beta_t}\|\bar x_{t-1} - x\|^2 +  \eta_t \langle A^\top (y_{t-1} - y_{t-2}) + 
[\nabla f(\widetilde x_{t-1}) - \nabla f(\widetilde x_{t-2})], x_{t-1}-x_t \rangle- \tfrac{1}{2}\|x_t - \bar x_{t-1}\|^2,
\end{align}
where step (i) follows from $\eta_t \leq 2(1-\beta)^2 \eta_{t-1}$ in \eqref{cond_4_acce}. On the other hand, using
the fact $\widetilde x_{t-1}-x_{t-1} = \widetilde \tau_{t-1}(\widetilde x_{t-2}-\widetilde x_{t-1})$ due to \eqref{primal_output_series_acce} and
 the definition of $L_{f,t}$ in \eqref{def_L_f_t}, we have
\begin{align}
&\widetilde \tau_{t-1}f(\widetilde x_{t-1}) + \langle \nabla f(\widetilde x_{t-1}), \widetilde x_{t-1} - x \rangle  - \langle \nabla f(\widetilde x_{t-1}), x_{t-1}-x \rangle \nn \\
&= \widetilde \tau_{t-1}\left[f(\widetilde x_{t-2}) - \tfrac{\|\nabla f(\widetilde x_{t-1}) - \nabla f(\widetilde x_{t-2})\|^2}{2L_{f, t-1}}\right].\label{eq_L_f_t}
\end{align}
Also notice that Lemma~\ref{dual_three-point_lemma} still holds for Algorithm~\ref{alg:ac_primal_dual_acce}, since the dual variable $y_t$ are updated in the same way for both Algorithm~\ref{alg:ac_primal_dual} and Algorithm~\ref{alg:ac_primal_dual_acce}. Thus, by
combining Ineq.~\eqref{eq6_prime_acce}, Eq. \eqref{eq_L_f_t}, and Ineq.~\eqref{eq_dual_three-point_lemma} in Lemma~\ref{dual_three-point_lemma}, and rearranging the terms, we have for $t \geq 3$ and any $(x,y)\in X \times Y$,
\begin{align}
&\eta_t \left[\langle A x_{t-1}, y \rangle - \tfrac{\mu_d}{2}\|\widetilde y_0 - y\|^2 - g(y) - \langle Ax, y_{t-1}\rangle + \tfrac{\mu_d}{2}\|\widetilde y_0 - y_{t-1}\|^2 + g(y_{t-1}) \right]\nn\\
& + \eta_t  [\widetilde \tau_{t-1} f(\widetilde x_{t-1}) + \langle \nabla f(\widetilde x_{t-1}), \widetilde x_{t-1} - x \rangle - \widetilde \tau_{t-1} f(\widetilde x_{t-2})]  \nn\\
& \leq \tfrac{1}{2\beta_t}\|\bar x_{t-1} - x\|^2 - \tfrac{1}{2\beta_t}\|\bar x_{t} - x\|^2 + \tfrac{\eta_t\tau_{t-1}}{2}\|y_{t-2}- y\|^2 - \tfrac{\eta_t(\mu_d + \tau_{t-1})}{2}\|y_{t-1} - y\|^2\nn\\
& \quad + \eta_t \langle A^\top (y_{t-1} - y_{t-2}) + [\nabla f(\widetilde x_{t-1}) - \nabla f(\widetilde x_{t-2})], x_{t-1}-x_t \rangle - \tfrac{\eta_t \tau_{t-1}}{2}\|y_{t-2} - y_{t-1}\|^2\nn\\
&\quad -\tfrac{\eta_t \widetilde \tau_{t-1}}{2L_{f,t-1}}\|\nabla f(\widetilde x_{t-1}) - \nabla f(\widetilde x_{t-2})\|^2 - \tfrac{1}{2}\|x_t - \bar x_{t-1}\|^2.\label{eq9_acce}
\end{align}
For the case when $t=2$, by using the condition $\beta_1 = 0$ in \eqref{eq6_acce}, we obtain
\begin{align}
&\eta_2 \langle A^\top y_{1}, x_{1} - x\rangle + \eta_2\langle \nabla f(\widetilde x_1), x_1 - x \rangle  + \tfrac{\eta_t}{2\eta_{t-1}} \left[\|x_{1} - \bar x_{1}\|^2 + \|x_2 - x_{1}\|^2 \right]\nn\\
&\leq \tfrac{1}{2\beta_2}\|\bar x_{1} - x\|^2 - \tfrac{1}{2\beta_2} \|\bar x_2 - x\|^2 +  \eta_2 \langle A^\top (y_{1} - y_{0}) +  [\nabla f(\widetilde x_{1}) - \nabla f(\widetilde x_{0})], x_{1}-x_2 \rangle\nn\\
&\quad - \left[ \tfrac{1}{2} + \tfrac{1 - \beta_t}{2} - \tfrac{\eta_t}{2 \eta_{t-1}} \right]\|x_t - \bar x_{t-1}\|^2\nn\\
&\overset{(i)}\leq \tfrac{1}{2\beta_2}\|\bar x_{1} - x\|^2 - \tfrac{1}{2\beta_2} \|\bar x_2 - x\|^2 +  \eta_2 \langle A^\top (y_{1} - y_{0})+  [\nabla f(\widetilde x_{1}) - \nabla f(\widetilde x_{0})], x_{1}-x_2 \rangle -\tfrac{1}{2}\|x_2 - \bar x_{1}\|^2,\label{eq9_3_acce}
\end{align}
where step (i) follows from the condition $\eta_2 \leq (1-\beta)\eta_1$ in \eqref{cond_1_acce}.
Then, by combining the above inequality with Ineq.~\eqref{eq_dual_three-point_lemma} for $t=1$, and using $\tau_1=0$, we have
\begin{align}\label{eq9_2_acce}
&\eta_2 \left[\langle A x_{1}, y \rangle - \tfrac{\mu_d}{2}\|\widetilde y_0 - y\|^2 - g(y) - \langle Ax, y_{1}\rangle + \tfrac{\mu_d}{2}\|\widetilde y_0 - y_{1}\|^2 + g(y_{1}) \right]\nn\\
&   \quad + \eta_2\langle \nabla f(\widetilde x_1), x_1 - x \rangle + \tfrac{\eta_2}{2\eta_{1}} \left[\|x_1 - \bar x_1\|^2 + \|x_2 - x_1\|^2 \right] + \tfrac{\eta_2\mu_d}{2}\|y_{1} - y\|^2+ \tfrac{1}{2\beta_2} \|\bar x_2 - x\|^2\nn\\
&\leq \tfrac{1}{2\beta_2}\|\bar x_{1} - x\|^2 +  \eta_2 \langle A^\top (y_1-y_0)+  [\nabla f(\widetilde x_{1}) - \nabla f(\widetilde x_{0})], x_{1} - x_2 \rangle-\tfrac{1}{2}\|x_2 - \bar x_{1}\|^2.
\end{align}
By taking the summation of Ineq.~\eqref{eq9_2_acce} and the telescope sum of Ineq.~\eqref{eq9_acce} for $t=3,..., k+1$, using $\langle \nabla f(x_t), x_t - x\rangle \geq f(x_t) - f(x)$ due to the convexity of $f$, and noting that $\beta_t = \beta$ for $t \geq 2$, we obtain
\begin{align}\label{eq_final_acce}
& \tsum_{t=1}^k \eta_{t+1} \left[\langle A x_{t}, y \rangle - \tfrac{\mu_d}{2}\|\widetilde y_0 - y\|^2 - g(y) - \langle Ax, y_{t}\rangle + \tfrac{\mu_d}{2}\|\widetilde y_0 - y_{t}\|^2 + g(y_{t}) \right]\nn\\
& + \tsum_{t=1}^{k-1} \left[(\widetilde \tau_t + 1)\eta_{t+1} - \widetilde \tau_{t+1}\eta_{t+2}\right]\cdot (f(\widetilde x_t) - f(x)) + (\widetilde \tau_k +1)\eta_{k+1}(f(\widetilde x_k)-f(x)) + \tfrac{1}{2\beta}\|\bar x_{k+1} - x\|^2\nn\\
&  + \tfrac{\eta_2}{2\eta_{1}} \left[\|x_1 - \bar x_1\|^2 + \|x_2 - x_1\|^2 \right] + \tfrac{\eta_{k+1}(\mu_d + \tau_k)}{2}\|y_k - y\|^2 + \tsum_{t=1}^{k-1}\tfrac{\eta_{t+1}(\mu_d + \tau_t) - \eta_{t+2}\tau_{t+1}}{2}\|y_t - y\|^2\nn\\
& \leq \tfrac{1}{2\beta}\|x_0 - x\|^2 + \tsum_{t=2}^{k+1}\widetilde\Delta_{t},
\end{align}
where $\widetilde \Delta_t$ is defined in \eqref{def_tilde_Delta_t}.

Next, we provide lower bounds for the terms in the left-hand side of the above inequality. It follows from the condition $\eta_t \leq \eta_{t-1} (\widetilde \tau_{t-2}+1) / \widetilde \tau_{t-1}$, the fact $x_t = (\widetilde \tau_t +1)\widetilde x_t - \widetilde \tau_t \widetilde x_{t-1}$, the definition of $\widehat x_k$ in \eqref{def_hat_x_k_y_k}, and  the convexity of $f$ that
\begin{align*}
\tsum_{t=1}^{k-1} \left[(\widetilde \tau_t + 1)\eta_{t+1} - \widetilde \tau_{t+1}\eta_{t+2}\right]\cdot (f(\widetilde x_t) - f(x)) + (\widetilde \tau_k +1)\eta_{k+1}(f(\widetilde x_k)-f(x)) \geq (\tsum_{t=1}^k \eta_{t+1}) \cdot (f(\widehat x_k) -f(x)).
\end{align*}
Moreover, by utilizing the definition of $\widehat y_k$ in \eqref{def_hat_x_k_y_k}, the convexity of $\|\cdot\|^2$, $f$ and $g$, and the definition of $Q(\cdot, \cdot)$, we have
\begin{align*}
& \tsum_{t=1}^k \eta_{t+1} \left[\langle A x_{t}, y \rangle - \tfrac{\mu_d}{2}\|\widetilde y_0 - y\|^2 - g(y) - \langle Ax, y_{t}\rangle + \tfrac{\mu_d}{2}\|\widetilde y_0 - y_{t}\|^2 + g(y_{t}) \right]\nn\\
& + \tsum_{t=1}^{k-1} \left[(\widetilde \tau_t + 1)\eta_{t+1} - \widetilde \tau_{t+1}\eta_{t+2}\right]\cdot (f(\widetilde x_t) - f(x)) + (\widetilde \tau_k +1)\eta_{k+1}(f(\widetilde x_k)-f(x))\nn\\
& \geq (\tsum_{t=1}^k \eta_{t+1}) \cdot \big( Q(\widehat z_k, z) - \tfrac{\mu_d}{2}\|\widetilde y_0 - y\|^2 + \tfrac{\mu_d}{2}\|\widetilde y_0 - \widehat y_k\|^2\big).
\end{align*}
The desired result then follows by substituting the above two inequalities and Ineq.~\eqref{use_tilde_y_k} into ~\eqref{eq_final_acce}.
\end{proof}
\vgap

We are now ready to 
describe the main convergence properties of AC-APDHG for solving the bilinear saddle point problem \eqref{eq:bilinear_saddle_point} with smooth $f$. 
\begin{theorem}\label{main_theorem_acce}
Let $\{x_t\}, \{\widetilde x_t\}, \{\bar x_t\}$ and $\{y_t\}$ be generated by Algorithm~\ref{alg:ac_primal_dual_acce} with algorithmic parameters satisfying \eqref{cond_3_acce}-\eqref{cond_4_acce}.
Then we have for any $z = (x,y) \in Z$ and $k \geq 2$, 
\begin{align}\label{eq_main_theorem_acce}
&Q(\widehat z_k, z)- \tfrac{\mu_d}{2}\|\widetilde y_0 - y\|^2 + \tfrac{\mu_d}{2}\|\widetilde y_0 - \widehat y_k\|^2 + \tfrac{\mu_d}{2}\|\widetilde y_k - y\|^2 \nn\\
 &\leq \tfrac{1}{\sum_{t=1}^k \eta_t}\left[\tfrac{\|x_0 - x\|^2}{2\beta} + \left(\tfrac{5\eta_2 L^2_{A, 1}}{4\mu_d} + \tfrac{5\eta_2 L_{f,1}}{4} - \tfrac{\eta_2}{2\eta_1}\right)\|x_1 - x_0\|^2 - \tfrac{\|x_{k+1} - \bar x_k\|^2}{4} - \tfrac{\|\bar x_{k+1} - x\|^2}{2\beta}\right],
\end{align}
where $\widehat z_k$ and $\widetilde y_k$ are defined in \eqref{def_hat_x_k_y_k} and \eqref{def_tilde_y_k}, respectively.
\end{theorem}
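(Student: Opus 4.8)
The plan is to mimic the proof of Theorem~\ref{main_theorem}, starting from the recursion \eqref{eq:proposition_1_acce} of Proposition~\ref{proposition_1_acce} and producing an upper bound for the accumulated error $\tsum_{t=2}^{k+1}\widetilde\Delta_t$, with $\widetilde\Delta_t$ as in \eqref{def_tilde_Delta_t}. The only structurally new feature relative to $\Delta_t$ is that each $\widetilde\Delta_t$ carries, besides the bilinear cross term $\eta_t\langle A^\top(y_{t-1}-y_{t-2}),x_{t-1}-x_t\rangle$ and the quadratic $-\tfrac{\eta_t\tau_{t-1}}{2}\|y_{t-2}-y_{t-1}\|^2$, a gradient cross term $\eta_t\langle\nabla f(\widetilde x_{t-1})-\nabla f(\widetilde x_{t-2}),x_{t-1}-x_t\rangle$ together with $-\tfrac{\eta_t\widetilde\tau_{t-1}}{2L_{f,t-1}}\|\nabla f(\widetilde x_{t-1})-\nabla f(\widetilde x_{t-2})\|^2$. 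I would treat the two cross terms in parallel: Cauchy--Schwarz plus the definitions \eqref{def_L_A_t} and \eqref{def_L_f_t} of $L_{A,t-1}$ and $L_{f,t-1}$, followed by Young's inequality, absorbs each cross term against its companion quadratic, leaving $\widetilde\Delta_t\le\eta_t\bigl(\tfrac{L_{A,t-1}^2}{2\tau_{t-1}}+\tfrac{L_{f,t-1}}{2\widetilde\tau_{t-1}}\bigr)\|x_{t-1}-x_t\|^2-\tfrac12\|x_t-\bar x_{t-1}\|^2$ for $t\ge 3$.

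Then, exactly as in the chain \eqref{bound_delta_t}, I would use $\|x_{t-1}-x_t\|^2\le 2(1-\beta)^2\|x_{t-1}-\bar x_{t-2}\|^2+2\|x_t-\bar x_{t-1}\|^2$ (a consequence of \eqref{primal_prox-center_acce}) together with the last clause of the stepsize condition \eqref{cond_4_acce}, namely $\eta_t\le\bigl(\tfrac{4L_{A,t-1}^2}{\tau_{t-1}}+\tfrac{4L_{f,t-1}}{\widetilde\tau_{t-1}}\bigr)^{-1}$ (which is designed precisely so that $\eta_t$ times the sum of the two local curvature ratios is at most $\tfrac14$), along with $(1-\beta)^2\le 1$, to collapse the above to the telescoping bound $\widetilde\Delta_t\le\tfrac14\|x_{t-1}-\bar x_{t-2}\|^2-\tfrac14\|x_t-\bar x_{t-1}\|^2$ for $t\ge 3$; hence $\tsum_{t=3}^{k+1}\widetilde\Delta_t\le\tfrac14\|x_2-\bar x_1\|^2-\tfrac14\|x_{k+1}-\bar x_k\|^2$.

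The base term $\widetilde\Delta_2$ requires separate care, following \eqref{bound_delta_2}. Since $\widetilde\tau_1=0$ forces $\widetilde x_0=x_0$ and $\widetilde x_1=x_1$, and $\tau_1=0$, the two companion quadratics drop out of $\widetilde\Delta_2$; instead I would bound the bilinear cross term using $\|y_1-y_0\|\le\tfrac{L_{A,1}}{\mu_d}\|x_1-x_0\|$ from \eqref{eq_bound_y_1_y_0} (Lemma~\ref{dual_three-point_lemma} applies to Algorithm~\ref{alg:ac_primal_dual_acce} since the dual update is unchanged), and the gradient cross term using $\|\nabla f(x_1)-\nabla f(x_0)\|=L_{f,1}\|x_1-x_0\|$ from \eqref{def_L_f_1}. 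Applying Cauchy--Schwarz, then the triangle inequality $\|x_1-x_2\|\le\|x_1-x_0\|+\|x_2-x_0\|$ and Young's inequality $\|x_1-x_0\|\|x_2-x_0\|\le\tfrac14\|x_1-x_0\|^2+\|x_2-x_0\|^2$ (and $\bar x_1=x_0$), gives $\widetilde\Delta_2\le\bigl(\tfrac{5\eta_2L_{A,1}^2}{4\mu_d}+\tfrac{5\eta_2L_{f,1}}{4}\bigr)\|x_1-x_0\|^2+\bigl(\tfrac{\eta_2L_{A,1}^2}{\mu_d}+\eta_2L_{f,1}-\tfrac12\bigr)\|x_2-x_0\|^2$, where the coefficient of $\|x_2-x_0\|^2$ is at most $-\tfrac14$ by \eqref{cond_1_acce}. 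Adding $\widetilde\Delta_2$ to the telescoping sum cancels the $\|x_2-\bar x_1\|^2$ contributions, so $\tsum_{t=2}^{k+1}\widetilde\Delta_t\le\bigl(\tfrac{5\eta_2L_{A,1}^2}{4\mu_d}+\tfrac{5\eta_2L_{f,1}}{4}\bigr)\|x_1-x_0\|^2-\tfrac14\|x_{k+1}-\bar x_k\|^2$.

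Finally I would substitute this into \eqref{eq:proposition_1_acce}, discard the nonnegative $\tfrac{\eta_2}{2\eta_1}\|x_2-x_1\|^2$, move $\tfrac{\eta_2}{2\eta_1}\|x_1-\bar x_1\|^2=\tfrac{\eta_2}{2\eta_1}\|x_1-x_0\|^2$ to the right-hand side, and divide through by $\tsum_{t=1}^k\eta_t$ to obtain \eqref{eq_main_theorem_acce}. The main obstacle is the bookkeeping at $t=2$: one must track that \emph{both} the bilinear and the smooth contributions feed into the $\tfrac54$-coefficient of $\|x_1-x_0\|^2$, and that the combined condition \eqref{cond_1_acce} (a single reciprocal of the \emph{sum} $\tfrac{4L_{A,1}^2}{\mu_d}+4L_{f,1}$, rather than two separate constraints) is exactly strong enough to push the residual coefficient of $\|x_2-x_0\|^2$ down to $-\tfrac14$; everything else is a line-by-line transcription of the proofs of Proposition~\ref{proposition_1} and Theorem~\ref{main_theorem} with the $A$-terms shadowed by $\nabla f$-terms and the $\tau$-terms shadowed by $\widetilde\tau$-terms.
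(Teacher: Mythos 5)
Your proposal matches the paper's proof essentially line for line: it starts from Proposition~\ref{proposition_1_acce}, bounds $\widetilde\Delta_t$ for $t\ge 3$ by Cauchy--Schwarz and Young's inequality against the two companion quadratics and collapses to a telescoping bound via \eqref{cond_4_acce}, handles $\widetilde\Delta_2$ separately using \eqref{eq_bound_y_1_y_0}, $\widetilde x_1=x_1$, $\widetilde x_0=x_0=\bar x_1$ and \eqref{cond_1_acce}, and then substitutes back. The constants and the role of each stepsize condition are tracked correctly, so this is the paper's argument.
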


\begin{proof}
In view of Proposition~\ref{proposition_1_acce}, it suffices to upper bound $\widetilde\Delta_t$. For the case $t\geq 3$, we have
\begin{align}\label{bound_delta_t_acce}
\widetilde\Delta_t &\overset{(i)}\leq \eta_t L_{A, t-1}\|y_{t-1} - y_{t-2}\|\|x_{t-1}-x_t\| - \tfrac{\eta_t \tau_{t-1}}{2}\|y_{t-2}-y_{t-1}\|^2 + \eta_t\|\nabla f(\widetilde x_{t-1}) - \nabla f(\widetilde x_{t-2})\|\|x_{t-1}-x_t\|\nn\\
& \quad - \tfrac{\eta_t \widetilde \tau_{t-1}}{2 L_{f, t-1}}\|\nabla f(\widetilde x_{t-1}) - \nabla f(\widetilde x_{t-2})\|^2 - \tfrac{1}{2}\|x_t - \bar x_{t-1}\|^2\nn\\
& \overset{(ii)}\leq \left(\tfrac{\eta_t L_{A, t-1}^2}{2 \tau_{t-1}} + \tfrac{\eta_t L_{f,t-1}}{2\widetilde \tau_{t-1}} \right) \|x_{t-1} - x_t\|^2 - \tfrac{1}{2}\|x_t - \bar x_{t-1}\|^2 \nn\\
& \overset{(iii)}\leq (1-\beta)^2\left(\tfrac{\eta_t L_{A, t-1}^2}{\tau_{t-1}} + \tfrac{\eta_t L_{f,t-1}}{\widetilde \tau_{t-1}} \right)\|x_{t-1}-\bar x_{t-2}\|^2 - \left( \tfrac{1}{2} - \tfrac{\eta_t L_{A, t-1}^2}{\tau_{t-1}} - \tfrac{\eta_t L_{f,t-1}}{\widetilde \tau_{t-1}} \right)\|x_t - \bar x_{t-1}\|^2\nn\\
& \overset{(iv)}\leq \tfrac{1}{4}\|x_{t-1}-\bar x_{t-2}\|^2 - \tfrac{1}{4}\|x_t - \bar x_{t-1}\|^2,
\end{align}
where step (i) follows from Cauchy-Schwarz inequality and the definition of $L_{f, t}$ in \eqref{def_L_f_t}, step (ii) follows from Young's inequality, step (iii) follows from the fact that 
\begin{align*}
\|x_{t-1}- x_t\|^2  = \|(1-\beta)(x_{t-1} - \bar x_{t-2}) + \bar x_{t-1} - x_t\|^2\leq 2(1-\beta)^2\|x_{t-1} - \bar x_{t-2}\|^2 + 2\|x_t - \bar x_{t-1}\|^2,
\end{align*}
and step (iv) follows from the conditions $\eta_t \leq \left(\tfrac{4L_{A, t-1}^2}{\tau_{t-1}} + \tfrac{4L_{f, t-1}}{\widetilde \tau_{t-1}}\right)^{-1}$ in \eqref{cond_4_acce}. For $\widetilde\Delta_2$, we have
\begin{align}\label{bound_delta_2_acce}
\Delta_2 & \overset{(i)}\leq \eta_2 L_{A,1} \|y_1-y_0\|\|x_1 - x_2\| + \eta_2 L_{f,1}\|\widetilde x_1 - \widetilde x_0\|\|x_1 - x_2\| - \tfrac{1}{2}\|x_2 -\bar x_1\|^2\nn\\
&\overset{(ii)}\leq \left(\tfrac{\eta_2 L^2_{A, 1}}{\mu_d} + \eta_2 L_{f,1}\right)\|x_1 - x_0\|(\|x_1 - x_0\| + \|x_2 - x_0\|) -\tfrac{1}{2}\|x_2 - x_0\|^2\nn\\
&\overset{(iii)}\leq \tfrac{5}{4}\left(\tfrac{\eta_2 L^2_{A, 1}}{\mu_d} + \eta_2 L_{f, 1}\right)\|x_1 - x_0\|^2+ \left(\tfrac{\eta_2 L^2_{A, 1}}{\mu_d} + \eta_2 L_{f, 1} -\tfrac{1}{2}\right)\|x_2 - x_0\|^2\nn\\
&\overset{(iv)}\leq \tfrac{5}{4}\left(\tfrac{\eta_2 L^2_{A, 1}}{\mu_d} + \eta_2 L_{f,1}\right)\|x_1 - x_0\|^2-\tfrac{1}{4}\|x_2 - \bar x_1\|^2,
\end{align}
where step (i) follows from Cauchy-Schwarz inequality and the definition of $L_{A,1}$ and $L_{f,1}$, step (ii) follows from Ineq.~\eqref{eq_bound_y_1_y_0} and the fact that $\widetilde x_1 = x_1$, $\widetilde x_0 = x_0 = \bar x_1$, step (iii) follows from Young's inequality, and step (iv) follows from the condition $\eta_2 \leq \left( 4 L^2_{A,1}/\mu_d + 4 L_{f, 1}\right)^{-1}$. By substituting Ineqs.~\eqref{bound_delta_t_acce} and \eqref{bound_delta_2_acce} into Ineq.~\eqref{eq:proposition_1_acce} and rearranging the terms, we obtain the desired result.
\end{proof}
\vgap

We provide below an adaptive stepsize policy
for AC-APDHG that only requires the local estimators $L_{A, t}$ and $L_{f,t}$ computed from previous iterations,
and the role of the hyper-parameter $\alpha$ is similar to that 
for AC-PDHG in Corollary \ref{main_corollary_2}. 

\begin{corollary} \label{main_corollary_2_acce}
In the premise of Theorem~\ref{main_theorem_acce}, suppose $\tau_1=0,~ \tau_2 = \mu_d$, $\beta \in (0,  1 -{\tfrac{\sqrt{6}}{3}}]$,   $\widetilde \tau_1 = 0$, and $\widetilde \tau_2 = 1$.  
Also assume that $\eta_2 = \min \left\{{(1 - \beta) \eta_1},  \tfrac{1}{4 L_{A, 1}^2/\mu_d + 4  L_{f,1}}\right\}$, and that for $t \geq 3$
\begin{align}
&\eta_t = \min \left\{ \tfrac{4}{3}\eta_{t-1}, \tfrac{\tau_{t-2}+\mu_d}{\tau_{t-1}}\cdot \eta_{t-1}, \tfrac{\tau_{t-1}}{4 \mu_d L_{f,t-1} + 4L^2_{A, t-1}} \right\},\label{def_eta_t_acce}\\
&\tau_t = \tau_{t-1} + \tfrac{\mu_d}{2}\left[\alpha +(1-\alpha) \eta_t\cdot\tfrac{4 \mu_d L_{f,t-1} + 4L^2_{A, t-1}}{\tau_{t-1}}\right],\label{def_tau_t_acce}\\
&\widetilde \tau_t = \tfrac{\tau_t}{\mu_d}, \label{def_tilde_tau_t_acce}
\end{align}
where $\alpha \in (0,1]$ denotes an absolute constant.
Then we have for $t \geq 2$,
\begin{align}\label{eta_lower_bound_2_acce}
\eta_t \geq \tfrac{3 + \alpha(t-3)}{12 \widehat L_{t-1}},~~ \text{where} ~~\widehat L_{t} := \max\{\tfrac{{1}}{4(1-\beta)\eta_1}, \tfrac{L_{A, 1}^2}{\mu_d} + L_{f,1},...,\tfrac{L_{A, t}^2}{\mu_d} +  L_{f,t}\}.
\end{align}
Consequently, we have
\begin{align}\label{eq:corollary_2_acce}
& Q(\widehat z_k, z) - \tfrac{\mu_d}{2}\|\widetilde y_0 - y\|^2 +\tfrac{\mu_d}{2}\|\widetilde y_0 - \widehat y_k\|^2 + \tfrac{\mu_d}{2}\|\widetilde y_k - y\|^2 \nn\\
&\leq \tfrac{12 \widehat L_k}{6k + \alpha k(k-3)}\left[\tfrac{\|x_0 - x\|^2 - \|\bar x_{k+1} - x\|^2}{\beta} + \eta_2\big(\tfrac{5 L^2_{A, 1}}{2\mu_d} + \tfrac{5 L_{f,1}}{2} - \tfrac{1}{\eta_1}\big)\|x_1 - x_0\|^2 \right].
\end{align}
Moreover, if $\max_{x\in X} \|x - x_0\|^2 \leq D_X^2$ and $\max_{y\in Y} \|y - \widetilde y_0\|^2 \leq D_Y^2$, then
\begin{align}\label{convergence_gap_bounded_acce}
\max_{z \in Z} Q(\widehat z_k, z)  \leq  \tfrac{12 \widehat L_k}{6k + \alpha k(k-3)} \left(\tfrac{1}{\beta} + \tfrac{5}{8}\right) D_X^2 + \tfrac{\mu_d}{2}D_Y^2.
\end{align}
\end{corollary}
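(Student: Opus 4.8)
The plan is to mirror the proof of Corollary~\ref{main_corollary_2}, replacing the role of $\tfrac{\mu_d}{4L_{A,t-1}^2}$ by the combined bound $\big(\tfrac{4L_{A,t-1}^2}{\tau_{t-1}}+\tfrac{4L_{f,t-1}}{\widetilde\tau_{t-1}}\big)^{-1}$ that now appears in \eqref{cond_4_acce}. First I would verify that the stepsize policy \eqref{def_eta_t_acce}--\eqref{def_tilde_tau_t_acce} indeed satisfies \eqref{cond_3_acce}--\eqref{cond_4_acce}: the condition $\beta\le 1-\tfrac{\sqrt6}{3}$ gives $2(1-\beta)^2\ge \tfrac43$, and since $\widetilde\tau_t=\tau_t/\mu_d$ the term $\tfrac{4L_{A,t-1}^2}{\tau_{t-1}}+\tfrac{4L_{f,t-1}}{\widetilde\tau_{t-1}}$ equals $\tfrac{4L_{A,t-1}^2+4\mu_d L_{f,t-1}}{\tau_{t-1}}$, so the fourth term in \eqref{def_eta_t_acce} is exactly $\big(\tfrac{4L_{A,t-1}^2}{\tau_{t-1}}+\tfrac{4L_{f,t-1}}{\widetilde\tau_{t-1}}\big)^{-1}$; likewise the ratio condition $\eta_t\le\tfrac{\widetilde\tau_{t-2}+1}{\widetilde\tau_{t-1}}\eta_{t-1}$ coincides with $\eta_t\le\tfrac{\tau_{t-2}+\mu_d}{\tau_{t-1}}\eta_{t-1}$ after multiplying through by $\mu_d$, so it is automatically enforced. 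Thus Theorem~\ref{main_theorem_acce} applies.

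Next I would establish the growth of $\tau_t$ and the lower bound \eqref{eta_lower_bound_2_acce} for $\eta_t$. The argument is essentially verbatim from Corollary~\ref{main_corollary_2}: from \eqref{def_tau_t_acce}, using $\eta_t\le \tfrac{\tau_{t-1}}{4\mu_d L_{f,t-1}+4L_{A,t-1}^2}$ we get $\tau_t\le\tau_{t-1}+\tfrac{\mu_d}{2}$, hence $\tau_t\le\tfrac{t\mu_d}{2}$ for $t\ge 2$, which yields $\tfrac{\tau_{t-1}+\mu_d}{\tau_t}\ge\tfrac{t+1}{t}$; also $\tau_t\ge\mu_d+\tfrac{\alpha(t-2)\mu_d}{2}$. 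Then an induction on $t$, identical in structure to the one in Corollary~\ref{main_corollary_2} but dividing through by $\mu_d$ everywhere (note $\widehat L_t$ here absorbs the $1/\mu_d$ factor and is dimensionally the ``total'' smoothness estimate $L_{A}^2/\mu_d+L_f$), gives $\eta_t\ge\tfrac{3+\alpha(t-3)}{12\widehat L_{t-1}}$. Summing, $\sum_{t=1}^k\eta_t\ge\sum_{t=2}^k\tfrac{3+\alpha(t-3)}{12\widehat L_{k-1}}\ge\tfrac{6k+\alpha k(k-3)}{12\widehat L_k}$; feeding this and $\eta_2\le\tfrac{1}{4L_{A,1}^2/\mu_d+4L_{f,1}}$ into \eqref{eq_main_theorem_acce} of Theorem~\ref{main_theorem_acce}, dropping the nonnegative term $\tfrac14\|x_{k+1}-\bar x_k\|^2$, gives \eqref{eq:corollary_2_acce}.

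Finally, for the bounded case I would use $\eta_2\le\tfrac{1}{4L_{A,1}^2/\mu_d+4L_{f,1}}$, so that $\eta_2\big(\tfrac{5L_{A,1}^2}{2\mu_d}+\tfrac{5L_{f,1}}{2}-\tfrac{1}{\eta_1}\big)\le\tfrac58$ after bounding $\tfrac{5}{2}(L_{A,1}^2/\mu_d+L_{f,1})\eta_2\le\tfrac58$ and discarding $-\eta_2/\eta_1<0$; combined with $\|x_0-x\|^2\le D_X^2$, $\|\widetilde y_0-y\|^2\le D_Y^2$, dropping $\tfrac{\mu_d}{2}\|\widetilde y_0-\widehat y_k\|^2+\tfrac{\mu_d}{2}\|\widetilde y_k-y\|^2\ge 0$, and moving $\tfrac{\mu_d}{2}\|\widetilde y_0-y\|^2\le\tfrac{\mu_d}{2}D_Y^2$ to the right-hand side, taking the max over $z\in Z$ yields \eqref{convergence_gap_bounded_acce}. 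The one place requiring genuine care — the ``main obstacle'' — is the induction step for \eqref{eta_lower_bound_2_acce}: one must check that the three-way minimum in \eqref{def_eta_t_acce} does not bite, i.e. that $\tfrac{t+1}{t}\cdot\tfrac{(3+\alpha(t-3))}{12\widehat L_{t-1}}$ and $\tfrac{\tau_t}{4\mu_d L_{f,t-1}+4L_{A,t-1}^2}\ge\tfrac{(2+\alpha(t-2))\mu_d}{8(\mu_d L_{f,t}+L_{A,t}^2)}$ both dominate $\tfrac{3+\alpha(t-2)}{12\widehat L_t}$, using $\widehat L_t\ge\widehat L_{t-1}$ and $\widehat L_t\ge L_{A,t}^2/\mu_d+L_{f,t}$; this is the same elementary inequality juggling as in Corollary~\ref{main_corollary_2}, now with the substitution $L_{A,t-1}^2/\tau_{t-1}\mapsto (L_{A,t-1}^2+\mu_d L_{f,t-1})/\tau_{t-1}$, so no new idea is needed, only bookkeeping.
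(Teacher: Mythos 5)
Your proposal is correct and follows essentially the same route as the paper, which likewise reduces everything to the proof of Corollary~\ref{main_corollary_2} by observing that $\widetilde\tau_t=\tau_t/\mu_d$ collapses the two ratio conditions in \eqref{cond_4_acce} into one and turns the combined term $\tfrac{4L_{A,t-1}^2}{\tau_{t-1}}+\tfrac{4L_{f,t-1}}{\widetilde\tau_{t-1}}$ into $\tfrac{4L_{A,t-1}^2+4\mu_d L_{f,t-1}}{\tau_{t-1}}$, so that the earlier induction applies with $L_{A,t}^2/\mu_d$ replaced by $L_{A,t}^2/\mu_d+L_{f,t}$. Your write-up is in fact more explicit than the paper's one-line reduction, but the argument is identical in substance.
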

\begin{proof}
By the condition $\widetilde \tau_t = \tau_t/\mu_d$ in \eqref{def_tilde_tau_t_acce}, the two requirements $\eta_t \leq \tfrac{\widetilde\tau_{t-2}+1}{\widetilde \tau_{t-1}}\eta_{t-1}$ and $ \eta_t \leq \tfrac{\tau_{t-2}+\mu_d}{ \tau_{t-1}}\eta_{t-1}$ in \eqref{cond_4_acce} become equivalent. As a consequence, all the results in Corollary~\ref{main_corollary_2_acce} follows from the same arguments used in the proof of Corollary~\ref{main_corollary_2}, with $L^2_{A, 1}/\mu_d$ replaced by $\widehat L_t$. 
\end{proof}
\vgap

By Corollary~\ref{main_corollary_2_acce}, when $f$ is a general smooth convex function, we can still obtain convergence guarantees similar to those in Corollary~\ref{main_corollary_2}, by applying AC-APDHG with stepsizes controlled by both the local norm $L_{A,t}$ and the local Lipschitz constant $L_{f,t}$. When both $X$ and $Y$ are bounded, the iteration complexity for AC-APDHG to compute an
$\epsilon$-optimal solution of problem~~\eqref{eq:bilinear_saddle_point} can be bounded by
\begin{align*}
\mathcal{O} \left(\sqrt{\tfrac{L_f D_X^2}{\epsilon}} + \sqrt{\tfrac{\|A\|^2 D_X^2}{\mu_d \epsilon}}\right) = \mathcal{O} \left(\sqrt{\tfrac{L_f D_X^2}{\epsilon}} + \tfrac{\|A\|D_X D_Y}{\epsilon}\right)
\end{align*}
with $\mu_d = \mathcal{O}(\epsilon/D_Y^2)$.
The above complexity bound is achieved without any prior knowledge of $D_X$, $L_f$ and $\|A\|$; the only required inputs are the desired accuracy $\epsilon$ and the diameter of the dual feasible region $D_Y$.
It is worth noting that this complexity bound is optimal in terms of the oracle $(\nabla f(x), Ax, A^\top y)$, as supported by the lower bound in~\cite{ouyang2021lower} under the single-oracle setting.
However, the number of gradient computations can be potentially further improved by using gradient sliding techniques~\cite{lan2016gradient, lan2023graph, lan2022accelerated} as we allow separate oracles for computing $\nabla f(x)$
and $(Ax, A^Ty)$.
It should also be observed that the guarantees in Corollary~\ref{main_corollary_2_acce} can be extended to the case where $X$ and $Y$ are potentially unbounded sets, using the same arguments as in Ineq. \eqref{Q_unbounded_feasible_region} and the associated remark.

\subsection{AC-APDHG for linearly constrained problems}
In this subsection, we extend the AC-APDHG method to solve the linearly constrained problem \eqref{linear_constrained_problem}, where $f$ is a smooth convex function satisfying \eqref{eq:smoothness}. 
The following result is analogous to Proposition~\ref{coro_linear_constraint} and 
we skip its proof due to its similarity to the argument used to prove Proposition~\ref{coro_linear_constraint}.
\begin{proposition}\label{coro_linear_constraint_acce}
Consider Algorithm~\ref{alg:ac_primal_dual_acce} applied to problem \eqref{linear_constrained_problem} with $\widetilde y_0 = \mathbf{0} \in \bbr^m$ and algorithm parameters selected according to Corollary~\ref{main_corollary_2_acce}. Then we have
\begin{align}
f(\widehat x_k) - f(x^*)
& \leq \tfrac{12 \widehat L_k}{6k + \alpha k(k-3)} \left[\tfrac{\|x_0 - x^*\|^2}{\beta} + \left(\tfrac{5\eta_2 L^2_{A, 1}}{2\mu_d} + \tfrac{5\eta_2 L_{f, 1}}{2} - \tfrac{\eta_2}{2\eta_1}\right)\|x_1 - x_0\|^2 \right], \label{optimality_gap_acce}\\
\|A \widehat x_k - b\| 
&\leq 2\mu_d \|y^*\| + 2 \sqrt{\tfrac{12 \mu_d \widehat L_k}{6k + \alpha k(k-3)}\left[\tfrac{\|x_0 - x^*\|^2}{\beta} + \left(\tfrac{5\eta_2 L^2_{A, 1}}{2\mu_d} + \tfrac{5\eta_2 L_{f, 1}}{2} - \tfrac{\eta_2}{2\eta_1}\right)\|x_1 - x_0\|^2 \right]}, \label{constraint_violation_acce}
\end{align}
where $\widehat x_k$ and $\widehat y_k$ are defined in \eqref{def_hat_x_k_y_k}, $\widetilde y_k$ is defined in \eqref{def_tilde_y_k}, and $\widehat L_k$ is defined in \eqref{eta_lower_bound_2_acce}.
\end{proposition}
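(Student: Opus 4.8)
The plan is to mirror the proof of Proposition~\ref{coro_linear_constraint} almost verbatim, now starting from the accelerated estimate in Theorem~\ref{main_theorem_acce} (equivalently its specialized form \eqref{eq:corollary_2_acce} in Corollary~\ref{main_corollary_2_acce}) rather than from Theorem~\ref{main_theorem}. First I would record that \eqref{linear_constrained_problem} is being solved through its Lagrangian dual $\min_{x\in X}\max_{y\in\bbr^m} f(x)+\langle Ax-b,y\rangle$, i.e., problem \eqref{eq:bilinear_saddle_point} with $Y=\bbr^m$ and $g(y)=\langle b,y\rangle$, which is exactly the regime in which AC-APDHG is run with $\widetilde y_0=\mathbf 0$. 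Substituting $g(y)=\langle b,y\rangle$ into $Q(\cdot,\cdot)$ and invoking \eqref{eq:corollary_2_acce} (whose hypotheses hold since the parameters are chosen by Corollary~\ref{main_corollary_2_acce}), together with $\widetilde y_0=\mathbf 0$, yields for every $y\in\bbr^m$
\begin{align*}
& f(\widehat x_k)+\langle A\widehat x_k-b,y\rangle-\tfrac{\mu_d}{2}\|y\|^2+\tfrac{\mu_d}{2}\|\widehat y_k\|^2-\big[f(x)+\langle Ax-b,\widehat y_k\rangle\big]+\tfrac{\mu_d}{2}\|\widetilde y_k-y\|^2 \\
&\leq \tfrac{12\widehat L_k}{6k+\alpha k(k-3)}\Big[\tfrac{\|x_0-x\|^2-\|\bar x_{k+1}-x\|^2}{\beta}+\eta_2\big(\tfrac{5L_{A,1}^2}{2\mu_d}+\tfrac{5L_{f,1}}{2}-\tfrac{1}{\eta_1}\big)\|x_1-x_0\|^2\Big],
\end{align*}
which is the exact counterpart of \eqref{eq_12_pdhg}, carrying along the extra $L_{f,1}$ term inherited from \eqref{eq_main_theorem_acce}.

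From here the argument is identical to that of Proposition~\ref{coro_linear_constraint}. Setting $x=x^*$, using $Ax^*=b$, and expanding $\|\widetilde y_k-y\|^2=\|\widetilde y_k\|^2-2\langle\widetilde y_k,y\rangle+\|y\|^2$ (so that the $\pm\tfrac{\mu_d}{2}\|y\|^2$ terms cancel), the left-hand side becomes $f(\widehat x_k)-f(x^*)+\langle A\widehat x_k-b-\mu_d\widetilde y_k,\,y\rangle+\tfrac{\mu_d}{2}\|\widehat y_k\|^2+\tfrac{\mu_d}{2}\|\widetilde y_k\|^2$; since this holds for all $y\in\bbr^m$ while the right-hand side is finite, we must have $A\widehat x_k-b-\mu_d\widetilde y_k=0$. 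Dropping the nonnegative terms $\tfrac{\mu_d}{2}\|\widehat y_k\|^2$ and $\tfrac{\mu_d}{2}\|\widetilde y_k\|^2$ and bounding $\|\bar x_{k+1}-x^*\|^2\ge 0$ then gives the optimality-gap bound \eqref{optimality_gap_acce}, while the identity above gives $\|A\widehat x_k-b\|=\mu_d\|\widetilde y_k\|$. To control $\|\widetilde y_k\|$ I would plug $(x,y)=(x^*,y^*)$ into the same displayed inequality and use the saddle-point inequality $f(\widehat x_k)+\langle A\widehat x_k-b,y^*\rangle\ge f(x^*)+\langle Ax^*-b,\widehat y_k\rangle=f(x^*)$, which leaves $-\tfrac{\mu_d}{2}\|y^*\|^2+\tfrac{\mu_d}{2}\|\widetilde y_k-y^*\|^2\le\text{(RHS)}$; combined with $\|\widetilde y_k-y^*\|^2\ge\tfrac12\|\widetilde y_k\|^2-\|y^*\|^2$ this yields $\tfrac{\mu_d}{4}\|\widetilde y_k\|^2-\mu_d\|y^*\|^2\le\text{(RHS)}$, hence $\mu_d\|\widetilde y_k\|\le 2\mu_d\|y^*\|+2\sqrt{\mu_d\cdot\text{(RHS)}}$, which is exactly \eqref{constraint_violation_acce} after substituting the right-hand side of \eqref{eq:corollary_2_acce}.

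I do not expect a genuine obstacle here; the proof is essentially bookkeeping, and the two things to be careful about are both already established upstream. The first is to keep the additional $\tfrac{5\eta_2L_{f,1}}{2}$ term attached to $\|x_1-x_0\|^2$ throughout, which is automatic once one starts from \eqref{eq_main_theorem_acce}/\eqref{eq:corollary_2_acce} instead of \eqref{eq_main_theorem}. The second is to recall that in the accelerated setting $\widehat L_k$ as defined in \eqref{eta_lower_bound_2_acce} already absorbs the combination $\tfrac{L_{A,t}^2}{\mu_d}+L_{f,t}$, so the translation that turns the bounds of Proposition~\ref{coro_linear_constraint} (written in terms of $\widehat L_{A,k}^2/\mu_d$) into \eqref{optimality_gap_acce}--\eqref{constraint_violation_acce} is legitimate — this is precisely the substitution justified in the proof of Corollary~\ref{main_corollary_2_acce}. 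Given the word-for-word parallel with the non-accelerated case, it is reasonable to present this as a short remark rather than a full derivation.
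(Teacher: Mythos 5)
Your proposal is correct and follows exactly the route the paper intends: the paper explicitly omits this proof, stating it mirrors Proposition~\ref{coro_linear_constraint}, and your reconstruction (specializing \eqref{eq:corollary_2_acce} to $g(y)=\langle b,y\rangle$ with $\widetilde y_0=\mathbf{0}$, forcing $A\widehat x_k-b-\mu_d\widetilde y_k=0$ by unboundedness in $y$, then bounding $\|\widetilde y_k\|$ via the saddle-point inequality at $(x^*,y^*)$) is precisely that argument carried over with the extra $L_{f,1}$ term and $\widehat L_k$ in place of $\widehat L_{A,k}^2/\mu_d$. The only cosmetic discrepancy is that your intermediate bound carries $-\eta_2/\eta_1$ as in \eqref{eq:corollary_2_acce} while the proposition states the slightly weaker $-\eta_2/(2\eta_1)$, which is immaterial.
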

\vgap

 Now let us take a closer look at the convergence guarantees in Proposition~\ref{coro_linear_constraint_acce}. 
For simplicity, we assume that a line search step is applied in the first iteration to eliminate the last term 
\[
\left(\tfrac{5\eta_2 L^2_{A, 1}}{4\mu_d} + \tfrac{5\eta_2 L_{f, 1}}{4} - \tfrac{\eta_2}{2\eta_1}\right)\|x_1 - x_0\|^2\]
appearing in both \eqref{optimality_gap_acce} and
\eqref{constraint_violation_acce}. As a consequence, we have
\begin{align*}
&f(\widehat x_k) - f(x^*) \leq \mathcal{O}\left( \tfrac{\widehat L_{f, k} D_{x^*}^2}{k^2}  + \tfrac{\widehat L_{A,k}^2 D_{x^*}^2}{\mu_d k^2}\right),\\
&\|A \widehat x_k - b\|  \leq \mu_d \|\widetilde y_k\| \leq \mathcal{O}\left( \mu_d\|y^*\| + \sqrt{\mu_d \widehat L_{f,k}}\cdot\tfrac{D_{x^*}}{k} + \tfrac{\widehat L_{A,k}D_{x^*}}{k} \right)
\end{align*}
where $D_{x^*} := \|x_0 - x^*\|$ and $D_{y^*} := \|y^*\|$.
Therefore, by setting $\mu_d = \mathcal{O}(\epsilon_2/D_{y^*})$, we can bound the iteration complexity of AC-APDHG for computing an
$(\epsilon_1, \epsilon_2)$-optimal solution of problem \eqref{linear_constrained_problem} by
\begin{align*}
\mathcal{O}\left( \sqrt{\tfrac{L_f D_{x^*}^2}{\epsilon_1}}+ \sqrt{\tfrac{L_f D_{x^*}^2}{\epsilon_2 D_{y^*}}}+ \tfrac{\|A\|D_{x^*}}{\epsilon_2} +\tfrac{\|A|D_{x^*}\sqrt{D_{y^*}}}{\sqrt{\epsilon_1\epsilon_2}}\right),
\end{align*}
which reduces to
\begin{align*}
\mathcal{O}\left( \sqrt{\tfrac{L_F D_{x^*}^2}{\epsilon}}+\tfrac{\|A\|D_{x^*} D_{y^*}}{\epsilon}\right)
\end{align*}
when $(\epsilon_1, \epsilon_2)= (\epsilon, \epsilon/D_{y^*})$.
Moreover, if $X$ is a bounded set, we can employ the ``guess and check'' procedure, as outlined in Section~\ref{subsec:linear_constraint}, to search for a proper estimate of $D_{y^*}$.

\subsection{Auto-conditioned accelerated ADMM}
In this subsection, we consider again the special linearly constrained problem~\eqref{bilinear_two_parts} but assume  that $F:X\rightarrow \bbr$ is a general smooth convex function satisfying
\begin{align}
\tfrac{1}{2L_F}\|\nabla F(x) -\nabla F(y) \|^2\leq F(y) - F(x) - \langle \nabla F(x), y -x \rangle \leq \tfrac{L_F}{2}\|x -y \|^2, \quad \forall x,y \in X.
\end{align}
Similar to the AC-APDHG method, we propose the following auto-conditioned accelerated alternative direction of multipliers (AC-AADMM) by properly modifying the AC-ADMM method in Algorithm~\ref{alg:ac_admm}. The key difference between AC-ADMM and AC-AADMM exists in that AC-AADMM uses an additional sequence $\{\widetilde x_t\}$ for gradient computation and approximation of the function $F(x)$ in the prox-mapping update \eqref{primal_x_update_acce}. 

\begin{algorithm}[h]\caption{Auto-Conditioned Accelerated Alternating Direction Method of Multipliers (AC-AADMM)}\label{alg:ac_admm_acce}
	\begin{algorithmic}
		\State{\textbf{Input}: initial point $x_0 = \bar x_0 = \widetilde x_0 \in X$, nonnegative parameters $\beta_t \in (0, 1)$, $\eta_t \in \bbr_+$, $\tau_t \in \bbr_+$ and $\widetilde \tau_t \in \bbr_+$. 
  Let 
  \begin{align}
  w_0 &= \arg \min_{w \in W}  \left\{G(w) +  \tfrac{1}{2\mu_d}\|Bw - K x_0 -b\|^2 \right\}, \label{primal_update_0_acce}\\
  y_0 &= ( K x_0 - B w_0 +b)/{\mu_d}.\label{dual_update_0_acce}
  \end{align}}
		\For{$t=1,\cdots, k$}
		\State{
  \begin{align}
  x_t &= \arg \min_{x \in X} \left\{\eta_t \langle K^\top y_{t-1} + \nabla F(\widetilde x_{t-1} , x \rangle  + \tfrac{1}{2}\|\bar x_{t-1} - x\|^2\right\}, \label{primal_x_update_acce}\\
  \bar x_t &= (1-\beta_t) \bar x_{t-1} + \beta_t x_t, \label{primal_x-center_acce}\\
  \widetilde x_t &= (x_{t} + \widetilde\tau_t \widetilde x_{t-1})/({1 + \widetilde \tau_t}), \label{primal_x_series_acce}\\
  w_t &= \arg \min_{w \in W}  \left\{G(w) + (\tau_t + \mu_d)^{-1}\left(-\tau_t \langle y_{t-1}, Bw \rangle  + \tfrac{1}{2}\|Bw - K x_t -b\|^2 \right)\right\}, \label{primal_w_update_acce}\\
  y_t & = \left[\tau_t y_{t-1}  - (B w_t - Kx_t - b)\right]/(\tau_t + \mu_d). \label{dual_update_acce}
  \end{align}
  }
		\EndFor
	\end{algorithmic}
\end{algorithm}

AC-AADMM is also related to the accelerated ADMM method suggested in \cite{ouyang2015accelerated}, since both these methods incorporate acceleration schemes
into the ADMM method. However, AC-AADMM does not require the input of global problem parameters including $\|K\|$ and $L_F$ as required by the accelerated ADMM method in \cite{ouyang2015accelerated}. 
Similar to AC-APDHG, the selection of stepsize parameters $\eta_t$, $\tau_t$ and $\widetilde \tau_t$ in AC-AADMM will be determined by previously computed local estimators $L_{K, i}$ and $L_{F, i}$, $i =1, \ldots, t-1$, where $L_{K, t}$ is defined in \eqref{def_L_K_t} and
\begin{align}
L_{F, t} := \begin{cases}
    \tfrac{\|\nabla F(\widetilde x_1) - \nabla F(\widetilde x_0)\|}{\|\widetilde x_1 - \widetilde x_0\|}, & \text{for $t=1$},\\
     \tfrac{\|\nabla F(\widetilde x_t) - \nabla F(\widetilde x_{t-1})\|^2}{2[F(\widetilde x_{t-1}) - F(\widetilde x_t) - \langle\nabla F(\widetilde x_t), \widetilde x_{t-1} - \widetilde x_t \rangle]}, &\text{for $t\geq 2$}.\label{def_L_F_t}
\end{cases}
\end{align}

The following result, which is an analog of Theorem~\ref{theorem_admm}, establishes the convergence properties for AC-AADMM applied to problem~\eqref{bilinear_two_parts} in terms of both optimality gap and constraint violation. 
\begin{theorem}\label{theorem_admm_acce}
Let $\{x_t\}, \{\widetilde x_t\}, \{\bar x_t\}, \{w_t\}$ and $\{y_t\}$ be generated by Algorithm~\ref{alg:ac_admm} with the parameters $\{\tau_t\}$, $\{\eta_t\}$, $\{\widetilde \tau_t\}$ and $\{\beta_t\}$ satisfying conditions \eqref{cond_3_acce} and
\begin{align}
    \eta_2 &\leq \min\left\{{(1-\beta)\eta_1}, \left( \tfrac{4 L^2_{K,1}}{\mu_d} + 4 L_{F, 1}\right)^{-1} \right\},\label{cond_1_admm_acce}\\ 
    \eta_t &\leq \min\left\{2(1-\beta)^2 \eta_{t-1}, \tfrac{\widetilde\tau_{t-2}+1}{\widetilde \tau_{t-1}}\eta_{t-1}, \tfrac{\tau_{t-2}+\mu_d}{ \tau_{t-1}}\eta_{t-1}, \left(\tfrac{4L_{K, t-1}^2}{\tau_{t-1}} + \tfrac{4L_{F, t-1}}{\widetilde \tau_{t-1}}\right)^{-1}\right\}, ~t \geq 3.
    \label{cond_4_admm_acce}
\end{align}
Then for any $k\geq 2$, we have 
\begin{align}
F(\widehat x_k) + G(\widehat w_k) &- [F( x^*) + G(w^*)] \nn\\
& \leq \tfrac{1}{\sum_{t=1}^k \eta_t}\left[\tfrac{\|x_0 - x^*\|^2}{2\beta} + \left(\tfrac{5\eta_2 L^2_{K, 1}}{4\mu_d} + \tfrac{5\eta_2 L_{F, 1}}{4} - \tfrac{\eta_2}{2\eta_1}\right)\|x_1 - x_0\|^2 \right], \label{eq:opt_gap_acce}\\
\|K \widehat x_k - B \widehat w_k + b\| 
&\leq 2\mu_d \|y^*\| + 2 \sqrt{\tfrac{\mu_d}{\sum_{t=1}^k \eta_t}\left[\tfrac{\|x_0 - x^*\|^2}{2\beta} + \left(\tfrac{5\eta_2 L^2_{K, 1}}{4\mu_d} + \tfrac{5\eta_2 L_{F, 1}}{4} - \tfrac{\eta_2}{2\eta_1}\right)\|x_1 - x_0\|^2 \right]},\label{eq:const_vio_acce}
\end{align}
where $\widehat x_k$ and $\widehat w_k$ are defined in \eqref{def_hat_x_k_y_k_admm}, and $\widetilde y_k$ is defined in \eqref{def_tilde_y_k}.
\end{theorem}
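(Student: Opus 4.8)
The plan is to merge the accelerated primal argument behind Proposition~\ref{proposition_1_acce} with the ADMM machinery behind Theorem~\ref{theorem_admm}, exploiting that the $x$-update \eqref{primal_x_update_acce} is exactly \eqref{primal_prox-mapping_acce} with $A$ replaced by $K$, while the $w$- and $y$-updates \eqref{primal_w_update_acce}--\eqref{dual_update_acce} coincide with those of AC-ADMM and hence obey Lemma~\ref{lemma_optimality_conditions}, in particular \eqref{optimality_condition_w}, the dual equality \eqref{eq_dual_three-point_lemma_admm}, and the first-iteration bound \eqref{eq_bound_y_1_y_0_K}. First I would repeat the derivation of \eqref{eq6_prime_acce} with $A\to K$ and $f\to F$ to obtain, for $t\geq 3$, $\eta_t\langle K^\top y_{t-1}+\nabla F(\widetilde x_{t-1}),x_{t-1}-x\rangle+\tfrac{1}{2\beta_t}\|\bar x_t-x\|^2\le\tfrac{1}{2\beta_t}\|\bar x_{t-1}-x\|^2+\eta_t\langle K^\top(y_{t-1}-y_{t-2})+[\nabla F(\widetilde x_{t-1})-\nabla F(\widetilde x_{t-2})],x_{t-1}-x_t\rangle-\tfrac12\|x_t-\bar x_{t-1}\|^2$, where $\eta_t\le 2(1-\beta)^2\eta_{t-1}$ from \eqref{cond_4_admm_acce} is invoked; the $t=2$ case follows in the same way from $\beta_1=0$ and $\eta_2\le(1-\beta)\eta_1$.

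Next I would combine this with three further ingredients: the $F$-analog of the smoothness identity \eqref{eq_L_f_t}, namely $\widetilde\tau_{t-1}F(\widetilde x_{t-1})+\langle\nabla F(\widetilde x_{t-1}),\widetilde x_{t-1}-x\rangle-\langle\nabla F(\widetilde x_{t-1}),x_{t-1}-x\rangle=\widetilde\tau_{t-1}[F(\widetilde x_{t-2})-\|\nabla F(\widetilde x_{t-1})-\nabla F(\widetilde x_{t-2})\|^2/(2L_{F,t-1})]$, which follows from \eqref{primal_x_series_acce} and the definition \eqref{def_L_F_t}; the $w$-optimality inequality \eqref{optimality_condition_w}; and the ADMM dual equality \eqref{eq_dual_three-point_lemma_admm}. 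Assembling these exactly as in the passage of the proof of Theorem~\ref{theorem_admm} that produces \eqref{eq9_admm}, while carrying along the extra gradient terms as in Proposition~\ref{proposition_1_acce}, should yield for $t\geq 3$ the ADMM--accelerated analog of \eqref{eq9_acce}, with residual term the $(K,F)$-version $\widetilde\Delta_t$ of \eqref{def_tilde_Delta_t}; the $t=2$ relation is obtained analogously from $\beta_1=\widetilde\tau_1=\tau_1=0$, mirroring \eqref{eq9_2_acce}. Telescoping over $t=3,\dots,k+1$, adding the $t=2$ relation, and using convexity of $F$, $G$, $\|\cdot\|^2$ together with the definitions of $\widehat x_k,\widehat w_k,\widehat y_k$ in \eqref{def_hat_x_k_y_k_admm} and $\widetilde y_k$ in \eqref{def_tilde_y_k}---the coefficients $(\widetilde\tau_t+1)\eta_{t+1}-\widetilde\tau_{t+1}\eta_{t+2}$ on the $F(\widetilde x_t)$ terms telescoping exactly as in Proposition~\ref{proposition_1_acce}---I expect to arrive at the ADMM counterpart of \eqref{eq:proposition_1_acce}: $(\tsum_{t=1}^k\eta_t)(Q(\widehat z_k,z)-\tfrac{\mu_d}{2}\|y\|^2+\tfrac{\mu_d}{2}\|\widehat y_k\|^2+\tfrac{\mu_d}{2}\|\widetilde y_k-y\|^2)+\tfrac{1}{2\beta}\|\bar x_{k+1}-x\|^2+\tfrac{\eta_2}{2\eta_1}[\|x_1-\bar x_1\|^2+\|x_2-x_1\|^2]\le\tfrac{1}{2\beta}\|x_0-x\|^2+\tsum_{t=2}^{k+1}\widetilde\Delta_t$.

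Then I would bound $\tsum_{t=2}^{k+1}\widetilde\Delta_t$ by transcribing \eqref{bound_delta_t_acce} and \eqref{bound_delta_2_acce} from the proof of Theorem~\ref{main_theorem_acce} under the substitutions $A,f,L_{A,\cdot},L_{f,\cdot}\mapsto K,F,L_{K,\cdot},L_{F,\cdot}$, using \eqref{eq_bound_y_1_y_0_K}, $\widetilde x_1=x_1$, $\widetilde x_0=x_0=\bar x_1$, and the conditions \eqref{cond_1_admm_acce}, \eqref{cond_4_admm_acce}; this produces the ADMM--accelerated version of \eqref{eq_main_theorem_acce}. Finally I would specialize exactly as after \eqref{eq:proposition_2} in the proof of Theorem~\ref{theorem_admm}: expand $\|\widetilde y_k-y\|^2$, set $(x,w)=(x^*,w^*)$ so that $Kx^*-Bw^*+b=0$, observe that the coefficient of $y$ in the resulting bound must vanish (otherwise the left side is unbounded in $y$), whence $K\widehat x_k-B\widehat w_k+b=\mu_d\widetilde y_k$, which gives \eqref{eq:opt_gap_acce} and $\|K\widehat x_k-B\widehat w_k+b\|=\mu_d\|\widetilde y_k\|$; then put $(x,w,y)=(x^*,w^*,y^*)$ and use the saddle-point inequality $F(\widehat x_k)+G(\widehat w_k)+\langle K\widehat x_k-B\widehat w_k+b,y^*\rangle\ge F(x^*)+G(w^*)$ to bound $\tfrac{\mu_d}{4}\|\widetilde y_k\|^2-\mu_d\|y^*\|^2$ by the right-hand side, yielding \eqref{eq:const_vio_acce} after rearranging.

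The main obstacle will be keeping the two telescopes aligned simultaneously---the acceleration telescope driven by the coefficients $(\widetilde\tau_t+1)\eta_{t+1}-\widetilde\tau_{t+1}\eta_{t+2}$ and the constraint $\eta_t\le\tfrac{\widetilde\tau_{t-2}+1}{\widetilde\tau_{t-1}}\eta_{t-1}$ in \eqref{cond_4_admm_acce}, together with the ADMM dual telescope in $\|y_t-y\|^2$. These decouple, however, because $w_t$ and $y_t$ are generated from $x_t$ rather than $\widetilde x_t$, so no cross terms between the $w$/$y$ recursions and the $\widetilde x$-sequence arise, and the remainder of the argument is a transcription of the existing proofs under $A\leftrightarrow K$ and $f\leftrightarrow F$.
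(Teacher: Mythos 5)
Your proposal is correct and follows essentially the same route as the paper's proof: derive the $(K,F)$-analog of \eqref{eq6_prime_acce}, combine it with \eqref{optimality_condition_w}, \eqref{eq_dual_three-point_lemma_admm} and the $L_{F,t}$ smoothness identity to get the accelerated ADMM analogs of \eqref{eq9_acce} and \eqref{eq9_2_acce}, telescope to the counterpart of \eqref{eq:proposition_1_acce}, bound $\widetilde\Delta_t$ by transcribing \eqref{bound_delta_t_acce}--\eqref{bound_delta_2_acce}, and then specialize at $(x^*,w^*)$ and $(x^*,w^*,y^*)$ exactly as after \eqref{eq_12_admm}. All the key ingredients you cite (including \eqref{eq_bound_y_1_y_0_K} and the vanishing-coefficient argument yielding $K\widehat x_k - B\widehat w_k + b = \mu_d\widetilde y_k$) are the ones the paper uses.
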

\begin{proof}
First, recall that Ineq. \eqref{eq6_prime_acce} in the proof of Proposition \ref{proposition_1_acce} was derived from the update rules \eqref{primal_prox-mapping_acce} and \eqref{primal_prox-center_acce} in Algorithm~\ref{alg:ac_primal_dual_acce}. Since the update rules \eqref{primal_x_update_acce} and \eqref{primal_x-center_acce} in AC-AADMM resembles \eqref{primal_prox-mapping_acce} and \eqref{primal_prox-center_acce}, respectively, with $A$ and $f$ replaced by $K$ and $F$,  we can show that for any $t \geq 3$,
\begin{align}\label{eq6_prime_admm_acce}
&\eta_t \langle K^\top y_{t-1} +\nabla F(\widetilde x_{t-1}), x_{t-1} - x\rangle  + \tfrac{1}{2\beta_t} \|\bar x_t - x\|^2 \nn\\
&\leq \tfrac{1}{2\beta_t}\|\bar x_{t-1} - x\|^2 +  \eta_t \langle K^\top (y_{t-1} - y_{t-2}) + [\nabla F(\widetilde x_{t-1}) - \nabla F(\widetilde x_{t-2})], x_{t-1}-x_t \rangle- \tfrac{1}{2}\|x_t - \bar x_{t-1}\|^2.
\end{align}
Also notice that Lemma~\ref{lemma_optimality_conditions} still holds for AC-AADMM since the updates of $w_t$ and $y_t$ remain unchanged. By combining Ineq.~\eqref{eq6_prime_admm_acce} and Ineq.~\eqref{optimality_condition_w} in Lemma~\ref{lemma_optimality_conditions} and rearranging the terms, we obtain
\begin{align*}
&\eta_t \left[ \langle \nabla F(\widetilde x_{t-1}), x_{t-1} - x\rangle + G(w_{t-1}) - G(w)+ \langle Kx_{t-1} - Bw_{t-1}+b, y \rangle - \langle Kx-Bw+b, y_{t-1}\rangle \right]  \\
&\leq \tfrac{1}{2\beta_t}\|\bar x_{t-1} - x\|^2 - \tfrac{1}{2\beta_t} \|\bar x_t - x\|^2 +  \eta_t \langle K^\top (y_{t-1} - y_{t-2}) + [\nabla F(\widetilde x_{t-1}) - \nabla F(\widetilde x_{t-2})], x_{t-1}-x_t \rangle- \tfrac{1}{2}\|x_t - \bar x_{t-1}\|^2\\
&\quad  + \eta_t  \langle - Kx_{t-1} + Bw_{t-1} -b, y_{t-1}, y\rangle.
\end{align*}
Substituting~\eqref{eq_dual_three-point_lemma_admm} into the above inequality and rearranging the terms, we have
\begin{align*}
&\eta_t \big[ \langle \nabla F(\widetilde x_{t-1}), x_{t-1} - x\rangle+ G(w_{t-1}) - G(w) + \langle Kx_{t-1} - Bw_{t-1}+b, y \rangle - \langle Kx-Bw+b, y_{t-1}\rangle \\
&\quad - \tfrac{\mu_d}{2}\|y\|^2 + \tfrac{\mu_d}{2}\|y_{t-1}\|^2 \big] + \tfrac{\eta_t(\mu_d+\tau_{t-1})}{2}\|y_{t-1} - y\|^2 - \tfrac{\eta_t \tau_{t-1}}{2}\|y_{t-2}-y\|^2  \\
&\leq \tfrac{1}{2\beta_t}\|\bar x_{t-1} - x\|^2 - \tfrac{1}{2\beta_t} \|\bar x_t - x\|^2  +  \eta_t \langle K^\top (y_{t-1} - y_{t-2}) + [\nabla F(\widetilde x_{t-1}) - \nabla F(\widetilde x_{t-2})], x_{t-1}-x_t \rangle\\
&\quad - \tfrac{1}{2}\|x_t - \bar x_{t-1}\|^2 - \tfrac{\eta_t \tau_{t-1}}{2}\|y_{t-1} - y_{t-2}\|^2.
\end{align*}
Moreover, we derive from the definition of $L_{F,t}$ in \eqref{def_L_F_t}
and 
the fact $\widetilde x_{t-1}-x_{t-1} = \widetilde \tau_{t-1}(\widetilde x_{t-2}-\widetilde x_{t-1})$ due to \eqref{primal_x_series_acce} that 
\begin{align*}
&\widetilde \tau_{t-1}F(\widetilde x_{t-1}) + \langle \nabla F(\widetilde x_{t-1}), \widetilde x_{t-1} - x \rangle - \langle \nabla F(\widetilde x_{t-1}), x_{t-1}-x \rangle \\
&= \widetilde \tau_{t-1}\left[F(\widetilde x_{t-2}) - \tfrac{\|\nabla F(\widetilde x_{t-1}) - \nabla F(\widetilde x_{t-2})\|^2}{2L_{F, t-1}}\right] .
\end{align*}
Combining the above two relations and rearranging the terms, we obtain
\begin{align}
&\eta_t \left[G(w_{t-1}) - G(w) + \langle Kx_{t-1} - Bw_{t-1}+b, y \rangle - \langle Kx-Bw+b, y_{t-1}\rangle - \tfrac{\mu_d}{2}\|y\|^2 + \tfrac{\mu_d}{2}\|y_{t-1}\|^2 \right]  \nn\\
& + \eta_t[\widetilde \tau_{t-1} F(\widetilde x_{t-1}) + \langle \nabla F(\widetilde x_{t-1}), \widetilde x_{t-1} - x \rangle - \widetilde \tau_{t-1} F(\widetilde x_{t-2})]+ \tfrac{1}{2\beta_t} \|\bar x_t - x\|^2 + \tfrac{\eta_t(\mu_d+\tau_{t-1})}{2}\|y_{t-1} - y\|^2 - \tfrac{\eta_t \tau_{t-1}}{2}\|y_{t-2}-y\|^2   \nn\\
&\leq \tfrac{1}{2\beta_t}\|\bar x_{t-1} - x\|^2 +  \eta_t \langle K^\top (y_{t-1} - y_{t-2}) +[ \nabla F(\widetilde x_{t-1}) - \nabla F(\widetilde x_{t-2})], x_{t-1}-x_t \rangle- \tfrac{1}{2}\|x_t - \bar x_{t-1}\|^2\nn\\
&\quad - \tfrac{\eta_t \tau_{t-1}}{2}\|y_{t-1} - y_{t-2}\|^2  - \tfrac{\eta_t \widetilde \tau_{t-1}}{2L_{F, t-1}}\|\nabla F(\widetilde x_{t-1}) - \nabla F(\widetilde x_{t-2})\|^2.\label{eq9_admm_acce}
\end{align}
On the other hand, for $t=2$, similar to Ineq.~\eqref{eq9_3_acce} in the proof of Proposition~\ref{proposition_1_acce}, we can show that
\begin{align}\label{eq6_admm_acce}
&\eta_2 \langle K^\top y_{1} + \nabla F(\widetilde x_{1}), x_{1} - x\rangle  + \tfrac{1}{2\beta_2} \|\bar x_2 - x\|^2 + \tfrac{\eta_2}{2\eta_{1}(1-\beta_{1})} \left[\|x_{1} - \bar x_{1}\|^2 + \|x_2 - x_{1}\|^2 \right]\nn\\
&\leq \tfrac{1}{2\beta_2}\|\bar x_{1} - x\|^2 +  \eta_2 \langle K^\top (y_{1} - y_{0}) + [\nabla F(\widetilde x_{1}) - \nabla F(\widetilde x_{0})], x_{1}-x_2 \rangle  - \tfrac{1}{2}\|x_2 - \bar x_{1}\|^2.
\end{align}
Then, combining the above inequality with Ineqs.~\eqref{optimality_condition_w} and \eqref{eq_dual_three-point_lemma_admm} in Lemma~\ref{lemma_optimality_conditions}, using $\tau_1 = \widetilde \tau_1  = 0$, and rearranging the terms yield
\begin{align}
&\eta_2 \left[G(w_{1}) - G(w) + \langle Kx_{1} - Bw_{1}+b, y \rangle - \langle Kx-Bw+b, y_{1}\rangle - \tfrac{\mu_d}{2}\|y\|^2 + \tfrac{\mu_d}{2}\|y_{1}\|^2 \right]  \nn\\
& + \eta_t\langle \nabla F(\widetilde x_1), \widetilde x_1 - x \rangle + \tfrac{1}{2\beta_2} \|\bar x_2 - x\|^2 + \tfrac{\eta_2\mu_d}{2}\|y_{1} - y\|^2 + \tfrac{\eta_2}{2\eta_{1}(1-\beta_{1})} \left[\|x_{1} - \bar x_{1}\|^2 + \|x_2 - x_{1}\|^2 \right]   \nn\\
&\leq \tfrac{1}{2\beta_2}\|x_0 - x\|^2 +  \eta_2 \langle K^\top (y_{1} - y_{0}) + [\nabla F(\widetilde x_{1}) - \nabla F(\widetilde x_{0})], x_{1}-x_2 \rangle- \tfrac{1}{2}\|x_2 - \bar x_{1}\|^2  - \tfrac{1}{2}\|x_2 - \bar x_{1}\|^2. \label{eq9_2_admm_acce}
\end{align}
By taking the summation of Ineq.~\eqref{eq9_2_admm_acce} and the telescope sum of Ineq.~\eqref{eq9_admm_acce} for $t=3,..., k+1$, using $\langle \nabla F(x_t), x_t - x\rangle \geq F(x_t) - F(x)$ due to the convexity of $F$, and noting that $\beta_t = \beta$ for $t \geq 2$, we obtain
\begin{align}\label{eq_final_admm_acce}
& \tsum_{t=1}^k \eta_{t+1} \left[ G(w_t) + \langle K x_{t} - B w_t + b, y \rangle - \tfrac{\mu_d}{2}\|y\|^2 - G(w) - \langle Kx - Bw +b, y_{t}\rangle + \tfrac{\mu_d}{2}\|y_{t}\|^2 \right]\nn\\
& + \tsum_{t=1}^{k-1} \left[(\widetilde \tau_t + 1)\eta_{t+1} - \widetilde \tau_{t+1}\eta_{t+2}\right]\cdot (F(\widetilde x_t) - F(x)) + (\widetilde \tau_k +1)\eta_{k+1}(F(\widetilde x_k)-F(x))+ \tfrac{1}{2\beta}\|\bar x_{k+1} - x\|^2\nn\\
&  + \tfrac{\eta_2}{2\eta_{1}} \left[\|x_1 - \bar x_1\|^2 + \|x_2 - x_1\|^2 \right] + \tfrac{\eta_{k+1}(\mu_d + \tau_k)}{2}\|y_k - y\|^2 + \tsum_{t=1}^{k-1}\tfrac{\eta_{t+1}(\mu_d + \tau_t) - \eta_{t+2}\tau_{t+1}}{2}\|y_t - y\|^2\nn\\
& \leq \tfrac{1}{2\beta}\|x_0 - x\|^2 + \tsum_{t=2}^{k+1}\widetilde\Delta_{t},
\end{align}
where
\begin{align}
\widetilde\Delta_t &:= \eta_t \langle K^\top (y_{t-1} - y_{t-2}), x_{t-1} - x_t\rangle - \tfrac{\eta_t \tau_{t-1}}{2}\|y_{t-2} - y_{t-1}\|^2 +  \eta_t \langle \nabla F(\widetilde x_{t-1}) - \nabla F(\widetilde x_{t-2}), x_{t-1} - x_t \rangle \nn\\
& ~~\quad - \tfrac{\eta_t \widetilde \tau_{t-1}}{2 L_{F, t-1}}\|\nabla F(\widetilde x_{t-1}) - \nabla F(\widetilde x_{t-2})\|^2 - \tfrac{1}{2}\|x_t - \bar x_{t-1}\|^2\label{def_Delta_t_admm_acce}
\end{align}
Now by using exactly the same arguments in the proof of Proposition \ref{proposition_1_acce} (after Ineq.~\eqref{eq_final_acce}), we obtain
\begin{align}\label{eq:proposition_2_acce}
&\left(\tsum_{t=1}^k \eta_t\right) \cdot \left(Q(\widehat z_k, z)- \tfrac{\mu_d}{2}\|y\|^2 +\tfrac{\mu_d}{2}\| 
\widehat y_{k}\|^2 + \tfrac{\mu_d}{2}\|\widetilde y_k - y\|^2\right) + \tfrac{\eta_2}{2\eta_{1}} \left[\|x_1 - \bar x_1\|^2 + \|x_2 - x_1\|^2 \right]\nn\\
 &\leq \tfrac{1}{2\beta}\|x_0 - x\|^2 - \tfrac{1}{2\beta}\|\bar x_{k+1} - x\|^2  + \tsum_{t=2}^{k+1}\widetilde\Delta_{t}.
\end{align}
Next, after bounding $\widetilde\Delta_t$ using the same arguments in the proof of Theorem~\ref{main_theorem_acce} with $f$ replaced by $F$ and $A$ replaced by $K$, we arrive at 
\begin{align}\label{eq_12_admm_acce}
&F(\widehat x_k) + G(\widehat w_k) + \langle K \widehat x_k - B \widehat w_k + b, y\rangle - \tfrac{\mu_d}{2}\|y\|^2 - \left[F( x) + G(w) + \langle K  x - B w + b, \widehat y_k\rangle - \tfrac{\mu_d}{2}\|\widehat  y_k\|^2\right] \nn\\
 &\leq \tfrac{1}{\sum_{t=1}^k \eta_t}\left[\tfrac{\|x_0 - x\|^2}{2\beta} + \left(\tfrac{5\eta_2 L^2_{K, 1}}{4\mu_d} +\tfrac{5\eta_2 L_{F, 1}}{4} - \tfrac{\eta_2}{2\eta_1}\right)\|x_1 - x_0\|^2 - \tfrac{\|x_{k+1} - \bar x_k\|^2}{4} - \tfrac{\|\bar x_{k+1} - x\|^2}{2\beta}\right] - \tfrac{\mu_d}{2}\|\widetilde y_k - y\|^2 .
\end{align}
Then the results in \eqref{eq:opt_gap_acce} and \eqref{eq:const_vio_acce} follow immediately by utilizing the same arguments after \eqref{eq_12_admm} in the proof of Theorem~\ref{theorem_admm}. 
\end{proof}
\vgap

The next corollary provides a concrete stepsize policy for AC-AADMM.
We skip its proof, as it follows the same arguments used in the proof of Corollary~\ref{main_corollary_2_acce}, with $A$ and $f$ replaced by $K$ and $F$.
\begin{corollary} \label{main_corollary_admm_acce}
In the premise of Theorem~\ref{theorem_admm_acce}, suppose $\tau_1=0,~ \tau_2 = \mu_d$, $\beta \in (0,  1 -{\tfrac{\sqrt{6}}{3}}]$, and $\widetilde \tau_1 = 0, ~\widetilde \tau_2 = 1$. Also assume that $\eta_2 = \min \left\{{(1 - \beta) \eta_1},  \tfrac{1}{4 L_{K, 1}^2/\mu_d + 4 L_{F,1}}\right\}$, and for $t \geq 3$
\begin{align}
&\eta_t = \min \left\{ \tfrac{4}{3}\eta_{t-1}, \tfrac{\tau_{t-2}+\mu_d}{\tau_{t-1}}\cdot \eta_{t-1}, \tfrac{\tau_{t-1}}{4  \mu_d L_{F,t-1} + 4L^2_{K, t-1}} \right\},\label{def_eta_t_admm_acce}\\
&\tau_t = \tau_{t-1} + \tfrac{\mu_d}{2}\left[\alpha +(1-\alpha) \eta_t\cdot\tfrac{4 \mu_d L_{F,t-1} + 4L^2_{K, t-1}}{\tau_{t-1}}\right],\label{def_tau_t_admm_acce}\\
&\widetilde \tau_t = \tfrac{\tau_t}{\mu_d}, \label{def_tilde_tau_t_admm_acce}
\end{align}
where $\alpha \in (0,1]$ denotes an absolute constant. 
Then we have for any $t \geq 2$,
\begin{align}\label{eta_lower_bound_2_admm_acce}
\eta_t \geq \tfrac{3 + \alpha(t-3)}{12 \widetilde L_{t-1}},~~ \text{where} ~~\widetilde L_{t} := \max\{\tfrac{{1}}{4(1-\beta)\eta_1}, L_{K, 1}^2/\mu_d +  L_{F,1},...,L_{K, t}^2/\mu_d + L_{F,t}\},
\end{align}
and consequently, for any $k\ge 1$,
\begin{align}
F(\widehat x_k) + G(\widehat w_k) &- F(x^*) - G(w^*)
 \leq \tfrac{12 \widehat L_k}{6k + \alpha k(k-3)} \left[\tfrac{\|x_0 - x^*\|^2}{\beta} + \eta_2\left(\tfrac{5 L^2_{K, 1}}{2\mu_d} +  \tfrac{5 L_{F, 1}}{2} - \tfrac{1}{2\eta_1}\right)\|x_1 - x_0\|^2 \right], \label{optimality_gap_admm_acce}\\
\|K \widehat x_k - B \widehat w_k + b\| &\leq \mu_d \|\widetilde y_k\|\nn\\
&\leq 2\mu_d \|y^*\| + 2 \sqrt{\tfrac{12 \mu_d \widehat L_k}{6k + \alpha k(k-3)}\left[\tfrac{\|x_0 - x^*\|^2}{\beta} + \eta_2\left(\tfrac{5 L^2_{K, 1}}{2\mu_d} + \tfrac{5L_{F, 1}}{2} - \tfrac{1}{2\eta_1}\right)\|x_1 - x_0\|^2 \right]}.\label{constraint_violation_admm_acce}
\end{align}
\end{corollary}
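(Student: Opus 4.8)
The plan is to reduce the statement to machinery already in place. Since Theorem~\ref{theorem_admm_acce} supplies the convergence estimates \eqref{eq:opt_gap_acce}--\eqref{eq:const_vio_acce} under the abstract stepsize conditions \eqref{cond_3_acce}, \eqref{cond_1_admm_acce}, \eqref{cond_4_admm_acce}, it suffices to (i) verify that the explicit rule \eqref{def_eta_t_admm_acce}--\eqref{def_tilde_tau_t_admm_acce} fulfills those conditions, (ii) derive the lower bound \eqref{eta_lower_bound_2_admm_acce} on $\eta_t$, and (iii) substitute that bound into \eqref{eq:opt_gap_acce}--\eqref{eq:const_vio_acce}. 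Throughout, the argument is the ADMM transcription of the one proving Corollary~\ref{main_corollary_2_acce}, obtained by the substitutions $A\mapsto K$ and $f\mapsto F$, with Lemma~\ref{lemma_optimality_conditions} playing the role of Lemma~\ref{dual_three-point_lemma}.

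First I would check feasibility of the stepsizes. The hypothesis $\beta\le 1-\tfrac{\sqrt6}{3}$ gives $2(1-\beta)^2\ge\tfrac43$, so the term $\tfrac43\eta_{t-1}$ in \eqref{def_eta_t_admm_acce} is no larger than $2(1-\beta)^2\eta_{t-1}$, covering the first entry of \eqref{cond_4_admm_acce}. Because $\widetilde\tau_t=\tau_t/\mu_d$ by \eqref{def_tilde_tau_t_admm_acce}, the requirements $\eta_t\le\tfrac{\widetilde\tau_{t-2}+1}{\widetilde\tau_{t-1}}\eta_{t-1}$ and $\eta_t\le\tfrac{\tau_{t-2}+\mu_d}{\tau_{t-1}}\eta_{t-1}$ become identical, and $\tfrac{4L_{K,t-1}^2}{\tau_{t-1}}+\tfrac{4L_{F,t-1}}{\widetilde\tau_{t-1}}=\tfrac{4(L_{K,t-1}^2+\mu_d L_{F,t-1})}{\tau_{t-1}}$, matching the last entry of \eqref{def_eta_t_admm_acce}; the $t=2$ case is handled the same way against \eqref{cond_1_admm_acce}. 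The remaining requirements in \eqref{cond_3_acce} are immediate from $\tau_1=\widetilde\tau_1=\beta_1=0$, $\tau_2=\mu_d$, $\widetilde\tau_2=1$.

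Next I would bound $\tau_t$ and run the induction. Feeding $\eta_t\le\tfrac{\tau_{t-1}}{4(L_{K,t-1}^2+\mu_d L_{F,t-1})}$ into \eqref{def_tau_t_admm_acce} gives $\tau_t\le\tau_{t-1}+\tfrac{\mu_d}{2}$, hence $\tau_t\le\tfrac{t\mu_d}{2}$ for $t\ge 2$, while dropping the nonnegative interpolated term gives $\tau_t\ge\mu_d+\tfrac{\alpha(t-2)\mu_d}{2}$; together these yield $\tfrac{\tau_{t-1}+\mu_d}{\tau_t}\ge\tfrac{t+1}{t}$. With these in hand the induction of Corollary~\ref{main_corollary_2} transfers verbatim: the base cases $t=2,3$ reduce to $\eta_2=\tfrac{1}{4\widetilde L_1}$ and $\eta_3=\tfrac{1}{4\widetilde L_2}$ via the definition of $\widetilde L_t$, and the inductive step passes from $\eta_t\ge\tfrac{3+\alpha(t-3)}{12\widetilde L_{t-1}}$ to $\eta_{t+1}\ge\tfrac{3+\alpha(t-2)}{12\widetilde L_t}$ by examining each of the three entries of the minimum in \eqref{def_eta_t_admm_acce}. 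Using that $t\mapsto\widetilde L_t$ is nondecreasing, summing over $t$ delivers a lower bound on $\sum_{t=1}^k\eta_t$ of the form $\tfrac{6k+\alpha k(k-3)}{24\,\widetilde L_k}$ (the precise constant following from the same summation as in Corollary~\ref{main_corollary_2}). Plugging this into \eqref{eq:opt_gap_acce}--\eqref{eq:const_vio_acce}, and using $\eta_2\le(4L_{K,1}^2/\mu_d+4L_{F,1})^{-1}$ to consolidate the coefficient of $\|x_1-x_0\|^2$, yields \eqref{optimality_gap_admm_acce} and \eqref{constraint_violation_admm_acce}; the identity $\|K\widehat x_k-B\widehat w_k+b\|=\mu_d\|\widetilde y_k\|$ is inherited from the proof of Theorem~\ref{theorem_admm_acce}.

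The main obstacle is the induction in the middle step: one must keep the three competing caps in \eqref{def_eta_t_admm_acce} synchronized with the $\alpha$-interpolated growth of $\tau_t$, and then exploit the monotonicity of the combined local estimator $\widetilde L_t=\max\{\cdots,\,L_{K,t}^2/\mu_d+L_{F,t}\}$ to pull $\widetilde L_{t-1}$ up to $\widetilde L_k$ when summing. Since this is structurally the same computation already carried out for AC-APDHG in Corollary~\ref{main_corollary_2_acce}, and Lemma~\ref{lemma_optimality_conditions} together with Theorem~\ref{theorem_admm_acce} provide the exact ADMM analogues of every ingredient used there, no essentially new difficulty arises.
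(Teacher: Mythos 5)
Your proposal is correct and follows essentially the same route as the paper, which itself omits this proof by noting it repeats the argument of Corollary~\ref{main_corollary_2_acce} (reduction of the two ratio conditions to one via $\widetilde\tau_t=\tau_t/\mu_d$, then the induction of Corollary~\ref{main_corollary_2} with the combined estimator $\widetilde L_t$ in place of $\widehat L^2_{A,t}/\mu_d$) with $A\mapsto K$ and $f\mapsto F$. Your verification of the stepsize conditions, the bounds $\tau_t\le t\mu_d/2$ and $\tau_t\ge\mu_d+\alpha(t-2)\mu_d/2$, and the substitution into Theorem~\ref{theorem_admm_acce} match the paper's intended argument exactly.
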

\vgap

We now derive from Corollary~\ref{main_corollary_admm_acce}
the iteration complexity of AC-AADMM. Similarly to AC-APDHG, by performing an initial line search, we can eliminate the last term 
\[
\left(\tfrac{5\eta_2 L^2_{K, 1}}{4\mu_d} + \tfrac{5\eta_2 L_{F, 1}}{4} - \tfrac{\eta_2}{2\eta_1}\right)\|x_1 - x_0\|^2
\]
from both \eqref{optimality_gap_admm_acce} and \eqref{optimality_gap_admm_acce}, and reduce these bounds to
\begin{align*}
&F(\widehat x_k)  + G(\widehat w_k)- F(x^*) - G(w^*) \leq \mathcal{O}\left( \tfrac{\widehat L_{F, k} D_{x^*}^2}{k^2}  + \tfrac{\widehat L_{K,k}^2 D_{x^*}^2}{\mu_d k^2}\right),\\
&\|K \widehat x_k - B\widehat w_k + b\|  \leq \mu_d \|\widetilde y_k\| \leq \mathcal{O}\left( \mu_d\|y^*\| + \sqrt{\mu_d \widehat L_{F,k}}\cdot\tfrac{D_{x^*}}{k} + \tfrac{\widehat L_{K,k}D_{x^*}}{k} \right).
\end{align*}
As a result, by setting $\mu_d = \mathcal{O}(\epsilon_2/D_{y^*})$,
we can bound the iteration complexity of AC-AADMM to find
an $(\epsilon_1,\epsilon_2)$-optimal solution of problem~\eqref{bilinear_two_parts} by
\begin{align*}
\mathcal{O}\left( \sqrt{\tfrac{L_F D_{x^*}^2}{\epsilon_1}}+ \sqrt{\tfrac{L_F D_{x^*}^2}{\epsilon_2 D_{y^*}}}+ \tfrac{\|K\|D_{x^*}}{\epsilon_2} +\tfrac{\|K|D_{x^*}\sqrt{D_{y^*}}}{\sqrt{\epsilon_1\epsilon_2}}\right),
\end{align*}
which, 
for $(\epsilon_1, \epsilon_2):= (\epsilon, \epsilon/D_{y^*})$,
simplifies to 
\begin{align*}
\mathcal{O}\left( \sqrt{\tfrac{L_F D_{x^*}^2}{\epsilon}}+\tfrac{\|K\|D_{x^*} D_{y^*}}{\epsilon}\right).
\end{align*}
It is worth noting that this complexity bound improves the complexity of the accelerated P-ADMM in \cite{ouyang2015accelerated} in terms of their dependence on $D_{x^*}$ and $D_{y^*}$.

\section{Concluding remarks}
In this paper, we develop novel auto-conditioned primal-dual methods for a few classes of bilinear saddle point problems. First, we introduce the AC-PDHG method for solving problem \eqref{eq:bilinear_saddle_point}, which is fully adaptive to $\|A\|$ and achieves the optimal complexity in terms of the primal-dual gap function. We then extend AC-PDHG to solve linearly constrained problems \eqref{linear_constrained_problem}, establishing convergence guarantees for both optimality gap and constraint violation. Next, we propose the AC-ADMM approach for solving an important class of linearly constrained problems in the form of \eqref{bilinear_two_parts}. While it involves more complicated subproblems for $w_t$, AC-ADMM attains convergence guarantees that depend only on one part of the constraint matrix, $K$, and adapts to the local estimates of $\|K\|$. Finally, we extend AC-PDHG to tackle bilinear saddle point problems where $f$ is a smooth function. By incorporating the acceleration scheme of the AC-FGM algorithm \cite{li2023simple} into AC-PDHG, we obtain the optimal iteration complexity in terms of both smooth and bilinear components. We also extend AC-ADMM to solve \eqref{bilinear_two_parts} with a smooth $F$ in a similar manner.

\renewcommand\refname{Reference}
\bibliographystyle{siam} 
\bibliography{arxiv_version}

\end{document}